\newtheorem{assumption}[theorem]{Assumption}
\DeclareMathOperator{\diag}{diag}
\DeclareMathOperator{\nullspace}{Ker}
\DeclareMathOperator{\rangespace}{range}
\DeclareMathOperator{\inertia}{inertia}
\newcommand{\ldlt}{$\mathrm{LDL^T}$\xspace}
\newcommand{\lblt}{$\mathrm{LBL^T}$\xspace}
\newcommand{\llt}{\text{Cholesky}\xspace}
\newcommand{\lu}{$\mathrm{LU}$\xspace}
\newcommand{\CG}{\textsc{Cg}\xspace}
\newcommand{\cond}{\kappa_2}
\newcommand{\epstol}{\mathbf{u}}
\newcommand{\cactive}{\mathcal{B}}
\newcommand{\cinactive}{\mathcal{N}}
\newcommand{\add}[1]{#1}
\title{
 Condensed Interior-Point Methods for Scalable Nonlinear Programming on GPUs
}
\author{François Pacaud \and
Sungho Shin \and
Alexis Montoison \and
Michel Schanen \and
Mihai Anitescu
}
\date{\today}
\begin{document}
\maketitle

\begin{abstract}
This paper explores two variants of condensed-space interior-point methods designed for GPUs—HyKKT and LiftedKKT—by analyzing their numerical properties through error analysis and assessing their real-world performance via extensive numerical experiments with a fully GPU-resident software implementation. Traditional implementations of interior-point methods (IPMs) involve solving indefinite augmented KKT systems repeatedly by utilizing direct sparse solvers based on the LBL$^\top$ factorization with sophisticated numerical pivoting strategies. While this method achieves high performance and robustness on CPUs, the serial nature of numerical pivoting presents challenges for effective implementation on GPUs. Recently, multiple condensed-space IPM strategies have emerged to address this issue by transforming the KKT system into a symmetric positive-definite matrix, which is more suitable for factorization on GPUs. However, these methods have been demonstrated only on optimal power flow instances, and their numerical properties are currently inadequately understood. In this study, we demonstrate that although the condensed systems show increased ill-conditioning, the inherent structures of the condensed KKT system effectively counterbalance potential accuracy loss in the IPM. Furthermore, we provide numerical results that thoroughly assess the capabilities of a fully GPU-resident nonlinear programming software stack, comprising MadNLP (a filter line-search IPM solver), cuDSS (a direct sparse solver leveraging Cholesky factorization), and ExaModels (a modeling framework), by benchmarking their performance against the pglib-opf and CUTEst libraries. Our findings suggest that the GPU framework holds promise for solving highly sparse large-scale nonlinear programs, such as optimal power flow instances, except for diminished robustness and limited speedups for edge cases observed within CUTEst instances.
\end{abstract}


\section{Introduction}
General-purpose GPU computing has emerged as a fundamental component of scientific computing and machine learning, particularly in its ability to train and deploy large-scale artificial intelligence (AI) models. As a computing platform, modern GPU hardware offers two major advantages: (1) substantial parallel computing power and (2) improved power efficiency.

Despite these achievements in scientific computing and machine learning, GPUs have been underutilized in classical mathematical programming, including linear programming (LP) and nonlinear programming (NLP). In machine learning applications, parallelization is mostly utilized at the level of low-level matrix operations via libraries such as NVIDIA's {\tt cuBLAS} or AMD's {\tt rocBLAS}. In contrast, most classical mathematical programming applications exhibit limited parallelism at the BLAS operation level due to the inherent sparsity of the underlying problem instances, which prevents straightforward utilization of parallel resources. Although certain categories of sparse matrix operations can be parallelized, the development of efficient sparse matrix kernels on GPUs poses significant challenges, particularly for sparse matrix factorizations. Classical solution methodologies typically depend on direct linear solvers that employ sophisticated numerical pivoting strategies to maintain numerical stability. However, these techniques are inherently sequential, creating difficulties for effective parallelization on GPUs. As a result, until recently, the full potential of GPU computing remained unrealized within the mathematical programming domain.

Recent advancements in GPU computing techniques are beginning to change this landscape. Several significant developments are:

\begin{enumerate}
\item \textbf{Development of mature direct sparse linear solvers}: Recent developments in GPU-based sparse direct solvers, such as NVIDIA's {\tt cuDSS}~\cite{nvidiaNVIDIACuDSSPreview}, have significantly improved the performance of the direct, repetitive solution of sparse linear systems. These solvers now offer performance that rivals or exceeds that of traditional CPU-based solvers, although the types of factorizations currently supported are limited to Cholesky, \ldlt, and LU~\cite{nvidiaNVIDIACuDSSPreview,shin2023accelerating}.
\item \textbf{Progress in automatic differentiation}: GPUs are well-suited for automatic differentiation, a key component of both scientific simulations and machine learning~\cite{jax2018github,enzyme2021}. Efficient implementations for algebraic modeling and derivative evaluation on GPUs have now become available. For large-scale problem instances with highly repetitive patterns, derivative evaluation can be performed at speeds that are orders of magnitude faster than their CPU counterparts~\cite{shin2023accelerating}. Notably, derivative evaluation kernels can be created in a fully automated fashion. This removes the barrier of having to write custom derivative evaluation kernels for each problem instance, which was previously a major bottleneck.
\item \textbf{Rise of GPU-dominated HPC}: With the emergence of exascale supercomputers (e.g., Frontier, Aurora), GPU support has become a core requirement for utilizing these HPC resources in mathematical optimization applications.
\item \textbf{Growing interest in batched optimization}:
  In machine learning, embedding optimization problems into neural networks requires solving many small problems in parallel. This has led to the development of GPU-native batched solvers capable of handling thousands of small problems simultaneously~\cite{amos2017optnet,pineda2022theseus}. Similar ideas have been applied to decomposition methods in large-scale optimization~\cite{kimLeveragingGPUBatching2021}, but extending these methods to large-scale problems remains difficult, as current implementations often rely on dense linear algebra.
\end{enumerate}

Due to this renewed interest in GPU-based optimization, there has been a surge of research in this area, particularly in developing scalable algorithms and software implementations that can harness the power of GPUs. Here, we provide a general overview of the state of the art in GPU-based optimization, highlighting the challenges and opportunities. Recent developments in GPU-based optimization can be broadly categorized into three main approaches: (i) first-order methods, (ii) second-order methods with iterative linear algebra, and (iii) second-order methods with sparse direct solvers. In terms of numerical implementation, the first two approaches rely mostly on repetitive sparse matrix-vector multiplications, while the third approach requires sparse direct solvers.

\paragraph{First-order methods for large-scale optimization.}
Although first-order methods have been extensively used for large-scale optimization, especially in machine learning applications, their application to classical mathematical programming remains limited due to their slow linear convergence rate.
However, for extremely large-scale instances of linear programs, where the numerical factorization of the KKT systems or normal equations is infeasible, first-order methods can be a promising alternative.
Recently, a special type of primal-dual first-order method, known as the primal-dual hybrid gradient (PDHG) method, has been developed to target such extremely large problems.
This method is particularly effective for large-scale linear programs, as it can handle problems with billions of variables and constraints.
This algorithm is implemented within a solver called PDLP, which is adapted to GPU architectures~\cite{lu2023cupdlp,lu2023cupdlp2}.
Recently, this algorithm has been enhanced with the restarted Halpern PDHG method with reflection and a PID-controlled primal weight update strategy, showing improved speed~\cite{luCuPDLPFurtherEnhanced2025}.
Although these methods have faced challenges in obtaining high-precision solutions, they have been shown to outperform even commercial solvers such as Gurobi on extremely large-scale instances of linear programs when fully GPU-accelerated~\cite{lu2023cupdlp,lu2023cupdlp2}.

\paragraph{Second-order methods with iterative linear algebra.}
An alternative to first-order methods is to use second-order methods with iterative linear algebra, such as Krylov subspace methods.
Krylov subspace methods require only matrix-vector products and are inherently parallelizable.
These methods are well-suited for large-scale optimization problems, as they can efficiently solve the KKT systems arising in interior-point methods without requiring full matrix factorizations.
However, most second-order algorithms, such as the primal-dual interior-point method, rely on barrier functions to handle inequality constraints, which can lead to ill-conditioned KKT systems; thus, the iterative linear solvers need to be employed with careful KKT system reformulation and/or preconditioning strategies \cite{curtisNoteImplementationInteriorpoint2012,rodriguezScalablePreconditioningBlockstructured2020}.
Promising results have been reported in convex settings~\cite{schubigerGPUAccelerationADMM2020} by combining operator splitting methods and the preconditioned conjugate gradient.
For nonlinear programming, a strategy based on the Augmented Lagrangian method, interior-point method, and preconditioned conjugate gradient method has been proposed~\cite{cao2016augmented}.

\paragraph{Second-order methods for large-scale optimization.}
The previous two strategies are well suited to large-scale convex optimization, but extending their success to general nonlinear programming remains challenging.
In nonconvex settings, first-order or iterative linear algebra-based implementations often struggle to achieve high accuracy.
For instance, it has been previously reported that ADMM fails to converge reliably below a tolerance of $10^{-3}$ on the AC optimal power flow (OPF) problem~\cite{kimLeveragingGPUBatching2021}.
However, second-order methods with direct linear algebra, when implemented correctly, can offer significantly faster and more robust convergence behavior.
By solving a Newton system at each iteration, they can provide fast local convergence and high robustness, even in the presence of nonconvexities.

Second-order methods with direct linear algebra are substantially more challenging to implement on GPUs.
This is because direct solution methods require the efficient factorization of large, sparse, indefinite Karush–Kuhn–Tucker (KKT) systems.
To the best of our knowledge, no GPU-based direct solver can directly handle the sparse indefinite KKT systems encountered in large-scale nonlinear optimization applications.
Historically, GPU-based sparse solvers have lagged behind their CPU counterparts in both performance and robustness.
Notably, it has been reported that GPU solvers do not offer significant acceleration for power system optimization problems~\cite{swirydowicz2021linear,tasseff2019exploring}.
Although several direct sparse solvers, such as {\tt cuSOLVERRF}, were available on GPUs, they were not widely adopted in practice for solving large-scale nonlinear optimization problems due to their limited performance and robustness.

This situation has changed in recent years with two advances: the development of pivoting-free interior-point methods and the emergence of new GPU-native sparse direct solvers.
First, a KKT system reformulation strategy, known as condensed-space methods, has been proposed to address the challenges of solving indefinite KKT systems within interior-point methods.
There are various forms of such reformulations, including the equality relaxation method (also called LiftedKKT) and the hybrid Golub-Greif method (also called HyKKT; adapted in~\cite{regev2023hykkt}).
Alternatively, null-space methods (also known as reduced-Hessian methods) eliminate equality constraints to reduce the KKT system to a dense problem, which maps well to GPU architectures \cite{pacaud2022condensed}.
These strategies transform the indefinite KKT system into a symmetric positive-definite (SPD) system, which can be solved using Cholesky factorization with static pivoting.

Furthermore, recent advances in GPU-based sparse direct solvers have significantly improved their performance and robustness.
Especially for solving positive-definite or quasi-definite systems, which do not require sophisticated pivoting strategies, GPU solvers now often surpass the performance of CPU-based solvers.
Notably, in November 2023, NVIDIA released {\tt cuDSS}, a new sparse direct solver providing efficient sparse factorizations with significantly improved performance over earlier GPU-based solvers.
These advances, combined with recent developments in automatic differentiation and interior-point methods that avoid numerical pivoting, enable robust and scalable second-order methods to be implemented fully on GPUs.
Coupling {\tt cuDSS} with a GPU-native AD engine allows us to solve large-scale nonlinear problems like OPF up to ten times faster than previous state-of-the-art methods~\cite{shin2023accelerating}.

Nevertheless, condensed-space methods have several known limitations.
Although condensed KKT systems remain sparse in many problem instances, performing the elimination of the inequality constraints often leads to significantly increased nonzero entries in the condensed KKT system
for certain instances.
The worst case is a fully dense constraint, where both the equality relaxation and HyKKT methods mentioned above can result in fully dense condensed KKT systems, rendering the problem prohibitively expensive to solve.
Consequently, condensed-space methods are inherently unsuitable for problems with dense constraints.
Accordingly, when these methods are compared against standard benchmark sets such as CUTEst, they are bound to exhibit varied performance due to the presence of instances with such dense constraints.
Furthermore, due to the nature of condensed KKT systems, the condition number of the condensed KKT system can be significantly larger than that of the original KKT system, which can lead to numerical issues.

\paragraph{A full Software Stack for GPU-Accelerated Nonlinear Programming.}
The state-of-the-art nonlinear optimization software stack consists of three key components:
(i) a high-level modeling interface,
(ii) a nonlinear optimization solver,
(iii) a sparse linear solver.
In the classical CPU-based setting, the most widely used software stack for nonlinear optimization consists of AMPL~\cite{fourer1990ampl} for modeling, Ipopt for the solver, and the HSL library (MA27, MA57, etc.) for sparse factorization.
To fully benefit from the computational power of GPUs, the entire optimization pipeline must be implemented on the GPU.
If any component is not GPU-compatible, it is necessary to introduce costly CPU–GPU communication, similar to double-buffering in MPI-based distributed systems.
This avoids expensive transfers between host (CPU) and device (GPU) memory.
For older generations of the PCIe interface, this can be a major bottleneck.
Thus, to achieve high performance, it is essential that all components of the optimization pipeline are GPU-native and operate directly in GPU memory.
The GPU-native software stack used in this paper is provided by MadNLP, ExaModels, and {\tt cuDSS}.
This paper focuses on the first component, the nonlinear optimization solver, while the other two components are briefly described below.

ExaModels is an algebraic modeling system designed for efficient model function and derivative evaluations on GPUs~\cite{shin2023accelerating}.
ExaModels provides a symbolic modeling front-end where users can define their optimization problems in a structure-revealing fashion, similar to the constraint template implemented in Gravity~\cite{hijazi2018gravity}.
Unlike classical modeling interfaces such as AMPL, JuMP, and CasADi, which provide flexible modeling features, ExaModels follows a different philosophy: it enforces strict compliance with iterator-based modeling syntax, allowing the modeling system to store the model equations in a structured fashion.
This ensures that the model's function and derivative evaluations can be implemented as parallelizable GPU kernels.
For structured model functions, ExaModels applies reverse-mode automatic differentiation to compute the derivatives, which are then executed over GPU-resident data arrays.
To the best of our knowledge, ExaModels is currently the only available GPU-native sparse modeling interface that provides automatic differentiation capabilities for nonlinear programming beyond unconstrained problems in machine learning.

NVIDIA's {\tt cuDSS} library~\cite{nvidiaNVIDIACuDSSPreview} provides a high-performance sparse direct solver.
The library supports Cholesky, LDL$^\top$, and LU factorizations, enabling the efficient solution of symmetric positive-definite and quasi-definite systems.
When the hybrid host/device execution mode is disabled, reordering (a major part of the analysis phase) is executed on the host, while symbolic factorization (another part of the analysis phase), numerical factorization, and solve are executed on the GPU.
Since ordering is inherently sequential, it is precomputed once on the CPU and transferred to the GPU without any impact on performance.
Partial pivoting strategies are available, but the 2x2 pivoting strategy --- commonly employed within sparse indefinite solvers -- is not supported, thus limiting their direct usage within classical augmented system-based interior-point methods.
CUDSS.jl, a community-maintained Julia wrapper for {\tt cuDSS}, provides a user-friendly interface to the library, enabling seamless integration with Julia-based optimization solvers such as MadNLP.

\subsection{Contributions}
In this article, we aim to theoretically and empirically assess the capabilities of GPU-accelerated solution strategies for large-scale sparse nonlinear programs.
Although we utilize the full GPU-resident software stack, we focus on the nonlinear optimization solver component, which is based on a filter line-search interior point method with two variants of condensed-space KKT system reformulation strategies: HyKKT~\cite{regev2023hykkt} and LiftedKKT~\cite{shin2023accelerating}.
Our main contributions are summarized below:
\begin{enumerate}
\item We perform a rigorous error analysis of condensed-space IPM methods for the two methods under study: HyKKT and LiftedKKT. We extend classical results from~\cite{wright1998ill} to show that condensed KKT matrices are structurally ill-conditioned in a controlled manner. This yields accurate error bounds for the Schur complement used in the HyKKT method as well as for the condensed KKT system within LiftedKKT.
\item We provide a fully GPU-resident software stack---consisting of ExaModels (derivative evaluation), MadNLP (interior point solver), and NVIDIA {\tt cuDSS} ---for solving large-scale sparse nonlinear optimization problems, based on two different condensed-space KKT system reformulation strategies: HyKKT and LiftedKKT. Compared to our previous work~\cite{shin2023accelerating}, we (i) have additionally implemented the HyKKT method and (ii) have replaced the linear solver {\tt cusolverRF} used in~\cite{shin2023accelerating} with NVIDIA's new {\tt cuDSS} library. As a result, the interior-point method is executed entirely on the GPU, from model evaluation to KKT system solving.
\item We conduct a comparative study of the two condensed-space methods, analyzing their accuracy and runtime performance. We go beyond traditional OPF test cases by including large-scale problems from the CUTEst benchmark~\cite{gould2015cutest}. Our findings demonstrate that the condensed-space IPM enables a remarkable tenfold acceleration in solving large-scale OPF instances when utilizing the GPU. However, performance outcomes on the CUTEst benchmark exhibit more variability. One of the main reasons for performance variability is that a single dense row can result in a dense condensed KKT system, which can make the factorization of the condensed KKT system significantly more expensive.
\item We demonstrate that condensed-space methods can be beneficial for a wide range of parallel architectures, not limited to GPUs. In fact, the condensed-space strategies presented in this paper are suitable not only for GPUs but also for other parallel architectures, such as multi-core CPUs with shared memory. We provide numerical results of the two condensed-space methods on multi-core CPUs. \add{Our results on the OPF benchmark show that when running on the CPU with Pardiso,
  HyKKT and LiftedKKT achieve a time to solution on par with HSL MA27 on large-scale OPF instances.}
\end{enumerate}

\subsection{Notations}
By default, the norm $\|\cdot\|$ refers to the 2-norm.
We define the conditioning of a matrix $A$ as $\cond(A) = \| A \| \|A^{-1} \|$.
For any real number $x$, we denote by $\widehat{x}$ its floating-point representation.
We denote $\epstol$ as the smallest positive number such that $\widehat{x} \leq (1 + \tau) x$ for $|\tau| < \epstol$.
In double precision, $\epstol = 1.1 \times 10^{-16}$.
We use the following notation to proceed with our error analysis.
For $p \in \mathbb{N}$ and a positive variable $h$:
\begin{itemize}
  \item We write $x = O(h^p)$ if there exists a constant $b > 0$ such that $\| x \| \leq b h^p$;
  \item We write $x = \Omega(h^p)$ if there exists a constant $a > 0$ such that $\| x \| \geq a h^p$;
  \item We write $x = \Theta(h^p)$ if there exist two constants $0 < a < b$ such that $a h^p \leq \| x \| \leq b h^p$.
\end{itemize}

\section{Primal-dual interior-point method}

The interior-point method (IPM) is among the most popular algorithms
for solving nonlinear programs. The algorithm is based on
reformulating the Karush-Kuhn-Tucker (KKT) conditions of the nonlinear program as a smooth
system of nonlinear equations using a homotopy-based method~\cite{nocedal_numerical_2006}.
In a standard implementation, the resulting system is solved iteratively using a Newton method combined with a line-search method for globalization.

This section provides a brief overview of the IPM algorithm and its numerical implementation.
Section \ref{sec:ipm:problem} describes the nonlinear programming problem formulation.
In Section \ref{sec:ipm:kkt}, we detail the Newton step computation within each IPM iteration.

\subsection{Problem formulation and KKT conditions}
\label{sec:ipm:problem}
We are interested in solving the following nonlinear program:
\begin{equation}
  \label{eq:problem}
    \min_{x \in \mathbb{R}^n} \;  f(x)
\quad \text{subject to}\quad
     g(x) = 0 \; , ~ h(x) \leq 0 \; ,
\end{equation}
where $f:\mathbb{R}^n \to \mathbb{R}$ is a real-valued function
encoding the objective, $g: \mathbb{R}^n \to \mathbb{R}^{m_e}$
encoding the equality constraints, and $h: \mathbb{R}^{n} \to
\mathbb{R}^{m_i}$ encoding the inequality constraints.
We assume that the functions $f, g, h$ are smooth
and twice differentiable.

We reformulate \eqref{eq:problem} using non-negative slack variables $s \geq 0$
into the equivalent formulation
\begin{equation}
  \label{eq:slack_problem}
    \min_{x \in \mathbb{R}^n, ~ s \in \mathbb{R}^{m_i}} \;  f(x)
    \quad \text{subject to} \quad
    \left\{
  \begin{aligned}
    & g(x) = 0 \; , ~ h(x) + s = 0 \; , \\
      &  s \geq 0  \; .
  \end{aligned}
  \right.
\end{equation}
In~\eqref{eq:slack_problem}, the inequality constraints
are encoded as variable bounds on the slack variables.
We denote by $y \in \mathbb{R}^{m_e}$ and $z \in \mathbb{R}^{m_i}$ the multipliers associated
with the equality and inequality constraints, respectively.
Similarly, we denote by $v \in \mathbb{R}^{m_i}$ the multipliers associated
with the bounds $s \geq 0$.

Using the dual variables $(y, z, v)$, we define the Lagrangian of \eqref{eq:slack_problem} as
\begin{equation}
  \label{eq:lagrangian}
  L(x, s, y, z, v) = f(x) + y^\top g(x) + z^\top \big(h(x) +s\big)
  - v^\top s \; .
\end{equation}
The KKT conditions of \eqref{eq:slack_problem} are:
\begin{subequations}
  \label{eq:kktconditions}
    \begin{align}
      & \nabla f(x) + \nabla g(x)^\top y + \nabla h(x)^\top z  = 0 \; , \\
      & z - v = 0 \; , \\
      & g(x) = 0 \; , \\
      & h(x) + s = 0 \; , \\
      \label{eq:kktconditions:comps}
      & 0 \leq s \perp v \geq 0 \; .
    \end{align}
\end{subequations}
The notation $s \perp v$ is a shorthand for the complementarity
condition $s_i v_i = 0$ for all $i=1,\cdots, m_i$.
The set of active constraints at a point $x$ is denoted by:
\begin{equation}
  \mathcal{B}(x) := \{ i \in\{ 1, \cdots, m_i\} \; | \; h_i(x) = 0 \} \; .
\end{equation}
The inactive set is the complement $\mathcal{N}(x) := \{1, \cdots, m_i \} \setminus \mathcal{B}(x)$.
We denote by $m_a$ the number of active constraints.
The active Jacobian is defined as $A(x) := \begin{bmatrix} \nabla g(x) \\ \nabla h_{\mathcal{B}}(x) \end{bmatrix} \in \mathbb{R}^{(m_e + m_a) \times n}$.

\subsection{Solving the KKT conditions with the interior-point method}
\label{sec:ipm:kkt}
The interior-point method aims to find a stationary point
satisfying the KKT conditions~\eqref{eq:kktconditions}.
The complementarity constraints \eqref{eq:kktconditions:comps}
render the KKT conditions non-smooth, complicating the solution of
the system~\eqref{eq:kktconditions}.
The IPM uses a homotopy continuation method to solve a simplified
version of \eqref{eq:kktconditions}, parameterized by a barrier
parameter $\mu > 0$~\cite[Chapter 19]{nocedal_numerical_2006}.
For positive $(x, s, v) > 0$, we solve the system
\begin{equation}
  \label{eq:kkt_ipm}
  F_\mu(x, s, y, z, v) =
  \begin{bmatrix}
       \nabla f(x) + \nabla g(x)^\top y + \nabla h(x)^\top z   \\
       z - v  \\
       g(x)  \\
       h(x) + s  \\
       S v - \mu e
  \end{bmatrix} = 0
   \; .
\end{equation}
We introduce in \eqref{eq:kkt_ipm} the diagonal matrix
$S = \diag(s_1, \cdots, s_{m_i})$, along with the vector of ones $e$.
As we drive the barrier parameter $\mu$ to $0$, the solution of the
system $F_\mu(x, s, y, z, v) = 0$ tends toward the solution of the
KKT conditions~\eqref{eq:kktconditions}.

At a fixed parameter $\mu$, the function $F_\mu(\cdot)$ is smooth. Hence, the system \eqref{eq:kkt_ipm} can be solved iteratively
using a sandard Newton method. For a primal-dual iterate
$w_k := (x_k, s_k, y_k, z_k, v_k)$, the next iterate is computed as
$w_{k+1} = w_k + \alpha_k d_k$, where $d_k$ is a descent
direction computed by solving the linear system
\begin{equation}
  \label{eq:newton_step}
  \nabla_w F_{\mu}(w_k) d_k = -F_{\mu}(w_k) \; .
\end{equation}
The step $\alpha_k$ is computed using a line-search algorithm
to ensure that the bounded variables remain positive
at the next primal-dual iterate: $(x_{k+1}, s_{k+1}, v_{k+1}) > 0$.
Once the iterates are sufficiently close to the central path,
the IPM decreases the barrier parameter $\mu$ to obtain a solution closer to
the original KKT conditions~\eqref{eq:kktconditions}.

In the IPM, the bulk of the computational effort lies in the computation of the Newton
step \eqref{eq:newton_step}, which involves assembling the Jacobian
$\nabla_w F_\mu(w_k)$ of the KKT conditions and solving the resulting linear system to compute
the descent direction $d_k$.
By writing out all the blocks, the system in~\eqref{eq:newton_step} expands as the $5 \times 5$
\emph{unreduced KKT system} of the following form:
\begin{equation}
  \label{eq:kkt:unreduced}
  \setlength\arraycolsep{5pt}
  \tag{$K_3$}
  \begin{bmatrix}
    W_k & 0   & G_k^\top         & H_k^\top         & \phantom{-}0 \\
    0   & 0   & 0\phantom{^\top} & I\phantom{^\top} & -I           \\
    G_k & 0   & 0\phantom{^\top} & 0\phantom{^\top} & \phantom{-}0 \\
    H_k & I   & 0\phantom{^\top} & 0\phantom{^\top} & \phantom{-}0 \\
    0   & V_k & 0\phantom{^\top} & 0\phantom{^\top} & \phantom{-}S_k
  \end{bmatrix}
  \begin{bmatrix}
    d_x \\
    d_s \\
    d_y \\
    d_z \\
    d_v
  \end{bmatrix}
  = - \begin{bmatrix}
    \nabla_x L(w_k) \\
       z_k - v_k  \\
       g(x_k)  \\
       h(x_k) + s_k  \\
       S_k v_k - \mu e
  \end{bmatrix} \; ,
\end{equation}
where $W_k = \nabla^2_{x x} L(w_k)$ is the Hessian and
$G_k = \nabla g(x_k)$, $H_k = \nabla h(x_k)$ are the Jacobians.
In addition, we define $S_k$, $U_k$ and $V_k$ as the diagonal matrices built respectively
from the vectors $s_k$, $u_k$ and $v_k$.
Note that~\eqref{eq:kkt:unreduced} can be symmetrized using simple block row and column operations.
Hereafter, we omit the iteration counter $k$ to simplify the notation.

\paragraph{Augmented KKT system.}
It is standard practice to eliminate in \eqref{eq:kkt:unreduced} the blocks associated
with the bound multipliers $v$ and solve the regularized
$4 \times 4$ symmetric system, known as the \emph{augmented KKT system}:
\begin{equation}
  \label{eq:kkt:augmented}
  \tag{$K_2$}
  \setlength\arraycolsep{3pt}
  \begin{bmatrix}
    W + \delta_w I & 0   & \phantom{-}G^\top           & \phantom{-}H^\top           \\
      0       & D_s + \delta_w I  & \phantom{-}0\phantom{^\top} & \phantom{-}I\phantom{^\top} \\
      G       & 0   & -\delta_c I  & \phantom{-}0\phantom{^\top} \\
    H       & I   & \phantom{-}0\phantom{^\top} & -\delta_c I
  \end{bmatrix}
  \begin{bmatrix}
    d_x \\
    d_s \\
    d_y \\
    d_z
  \end{bmatrix}
  = - \begin{bmatrix}
    r_1 \\ r_2 \\ r_3 \\ r_4
  \end{bmatrix} \; ,
\end{equation}
with the diagonal matrix $D_s := S^{-1} V$.
The vectors forming the right-hand sides are
$r_1 := \nabla f(x) + \nabla g(x)^\top y + \nabla h(x)^\top z$,
$r_2 := z - \mu S^{-1} e$,
$r_3 := g(x)$,
$r_4 := h(x) + s$.
Once \eqref{eq:kkt:augmented} is solved, we recover the updates for the bound multipliers via
$d_v = - S^{-1}(V d_s - \mu e) - v$.

Note that we have added regularization terms $\delta_w \geq 0$
and $\delta_c \geq 0$ in \eqref{eq:kkt:augmented} to ensure that the
matrix is invertible.
Without these terms, the augmented KKT system is non-singular
if and only if the Jacobian $J = \begin{bmatrix} G \; &\; 0 \\ H \;&\; I \end{bmatrix}$
has full row rank and the matrix $\begin{bmatrix} W  & 0 \\ 0 & D_s \end{bmatrix}$
projected onto the null-space of the Jacobian $J$ is definite~\cite{benzi2005numerical}.
The condition is satisfied if the inertia (the numbers
of positive, negative and zero eigenvalues) of the matrix~\eqref{eq:kkt:augmented} is $(n + m_i, m_i + m_e, 0)$.
We use the inertia-controlling method introduced in \cite[Section 3.1]{wachter2006implementation}
to regularize the augmented matrix by adding multiple of the identity
on the diagonal of \eqref{eq:kkt:augmented} if the inertia is not equal to $(n+m_i, m_e+m_i, 0)$.

As a result, the system \eqref{eq:kkt:augmented} is usually factorized using
an inertia-revealing \lblt factorization~\cite{duff1983multifrontal}.
Krylov methods are often not competitive for solving~\eqref{eq:kkt:augmented},
because the block diagonal terms $D_s$ become increasingly
ill-conditioned near the solution. Their use in the IPM has been limited to
linear and convex quadratic programming \cite{gondzio-2012} when paired
with a suitable preconditioner. We also refer to \cite{cao2016augmented}
for an efficient implementation of a preconditioned conjugate gradient
on GPU for solving the Newton step in an augmented Lagrangian interior-point
approach.

\paragraph{Condensed KKT system.}
The $4 \times 4$ KKT system \eqref{eq:kkt:augmented} can be further
reduced to a $2 \times 2$ system by eliminating the two blocks
$(d_s, d_z)$ associated with the inequality constraints.
The resulting system is called the \emph{condensed KKT system}:
\begin{equation}
  \label{eq:kkt:condensed}
  \tag{$K_1$}
  \setlength\arraycolsep{3pt}
  \begin{bmatrix}
    K & \phantom{-} G^\top \\
    G & -\delta_c I
  \end{bmatrix}
  \begin{bmatrix}
    d_x \\ d_y
  \end{bmatrix}
  =
  -
  \begin{bmatrix}
    r_1 + H^\top(D_H r_4 - C r_2) \\ r_3
  \end{bmatrix}
  =:
  \begin{bmatrix}
    \bar{r}_1 \\ \bar{r}_2
  \end{bmatrix}
   \; ,
\end{equation}
where $K := W + \delta_w I  + H^\top D_H H$ is the \emph{condensed matrix}
and the two diagonal matrices are:
\begin{equation}
  C := \big(I + \delta_c(D_s + \delta_w I)\big)^{-1} \; , \quad
  D_H := (D_s + \delta_w I) C \; .
\end{equation}
Using the solution of~\eqref{eq:kkt:condensed},
we recover the updates for the slacks and inequality multipliers via
$d_z = -C r_2 + D_H(H d_x + r_4)$ and $d_s = -(D_s + \delta_w I)^{-1}(r_2 + d_z)$.
Using Sylvester's law of inertia, we can prove that
\begin{equation}
  \label{eq:ipm:inertia}
  \inertia(K_2) = (n+m_i, m_e+m_i, 0) \iff
  \inertia(K_1) = (n, m_e, 0) \;.
\end{equation}
This indicates that one can check the inertia of $K_1$ to indirectly check the inertia of $K_2$, which, in turn, can be used to determine the regularization parameters $\delta_w$ and $\delta_c$ needed to ensure that the KKT step $d_k$ is a descent direction.

\paragraph{Iterative refinement.}
Compared to \eqref{eq:kkt:unreduced}, the diagonal matrix $D_s$ introduces additional ill-conditioning in \eqref{eq:kkt:augmented}, which is amplified in the condensed form~\eqref{eq:kkt:condensed}: diagonal elements tend to infinity if a variable approaches its bound and to $0$ if the variable is inactive.
To address the numerical errors arising from such ill-conditioning, most implementations of the IPM employ Richardson iterations on the original system~\eqref{eq:kkt:unreduced} to refine the solution returned by the direct sparse linear solver (see \cite[Section 3.10]{wachter2006implementation}).

\subsection{Discussion}
We have identified three formulations of the KKT systems
appearing at each IPM iteration. The original formulation
\eqref{eq:kkt:unreduced} has better
conditioning than the two alternatives \eqref{eq:kkt:augmented} and
\eqref{eq:kkt:condensed}, but it is much larger.
The second formulation~\eqref{eq:kkt:augmented}, often called the \textit{augmented KKT system}, is
used by default in state-of-the-art nonlinear solvers~\cite{wachter2006implementation,waltz2006interior}.
The system~\eqref{eq:kkt:augmented} is typically factorized using an \lblt factorization: for sparse matrices, the Duff and Reid
multifrontal algorithm~\cite{duff1983multifrontal} is the preferred method (as implemented in the HSL linear solvers MA27 and MA57~\cite{duff2004ma57}).
The condensed KKT system~\eqref{eq:kkt:condensed} is often discarded because
its conditioning is worse
than that of \eqref{eq:kkt:augmented} (implying less accurate solutions).
Additionally, condensation may result in increased fill-in within the condensed system \eqref{eq:kkt:condensed}~\cite[Section 19.3, p.571]{nocedal_numerical_2006}.
In the worst cases, \eqref{eq:kkt:condensed} itself may become fully dense if an inequality row is completely dense.
Consequently, condensation methods are not commonly utilized in practical optimization settings.
To the best of our knowledge, Knitro~\cite{waltz2006interior} is the only solver that supports computing the descent direction using \eqref{eq:kkt:condensed}.

\section{Solving KKT Systems on the GPU}
The GPU has emerged as a prominent computing platform not only for graphics but also for general-purpose computing.
GPUs employ a SIMD architecture that yields excellent throughput for parallelizing small-scale operations.
However, their utility remains limited when algorithms require global communication.
Sparse factorization algorithms, which heavily rely on numerical pivoting, pose significant challenges for implementation on GPUs.
Previous research has demonstrated that GPU-based linear solvers significantly lag behind their CPU counterparts \cite{swirydowicz2021linear,tasseff2019exploring}.
One emerging strategy is to utilize sparse factorization techniques that do not necessitate numerical pivoting \cite{regev2023hykkt,shin2023accelerating} by leveraging the structure of the condensed KKT system \eqref{eq:kkt:condensed}.

We present two alternative methods to solve \eqref{eq:kkt:condensed}.
HyKKT, introduced in \S\ref{sec:kkt:golubgreif}, uses the hybrid strategy of Golub \& Greif~\cite{golub2003solving,regev2023hykkt}.
LiftedKKT~\cite{shin2023accelerating} employs an equality relaxation strategy and is presented in \S\ref{sec:kkt:sckkt}.
Before introducing these methods, we make the following observation, which simplifies the setting for our analysis.
\begin{remark}
  \label{rmk:dualreg}
The primal-dual regularization parameters $(\delta_w,\delta_c)$ are typically determined by examining the inertia of the system $K_2$.
In particular, the dual regularization $\delta_c$ is activated if zero eigenvalues are detected or if the filter line-search algorithm enters a feasibility restoration phase~\cite[Section 3.3]{wachter2006implementation}. The primal regularization $\delta_w$ is activated if the number of positive eigenvalues is less than $n + m_i$. For a positive $\delta_c$, the KKT system \eqref{eq:kkt:condensed} can be solved via the following equations:
\begin{equation}\label{eqn:primal}
  \left( K + \frac{1}{\delta_c} G^\top G \right) d_x = \bar{r}_1 - \frac{1}{\delta_c} \bar{r}_2 \;,
\end{equation}
and by setting $d_y = \delta_c^{-1} (G d_x - \bar{r}_2)$. The system in
\eqref{eqn:primal} is a Schur complement of the original condensed system in
\eqref{eq:kkt:condensed} and is positive definite if the system in
\eqref{eq:kkt:condensed} has the correct inertia $(n, m_e, 0)$.
Under standard inertia correction, it is
always Cholesky-factorizable.
Thus, when discussing the special treatment of the condensed KKT system
\eqref{eq:kkt:condensed}, we assume that the dual regularization is $\delta_c
= 0$, as $\delta_c > 0$ simplifies the solution procedure.
\end{remark}

\subsection{Golub \& Greif strategy: HyKKT}
\label{sec:kkt:golubgreif}
The Golub \& Greif~\cite{golub2003solving} strategy reformulates the KKT
system using an Augmented Lagrangian formulation.
It has been recently revisited in \cite{regev2023hykkt} to solve the
condensed KKT system~\eqref{eq:kkt:condensed} on GPUs.
The key idea is to reformulate the condensed KKT system \eqref{eq:kkt:condensed} in an equivalent form:
\begin{equation}
  \label{eq:kkt:hykkt}
  \begin{bmatrix}
    K_\gamma & G^\top \\
    G & 0
  \end{bmatrix}
  \begin{bmatrix}
    d_x \\ d_y
  \end{bmatrix}
  =
  \begin{bmatrix}
    \bar{r}_1 + \gamma G^\top \bar{r}_2 \\
    \bar{r}_2
  \end{bmatrix} \; ,
\end{equation}
where $K_\gamma := K + \gamma G^\top G$ is the regularized matrix (assuming $\delta_c=0$ per Remark \ref{rmk:dualreg}).
Let $\begin{bmatrix} Y & Z \end{bmatrix}$ be an orthogonal
matrix, where the columns of $Z$ form a basis for the null space of the Jacobian $G$.
Using a classical result from~\cite{debreu1952definite}, if $G$ has full row
rank, then there exists a threshold value $\underline{\gamma}$ such that for
all $\gamma > \underline{\gamma}$, the reduced Hessian $Z^\top K Z$ is
positive definite if and only if $K_\gamma$ is positive definite.
Using Sylvester's law of inertia stated in \eqref{eq:ipm:inertia}, we deduce that for $\gamma > \underline{\gamma}$, $\inertia(K_2) = (n + m_i, m_e + m_i, 0)$ if and only if the condensed matrix $K_\gamma$ is positive definite.

The linear solver HyKKT~\cite{regev2023hykkt} leverages the positive definiteness of $K_\gamma$ to solve \eqref{eq:kkt:hykkt} using a hybrid direct-iterative method with the following steps:
\begin{enumerate}
  \item Assemble $K_\gamma$ and factorize it using sparse Cholesky;
  \item Solve the Schur complement of \eqref{eq:kkt:hykkt} using a conjugate gradient (\CG) algorithm to recover the dual descent direction:
    \begin{equation}
      \label{eq:kkt:schurcomplhykkt}
      (G K_\gamma^{-1} G^\top) d_y = G K_\gamma^{-1} (\bar{r}_1 + \gamma G^\top \bar{r}_2) - \bar{r}_2 \; ;
    \end{equation}
  \item Solve the system $K_\gamma d_x = \bar{r}_1 + \gamma G^\top \bar{r}_2 - G^\top d_y$ to recover the primal descent direction.
\end{enumerate}
The method employs a sparse Cholesky factorization along with the conjugate gradient (\CG) algorithm \cite{hestenes-stiefel-1952}.
Sparse Cholesky factorization is advantageous because it is stable without
numerical pivoting, making the algorithm tractable on GPUs.
Each \CG iteration requires applying sparse triangular solves with the factors of $K_\gamma$.
For this reason, HyKKT is efficient only if the \CG solver converges in few iterations.
Fortunately, the eigenvalues of the Schur complement $S_\gamma := G
K_\gamma^{-1} G^\top$ all converge to $\frac{1}{\gamma}$ as the
regularization parameter $\gamma$ increases \cite[Theorem 4]{regev2023hykkt},
implying that $\lim_{\gamma \to \infty} \cond(S_\gamma) = 1$.
Since the convergence of the \CG method depends on the number of distinct
eigenvalues of $S_{\gamma}$, larger values of $\gamma$ lead to faster convergence of the \CG algorithm in \eqref{eq:kkt:schurcomplhykkt}.
Although we observe similar performance, these methods ensure a monotonic decrease in the residual norm of \eqref{eq:kkt:schurcomplhykkt} at each iteration.

\begin{remark}
    The parameter $\gamma$ should be set above a given threshold $\underline{\gamma}$ to ensure positive definiteness for $K_\gamma$.
    However, from a numerical standpoint, it is preferable to choose moderate
    values of $\gamma$, since large $\gamma$ increases the ill-conditioning of $K_\gamma$, thereby deteriorating the solution's quality.
    In \cite{golub2003solving}, the authors recommend choosing $\gamma = \| K \| / \|G\|^2$, which offers a good empirical trade-off.
    In \cite[Lemma 2.10]{gould2010spectral}, a tighter estimate is provided
    for the threshold $\underline{\gamma}$ if $G$ has full row rank and $Z^\top K Z$ is positive definite:
    \begin{equation}
      \label{eq:kkt:boundgamma}
      \underline{\gamma} = \dfrac{1}{\sigma_{\min}^2} \left(
        \dfrac{\|K\|^2}{\lambda_{\min}^Z} - \lambda_{\min}^Y
      \right) \; ,
    \end{equation}
    where $\sigma_{\min}$ is the smallest nonzero singular value of the
    Jacobian $G$, $\lambda_{\min}^Z > 0$ is the smallest eigenvalue of
    $Z^\top K Z$, and $\lambda_{\min}^Y$ the smallest eigenvalue of $Y^\top K Y$.
    From \eqref{eq:kkt:boundgamma}, we observe that $\underline{\gamma}$ is
    inversely proportional to $\lambda_{\min}^Z$ and $\sigma_{\min}$,
    indicating that it can reach a large magnitude if $G$ has nearly linearly dependent
    rows or if $Z^\top K Z$ is close to being indefinite.
    In other words, HyKKT may have difficulties solving problems with
    nearly dependent equality constraints or when the reduced Hessian is nearly indefinite.
\end{remark}

\subsection{Equality Relaxation Strategy: LiftedKKT}
\label{sec:kkt:sckkt}

For a small relaxation parameter $\tau > 0$ (chosen based on the numerical tolerance of the optimization solver $\varepsilon_{tol}$), the equality relaxation strategy~\cite{shin2023accelerating} approximates the equalities with lifted inequalities:
\begin{equation}
  \label{eq:problemrelaxation}
    \min_{x \in \mathbb{R}^n} \;  f(x)
\quad \text{subject to}\quad
     - \tau \leq g(x) \leq \tau \;,~  h(x) \leq 0  \; .
\end{equation}
The problem~\eqref{eq:problemrelaxation} has only inequality constraints. As
a result, the condensed KKT system \eqref{eq:kkt:condensed} reduces to
\begin{equation}
  \label{eq:liftedkkt}
    K_\tau \,d_x = - r_1 - H_\tau^\top(D_H r_4 - C r_2) \; ,
\end{equation}
where $H_\tau = \big(G^\top ~ H^\top\big)^\top$ and
$K_\tau := W + \delta_w I + H_\tau^\top D_H H_\tau$ (assuming $\delta_c=0$ per Remark \ref{rmk:dualreg}). Using the relation~\eqref{eq:ipm:inertia}, the matrix $K_\tau$
is guaranteed to be positive definite if the primal regularization parameter
$\delta_w$ is adequately large. As such, $\delta_w$ is chosen dynamically
using the inertia information of the system in \eqref{eq:kkt:condensed}.
Therefore, $K_\tau$ can be factorized using Cholesky decomposition,
satisfying the requirement for stable pivoting on GPUs.
The relaxation introduces an error in the final solution. Fortunately,
this error is of the same order as the solver tolerance, so it does not
significantly degrade the solution quality for small $\varepsilon_{tol}$.

While this method can be implemented with minor modifications to the
optimization solver, the presence of tight inequalities in
\eqref{eq:problemrelaxation} causes severe ill-conditioning throughout the
IPM iterations. Therefore, an accurate iterative refinement algorithm
is necessary to achieve reliable convergence behavior.

\subsection{Discussion}
We have introduced two algorithms to solve KKT systems on GPUs.
Unlike classical implementations, these methods do not require computing a
sparse \lblt factorization of the KKT system. Instead, they utilize alternative
reformulations based on the condensed KKT system~\eqref{eq:kkt:condensed}.
Both strategies rely on Cholesky factorization: HyKKT factorizes a
positive-definite matrix $K_\gamma$ obtained using an Augmented Lagrangian strategy,
whereas LiftedKKT factorizes a positive-definite matrix $K_\tau$ after employing an equality relaxation strategy.
We will demonstrate in the next section that the ill-conditioned matrices
$K_\gamma$ and $K_\tau$ possess a specific structure that limits accuracy loss in the IPM.

\section{Conditioning of the condensed KKT system}
The condensed matrix $K$ appearing in \eqref{eq:kkt:condensed} is known to become
increasingly ill-conditioned as the primal-dual iterates approach a local solution with active inequalities.
This behavior is amplified for the matrices $K_\gamma$ and $K_\tau$,
since HyKKT and LiftedKKT require
large $\gamma$ and small $\tau$, respectively. In this section, we analyze the
numerical error associated with the solution of the condensed KKT
system and discuss how the structured ill-conditioning renders its
adverse effect relatively benign.

We first discuss the perturbation bound for a generic linear system $Mx = b$.
The relative error after perturbing the right-hand side by $\Delta b$ is bounded by:
\begin{subequations}
  \label{eq:cond:defaultbond}
\begin{equation}
  \| \Delta x \| \leq \| M^{-1} \| \| \Delta b \| \;, \quad
  \frac{\| \Delta x \| }{\| x \|} \leq \cond(M) \, \frac{\| \Delta b \|}{\|b \|} \;.
\end{equation}
If the matrix is perturbed by $\Delta M$, the perturbed solution
$\widehat{x}$ satisfies $\Delta x = \widehat{x}- x =  - (M + \Delta M)^{-1} \Delta M \widehat{x}$.
If $\cond(M) \approx \cond(M + \Delta M)$, we have $M \Delta x \approx -\Delta M x$ (neglecting second-order terms),
giving the bounds
\begin{equation}
  \| \Delta x \| \lessapprox \|M^{-1}\| \|\Delta M \| \|x \| \; , \quad
  \frac{\| \Delta x \|}{\|x\|} \lessapprox \cond(M)\frac{\|\Delta M \|}{\|M\|} \; .
\end{equation}
\end{subequations}
The relative errors are bounded above by a term depending on the conditioning $\cond(M)$.
Hence, it is legitimate to investigate the impact of ill-conditioning
when solving the condensed system~\eqref{eq:kkt:condensed} with LiftedKKT or HyKKT.
We will show that we can tighten the bounds in \eqref{eq:cond:defaultbond}
by exploiting the structured ill-conditioning of the condensed matrix $K$.
We base our analysis on \cite{wright1998ill}, where
the author emphasizes the condensed KKT
system~\eqref{eq:kkt:condensed} without equality constraints. We generalize these results to the
matrix $K_\gamma$, which incorporates both equality and inequality
constraints. The results extend directly to $K_\tau$ (by setting the number of equalities to be zero).

\subsection{Centrality conditions}
We begin by recalling important results regarding the iterates of the interior-point algorithm.
For $p := (x, s, y, z)$, we denote by
$(p, v)$ the current primal-dual iterate,
and $(p^\star, v^\star)$ a solution of the KKT conditions~\eqref{eq:kktconditions}.
We denote $\cactive = \cactive(x^\star)$ the active-set at the optimal solution $x^\star$,
and $\cinactive = \cinactive(x^\star)$ the inactive set.
In this section, we focus on the \emph{local} convergence behavior of the
primal-dual iterate, assuming $(p, v)$ is sufficiently close to the solution
$(p^\star, v^\star)$.

\begin{assumption}
  \label{hyp:ipm}
  Let $(p^\star, v^\star)$ be a primal-dual solution
  satisfying the KKT conditions~\eqref{eq:kktconditions}. Assume the following:
  \begin{itemize}
  \item Continuity: The Hessian $\nabla^2_{x x} L(\cdot)$ is Lipschitz continuous
    near $p^\star$;
  \item Linear Independence Constraint Qualification (LICQ): The active Jacobian $A(x^\star)$ has full row rank;
  \item Strict Complementarity (SCS): For every $i \in \mathcal{B}(x^\star)$, $z_i^\star > 0$.
  \item Second-order sufficiency (SOSC): For every $h \in \nullspace\!\big(A(x^\star)\big)$,
    $h^\top \nabla_{x x}^2 L(p^\star)h > 0$.
  \end{itemize}
\end{assumption}

We denote $\delta(p, v) = \| (p, v) - (p^\star, v^\star) \|$ the Euclidean distance to the
primal-dual stationary point $(p^\star, v^\star)$.
From \cite[Theorem 2.2]{wright2001effects}, if Assumption~\ref{hyp:ipm}
holds at $p^\star$ and $v > 0$,
\begin{equation}
  \delta(p, v) = \Theta\left( \left\Vert \begin{bmatrix}
      \nabla_p L(p, v) \\ \min(v, s)
  \end{bmatrix}
  \right\Vert \right) \; .
\end{equation}
For feasible iterate $(s, v) > 0$, we define the \emph{duality measure} $\Xi(s, v)$ as the mapping
\begin{equation}
  \Xi(s, v) = s^\top v / m_i \; , 
\end{equation}
where $m_i$ is the number of inequality constraints.
The duality measure quantifies the satisfaction of the complementarity
constraints. For a solution $(p^\star, v^\star)$, $\Xi(s^\star, v^\star) = 0$.
The duality measure can be used to define the barrier parameter in IPM.

We assume the iterates $(p, v)$ satisfy the \emph{centrality conditions}
\begin{subequations}
  \label{eq:centralitycond}
  \begin{align}
    & \| \nabla_p \mathcal{L}(p, v) \| \leq C \; \Xi(s, v) \;,  \\
    \label{eq:centralitycond:complement}
    & (s, v) > 0 \;,\quad s_i v_i \geq \alpha \, \Xi(s, v) \quad \forall i =1, \cdots, m_i \; ,
  \end{align}
\end{subequations}
for some constants $C > 0$ and $\alpha \in (0, 1)$.
Conditions~\eqref{eq:centralitycond:complement} ensure that the products
$s_i v_i$ are not too disparate in the diagonal term $D_s$.
This condition is satisfied (albeit rather loosely)
in the Ipopt solver (see \cite[Equation (16)]{wachter2006implementation}).

\begin{proposition}[\cite{wright2001effects}, Lemma 3.2 and Theorem 3.3]
  \label{prop:cond:boundslack}
  Suppose $p^\star$ satisfies Assumption~\ref{hyp:ipm}.
  If the current primal-dual iterate $(p, v)$ satisfies the centrality
  conditions~\eqref{eq:centralitycond}, then
  \begin{subequations}
    \begin{align}
      i \in \mathcal{B} \implies s_i = \Theta(\Xi) \, , \quad v_i = \Theta(1) \;, \\
      i \in \mathcal{N} \implies s_i = \Theta(1) \, , \quad v_i = \Theta(\Xi) \; .
    \end{align}
    and the distance to the solution $\delta(p, v)$ is bounded by the duality measure $\Xi$:
    \begin{equation}
      \delta(p, v) = O(\Xi) \; .
    \end{equation}
  \end{subequations}
\end{proposition}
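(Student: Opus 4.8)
The plan is to run a bootstrap argument resting on two ingredients already available: the error bound $\delta(p,v) = \Theta\big(\|(\nabla_p L(p,v),\, \min(v,s))\|\big)$ from \cite[Theorem 2.2]{wright2001effects} and the centrality conditions \eqref{eq:centralitycond}. First I would squeeze out a crude estimate of $\delta(p,v)$, then use it together with strict complementarity to decide, componentwise, which of $s_i$ and $v_i$ stays bounded away from zero and which vanishes as $\Xi \to 0$, and finally recycle the sharpened estimates back into Theorem 2.2 to upgrade the crude bound to the desired $O(\Xi)$.

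\textbf{Main steps.} (i) From the centrality condition $s_i v_i \geq \alpha\,\Xi$ together with $s^\top v = m_i\,\Xi$ and $(s,v) > 0$, one gets $\alpha\,\Xi \leq s_i v_i \leq m_i\,\Xi$, i.e. $s_i v_i = \Theta(\Xi)$ for every $i$. (ii) Since $\min(v_i,s_i)^2 \leq s_i v_i$, this gives $\|\min(v,s)\| = O(\sqrt{\Xi})$; combined with $\|\nabla_p L(p,v)\| \leq C\,\Xi$ and Theorem 2.2, we obtain the crude bound $\delta(p,v) = O(\sqrt{\Xi})$, which tends to $0$ as $\Xi \to 0$. (iii) Under Assumption~\ref{hyp:ipm}, $p^\star$ is an isolated KKT point with $v^\star = z^\star$, and strict complementarity gives $v_i^\star > 0$ on $\mathcal{B}$ while $s_i^\star > 0$ and $v_i^\star = 0$ on $\mathcal{N}$; since the iterate lies within distance $\delta(p,v) = O(\sqrt{\Xi})$ of $(p^\star,v^\star)$, for $\Xi$ small every component stays within a fixed factor of its limit, so $v_i = \Theta(1)$ on $\mathcal{B}$ and $s_i = \Theta(1)$ on $\mathcal{N}$. (iv) Dividing the product estimate of step (i) by these $\Theta(1)$ quantities yields $s_i = \Theta(\Xi)$ on $\mathcal{B}$ and $v_i = \Theta(\Xi)$ on $\mathcal{N}$; hence for $\Xi$ small the componentwise minimum is the vanishing component, $\|\min(v,s)\| = \Theta(\Xi)$, and feeding this back into Theorem 2.2 gives $\delta(p,v) = O(\Xi)$.

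\textbf{Expected difficulty.} The crux is the bootstrap itself: one cannot claim $v_i = \Theta(1)$ on $\mathcal{B}$ without first knowing the iterate is close to $p^\star$, and one cannot reach $\delta(p,v) = O(\Xi)$ without first controlling the individual components — the crude $O(\sqrt{\Xi})$ pass in step (ii) is what breaks this circularity. The remaining work is bookkeeping: checking that the constants hidden in the $\Theta/O$ notation can be taken uniform over all iterates satisfying \eqref{eq:centralitycond} in a neighborhood of $p^\star$, which is exactly where LICQ and SOSC enter (through the regularity and isolation of $p^\star$ underpinning Theorem 2.2), and keeping in mind that "$\Xi$ (equivalently $\mu$) sufficiently small" is the implicit hypothesis behind all the asymptotic statements.
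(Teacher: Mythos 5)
The paper does not prove this proposition at all: it is imported verbatim from \cite{wright2001effects} (Lemma 3.2 and Theorem 3.3), so there is no in-paper argument to compare against. Your reconstruction is correct and is essentially the standard proof from that reference: the product bound $\alpha\Xi \leq s_i v_i \leq m_i\Xi$ from \eqref{eq:centralitycond:complement} and the definition of $\Xi$, the crude $O(\sqrt{\Xi})$ pass through the error bound of \cite[Theorem 2.2]{wright2001effects} to break the circularity, strict complementarity to pin down which factor of each product is $\Theta(1)$, and a second pass through the error bound to get $\delta(p,v)=O(\Xi)$. The only caveat worth making explicit is that the error bound of Theorem 2.2 is itself a local statement, valid for $(p,v)$ in a fixed neighborhood of $(p^\star,v^\star)$; your step (ii) therefore needs the standing hypothesis that the iterate already lies in that neighborhood (which is the implicit assumption in all of these analyses, and which you do acknowledge), rather than deriving closeness from the centrality conditions alone.
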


\subsection{Structured ill-conditioning}
This subsection examines the structure
of the condensed matrix $K_\gamma$ in HyKKT. All results
apply directly to the matrix $K_\tau$ in LiftedKKT, by setting the number
of equality constraints to $m_e = 0$.
First, we show that if the iterates $(p, v)$ satisfy
the centrality conditions~\eqref{eq:centralitycond}, then the
condensed matrix $K_\gamma$ exhibits structured ill-conditioning.

\subsubsection{Invariant subspaces in $K_\gamma$}
Without regularization we have $K_\gamma = W + H^\top D_s H + \gamma G^\top G$, with
the diagonal $D_s = S^{-1} V$.
We denote by $m_a$ the cardinality of the active set $\mathcal{B}$,
$H_{\cactive}$ the Jacobian of active inequality constraints, $H_{\cinactive}$ the
Jacobian of inactive inequality constraints and by
$A := \begin{bmatrix} H_{\cactive}^\top & G^\top \end{bmatrix}^\top$ the active Jacobian.
We define the minimum and maximum active slack values as
\begin{equation}
  s_{min} = \min_{i \in \cactive} s_i \; , \quad
  s_{max} = \max_{i \in \cactive} s_i \; .
\end{equation}
We recall that $m_e$ is the number of equality constraints,
and define $\ell := m_e + m_a$.

We express the structured ill-conditioning of $K_\gamma$ by
modifying the approach outlined in \cite[Theorem 3.2]{wright1998ill} to account for the additional
term $\gamma G^\top G$ arising from the equality constraints.
We show that the matrix $K_\gamma$ has two invariant subspaces
(in the sense of \cite[Chapter 5]{stewart1990matrix}),
associated respectively with the range of the transposed active Jacobian
(\emph{large space}) and with the null space of the active Jacobian (\emph{small space}).

\begin{theorem}[Properties of $K_\gamma$]
  \label{thm:cond}
  Suppose the condensed matrix is evaluated at a primal-dual
  point $(p, \nu)$ satisfying~\eqref{eq:centralitycond},
  for sufficiently small $\Xi$.
  Let $\lambda_1, \cdots, \lambda_n$ be the $n$ eigenvalues of
  $K_\gamma$, ordered such that $|\lambda_1| \geq  \cdots \geq |\lambda_n|$.
  Let $\begin{bmatrix} Y & Z \end{bmatrix}$ be an orthogonal
  matrix, where the columns of $Z$ form a basis for the null-space of
  $A$. Let $\underline{\sigma} :=\min\left(\frac{1}{\Xi}, \gamma\right)$
  and $\overline{\sigma} := \max\left(\frac{1}{s_{min}}, \gamma\right)$.
  Then,
  \begin{enumerate}
    \item[(i)] The $\ell$ largest-magnitude eigenvalues of $K_\gamma$ are positive,
      with $\lambda_1 = \Theta(\overline{\sigma})$ and $\lambda_{\ell} = \Omega(\underline{\sigma})$.
    \item[(ii)] The $n-\ell$ smallest-magnitude eigenvalues of $K_\gamma$
      are $\Theta(1)$.
    \item[(iii)] If $0 < \ell < n$, then $\cond(K_\gamma) = \Theta(\overline{\sigma})$.
    \item[(iv)] There exist orthonormal matrices $\widetilde{Y}$ and $\widetilde{Z}$ for
      simple invariant subspaces of $K_\gamma$ such that $Y - \widetilde{Y} = O(\underline{\sigma}^{-1})$
      and $Z - \widetilde{Z} = O(\underline{\sigma}^{-1})$.
  \end{enumerate}
\end{theorem}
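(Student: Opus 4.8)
The plan is to follow the strategy of \cite[Theorem 3.2]{wright1998ill}, adapted to absorb the extra rank-$m_e$ term $\gamma G^\top G$ into the ``large space'' governed by the active Jacobian $A = \begin{bmatrix} H_{\cactive}^\top & G^\top\end{bmatrix}^\top$. First I would block-diagonalize $K_\gamma$ in the orthonormal coordinates $\begin{bmatrix} Y & Z\end{bmatrix}$, writing $\begin{bmatrix} Y & Z\end{bmatrix}^\top K_\gamma \begin{bmatrix} Y & Z\end{bmatrix} = \begin{bmatrix} Y^\top K_\gamma Y & Y^\top K_\gamma Z \\ Z^\top K_\gamma Y & Z^\top K_\gamma Z\end{bmatrix}$. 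The key observation is that $K_\gamma = W + D_x + \delta_w I + H_{\cinactive}^\top (D_s)_{\cinactive} H_{\cinactive} + H_{\cactive}^\top (D_s)_{\cactive} H_{\cactive} + \gamma G^\top G$; using Proposition~\ref{prop:cond:boundslack}, the inactive term $H_{\cinactive}^\top (D_s)_{\cinactive} H_{\cinactive}$ has entries $(D_s)_i = v_i/s_i = \Theta(\Xi) = O(1)$, so it folds into a ``benign'' part $\widetilde{W} := W + D_x + \delta_w I + H_{\cinactive}^\top (D_s)_{\cinactive} H_{\cinactive}$ that is $\Theta(1)$, while the active term and the $\gamma$ term both live in $\rangespace(A^\top)$. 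Writing $E := \diag\big((D_s)_{\cactive}, \gamma I_{m_e}\big)$, we get $K_\gamma = \widetilde{W} + A^\top E A$, where $E$ has all diagonal entries of order between $\underline{\sigma} = \min(1/\Xi, \gamma)$ and $\overline{\sigma} = \max(1/s_{min}, \gamma)$ — here I use $s_i = \Theta(\Xi)$ for active $i$, so $1/s_i \in [1/s_{max}, 1/s_{min}]$ and $1/s_{max} = \Omega(1/\Xi)$.

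Next I would estimate the three blocks. Since $AZ = 0$ by definition of $Z$, the small block is $Z^\top K_\gamma Z = Z^\top \widetilde{W} Z$, which is $\Theta(1)$ and, by SOSC (Assumption~\ref{hyp:ipm}) together with the fact that $Z^\top \widetilde W Z \to Z^\star{}^\top \nabla^2_{xx}L(p^\star) Z^\star$ as $\Xi \to 0$, is positive definite for small $\Xi$ — this gives the $n-\ell$ eigenvalues that are $\Theta(1)$, part (ii). For the large block, $Y^\top K_\gamma Y = Y^\top \widetilde{W} Y + (AY)^\top E (AY)$; since $A$ has full row rank $\ell$ (LICQ) and $Y$ spans $\rangespace(A^\top)$, the matrix $AY$ is square and nonsingular with singular values $\Theta(1)$, so $(AY)^\top E (AY)$ has all $\ell$ eigenvalues squeezed between $\Omega(\underline\sigma)$ and $O(\overline\sigma)$; adding the $\Theta(1)$ perturbation $Y^\top\widetilde W Y$ leaves this unchanged for small $\Xi$, giving $\lambda_1 = \Theta(\overline\sigma)$, $\lambda_\ell = \Omega(\underline\sigma)$, and positivity — part (i). The cross block $Y^\top K_\gamma Z = Y^\top \widetilde W Z = O(1)$ because $A^\top E A Z = 0$. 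Part (iii) then follows from (i)–(ii): $\cond(K_\gamma) = |\lambda_1|/|\lambda_n| = \Theta(\overline\sigma)/\Theta(1) = \Theta(\overline\sigma)$, once one checks $\delta_w$ and $\delta_c$ do not spoil the estimates (the inertia hypothesis keeps $\delta_w$ bounded, and $\delta_c = 0$ in HyKKT; for $K_\tau$ set $m_e = 0$).

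For part (iv), the invariant-subspace perturbation, I would invoke the Stewart $\sin\Theta$-type bound from \cite[Chapter 5]{stewart1990matrix}: the off-diagonal block $Y^\top K_\gamma Z = O(1)$ relative to the spectral gap between the large block (eigenvalues $\Omega(\underline\sigma)$) and the small block (eigenvalues $\Theta(1)$) yields an exact invariant subspace within $O(1/\underline\sigma) = O(\underline\sigma^{-1})$ of $\rangespace(Y)$, and symmetrically for $Z$; one then extracts orthonormal bases $\widetilde Y, \widetilde Z$ of these true invariant subspaces with $Y - \widetilde Y = O(\underline\sigma^{-1})$ and $Z - \widetilde Z = O(\underline\sigma^{-1})$. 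The main obstacle I anticipate is bookkeeping the interplay of the two scales $\underline\sigma$ and $\overline\sigma$ — in particular handling the regime $\gamma < 1/\Xi$ versus $\gamma > 1/s_{min}$ so that the ordering claims on $\lambda_1,\dots,\lambda_\ell$ and the gap used in (iv) hold uniformly; one must be careful that the constants hidden in $\Theta(\cdot)$ for $AY$ and for $\widetilde W$ are independent of $\Xi$ and $\gamma$, which is where the centrality conditions~\eqref{eq:centralitycond} and Proposition~\ref{prop:cond:boundslack} do the real work.
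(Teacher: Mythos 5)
Your proposal is correct and follows essentially the same route as the paper: the same split of $K_\gamma$ into a $\Theta(1)$ part plus $A^\top D_\gamma A$ with a scaled diagonal absorbing both the active slacks and $\gamma$, the same identification of the large and small spaces with $\rangespace(A^\top)$ and $\nullspace(A)$, and the same gap-based perturbation argument for (ii) and (iv). The only difference is that the paper delegates the eigenvalue and invariant-subspace perturbation steps to \cite[Lemma 3.1, Theorem 3.1]{wright1998ill} while you re-derive them from the $2\times 2$ block structure and Stewart's $\sin\Theta$ theory; just make explicit in (ii) that the $O(1)$ off-diagonal coupling against the $\Omega(\underline{\sigma})$ spectral gap shifts the small eigenvalues only by $O(\underline{\sigma}^{-1})$, which is what licenses reading them off the block $Z^\top K_\gamma Z$.
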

\begin{proof}
  We start the proof by separating the inactive constraints from the active constraints in $K_\gamma$:
  \begin{equation}
    K_\gamma = W + H_{\cinactive}^\top S_{\cinactive}^{-1} V_{\cinactive} H_{\cinactive}
    + A^\top D_\gamma A \, ,
    \quad
    \text{with} \quad D_\gamma = \begin{bmatrix} S_{\cactive}^{-1} V_{\cactive} & 0 \\ 0 & \gamma I \end{bmatrix} \; .
  \end{equation}
  By Assumption~\ref{hyp:ipm}, Lipschitz
  continuity implies that the Hessian and the inactive Jacobian
  are bounded: $W = O(1)$, $H_{\cinactive} = O(1)$.
  Proposition~\ref{prop:cond:boundslack} implies
  $s_{\cinactive} = \Theta(1)$ and $v_{\cinactive} = \Theta(\Xi)$. We deduce:
  \begin{equation}
    \label{eq:cond:inactiveblock}
    H_{\cinactive}^\top S_{\cinactive}^{-1} V_{\cinactive} H_{\cinactive} = O(\Xi) \; .
  \end{equation}
  For sufficiently small $\Xi$,
  the condensed matrix $K_\gamma$ is dominated by the block of active constraints:
  \begin{equation}
    K_\gamma = A^\top D_\gamma A + O(1) \; .
  \end{equation}
  Sufficiently close to the optimum $p^\star$, the constraints qualification
  in Assumption~\ref{hyp:ipm} implies that $A = \Theta(1)$ and has rank $\ell$.
  The eigenvalues $\{\eta_i\}_{i =1,\cdots,n}$ of $A^\top D_\gamma A$
  satisfy $\eta_i > 0$ for $i = 1,\cdots,\ell$ and $\eta_i = 0$ for $i = \ell+1, \cdots, n$.
  As $s_{\cactive} = \Theta(\Xi)$ and $v_{\cactive} = \Theta(1)$
  (Proposition~\ref{prop:cond:boundslack}), the smallest diagonal
  element in $D_\gamma$ is $\Omega(\min\{\frac{1}{\Xi}, \gamma\})$
  and the largest diagonal element is $\Theta(\max\{\frac{1}{s_{min}}, \gamma\})$.
  Hence,
  \begin{equation}
    \eta_1 = \Theta(\overline{\sigma}) \; , \quad
    \eta_\ell = \Omega(\underline{\sigma}) \; .
  \end{equation}
  Using \cite[Lemma 3.1]{wright1998ill}, we deduce $\lambda_1 = \Theta(\overline{\sigma})$
  and $\lambda_\ell = \Omega(\underline{\sigma})$, proving the first result (i).

  Let $L_\gamma := A^\top D_\gamma A$.
  We have:
  \begin{equation}
    \begin{bmatrix}
      Z^\top \\ Y^\top
    \end{bmatrix}
    L_\gamma \begin{bmatrix}Z & Y \end{bmatrix}
    = \begin{bmatrix}
      L_1 & 0 \\
      0 & L_2
    \end{bmatrix} \; ,
  \end{equation}
  where $L_1 = 0$ and $L_2 = Y^\top L_\gamma Y$.
  The smallest eigenvalue of $L_2$ is $\Omega(\underline{\sigma})$
  and the matrix $E := K_\gamma - L_\gamma$ is $O(1)$.
  By applying \cite[Theorem 3.1, (ii)]{wright1998ill},
  the $n - \ell$ smallest eigenvalues in $K_\gamma$ differ by
  $\Omega(\underline{\sigma}^{-1})$ from those of the reduced Hessian $Z^\top K_\gamma Z$.
  In addition, \eqref{eq:cond:inactiveblock} implies
  that $Z^\top K_\gamma Z - Z^\top W Z = O(\Xi)$. Using SOSC,
  $Z^\top W Z$ is positive definite for small enough $\Xi$, implying
  all its eigenvalues are $\Theta(1)$. Using again \cite[Lemma 3.1]{wright1998ill},
  we get that the $n-\ell$ smallest eigenvalues in $K_\gamma$ are $\Theta(1)$,
  proving (ii). The results in (iii) can be obtained by combining
  (i) and (ii) (provided $0 < \ell < n$).
  Finally, point (iv) directly follows from \cite[Theorem 3.1 (i)]{wright1998ill}.
\end{proof}

\begin{corollary}
  \label{corr:cond:illstructured}
  The condensed matrix $K_\gamma$ can be decomposed as
  \begin{equation}
    \label{eq:cond:svd}
    K_\gamma = U \Sigma U^\top = \begin{bmatrix} U_L & U_S \end{bmatrix}
    \begin{bmatrix}
      \Sigma_L & 0 \\ 0 & \Sigma_S
    \end{bmatrix}
    \begin{bmatrix}
      U_L^\top \\ U_S^\top
    \end{bmatrix}
    \; ,
  \end{equation}
  where $\Sigma_L = \diag(\sigma_1, \cdots, \sigma_\ell) \in \mathbb{R}^{\ell \times \ell}$ and $\Sigma_S  = \diag(\sigma_{\ell+1}, \cdots, \sigma_n)\in \mathbb{R}^{(n-\ell) \times (n-\ell)}$
  two diagonal matrices, and $U_L \in \mathbb{R}^{n \times \ell}$,
  $U_S \in \mathbb{R}^{n \times (n - \ell)}$ two
  orthogonal matrices such that $U_L^\top U_S = 0$.
  The diagonal elements in $\Sigma_S$ and $\Sigma_L$ satisfy
  \begin{equation}
    \label{eq:cond:svddiag}
    \frac{\sigma_1}{\sigma_\ell} \ll \frac{\sigma_1}{\sigma_n} \; , \quad
    \frac{\sigma_{\ell +1}}{\sigma_{n}} \ll \frac{\sigma_1}{\sigma_n} \; .
  \end{equation}
  For suitably chosen basis $Y$ and $Z$,
  spanning respectively the row space and the null space of
  the active Jacobian $A$, we get
  \begin{equation}
    \label{eq:cond:invariantsubpsace}
    U_L - Y = O(\underline{\sigma}^{-1}) \; , \quad
    U_S - Z = O(\underline{\sigma}^{-1}) \; .
  \end{equation}
\end{corollary}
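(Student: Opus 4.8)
The plan is to obtain Corollary~\ref{corr:cond:illstructured} as a reformulation of Theorem~\ref{thm:cond} in terms of the spectral decomposition of $K_\gamma$. First I would note that $K_\gamma$ is real symmetric, hence diagonalizable in an orthonormal eigenbasis; moreover, for $\Xi$ small enough, parts (i) and (ii) of Theorem~\ref{thm:cond}, together with the positive definiteness of $Z^\top W Z$ (established in the proof of (ii) from SOSC), show that \emph{all} eigenvalues of $K_\gamma$ are positive, so the eigenvalues, ordered as $\sigma_1 \geq \cdots \geq \sigma_n > 0$, coincide with the singular values. I then partition them: $\Sigma_L$ gathers $\sigma_1,\dots,\sigma_\ell$ with associated eigenvectors $U_L$, and $\Sigma_S$ gathers $\sigma_{\ell+1},\dots,\sigma_n$ with associated eigenvectors $U_S$. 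Orthonormality of the eigenvectors gives $U_L^\top U_S = 0$ and makes $\begin{bmatrix} U_L & U_S\end{bmatrix}$ orthogonal, which is precisely \eqref{eq:cond:svd}.

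Next I would deduce the scale separation \eqref{eq:cond:svddiag} directly from the magnitude estimates in Theorem~\ref{thm:cond}. The first inequality amounts to $\sigma_n \ll \sigma_\ell$: by (i), $\sigma_\ell = \Omega(\underline{\sigma})$, by (ii), $\sigma_n = \Theta(1)$, hence $\sigma_\ell/\sigma_n = \Omega(\underline{\sigma}) \gg 1$. The second inequality amounts to $\sigma_{\ell+1} \ll \sigma_1$: by (ii), $\sigma_{\ell+1} = \Theta(1)$, while by (i), $\sigma_1 = \Theta(\overline{\sigma}) \gg 1$. As a by-product, these two estimates show that the eigenvalue groups $\{\sigma_1,\dots,\sigma_\ell\}$ and $\{\sigma_{\ell+1},\dots,\sigma_n\}$ are separated by a gap of order $\underline{\sigma}$, so $\rangespace(U_L)$ and $\rangespace(U_S)$ are genuine \emph{simple} invariant subspaces of $K_\gamma$, which is what part (iv) refers to.

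Finally I would derive \eqref{eq:cond:invariantsubpsace} from Theorem~\ref{thm:cond}(iv). That result provides orthonormal matrices $\widetilde{Y}, \widetilde{Z}$ spanning, respectively, the large and small invariant subspaces of $K_\gamma$ --- i.e., $\rangespace(\widetilde{Y}) = \rangespace(U_L)$ and $\rangespace(\widetilde{Z}) = \rangespace(U_S)$ --- with $Y - \widetilde{Y} = O(\underline{\sigma}^{-1})$ and $Z - \widetilde{Z} = O(\underline{\sigma}^{-1})$, where $Y$ and $Z$ are orthonormal bases of $\rangespace(A^\top)$ and $\nullspace(A)$. Since $U_L$ and $\widetilde{Y}$ are two orthonormal bases of the same $\ell$-dimensional space, $U_L = \widetilde{Y} Q_Y$ for some orthogonal $Q_Y$; then $\bar{Y} := Y Q_Y$ is still an orthonormal basis of $\rangespace(A^\top)$ and $U_L - \bar{Y} = (\widetilde{Y}-Y)Q_Y = O(\underline{\sigma}^{-1})$. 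The same argument applied to $U_S$, $\widetilde{Z}$, $Z$ produces $\bar{Z}$ with $U_S - \bar{Z} = O(\underline{\sigma}^{-1})$, which is \eqref{eq:cond:invariantsubpsace} with $Y, Z$ chosen as $\bar{Y}, \bar{Z}$.

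I expect the main obstacle to be exactly this last step: Theorem~\ref{thm:cond}(iv) only asserts closeness of $Y$ and $Z$ to \emph{some} orthonormal bases of the invariant subspaces, not to the specific eigenvector blocks $U_L$ and $U_S$, and bridging this gap --- absorbing the arbitrary orthogonal rotation within each invariant subspace into the choice of $Y$ and $Z$ --- is precisely what forces the ``for suitably chosen basis'' phrasing in the statement. A minor point to handle carefully is the justification that $K_\gamma$ is positive definite for small $\Xi$, which makes it legitimate to treat the $\sigma_i$ as positive and equal to the singular values when writing the ratios in \eqref{eq:cond:svddiag}.
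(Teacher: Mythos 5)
Your proposal is correct and follows essentially the same route as the paper's (very terse) proof: spectral theorem for \eqref{eq:cond:svd}, the eigenvalue magnitude estimates of Theorem~\ref{thm:cond}(i)--(ii) for the separation \eqref{eq:cond:svddiag}, and part (iv) for \eqref{eq:cond:invariantsubpsace}. Your explicit handling of the orthogonal rotation needed to pass from the bases $\widetilde{Y},\widetilde{Z}$ of part (iv) to the specific eigenvector blocks $U_L,U_S$, and your remark that positive definiteness of $Z^\top W Z$ makes all eigenvalues positive, are details the paper leaves implicit but are consistent with its argument.
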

\begin{proof}
  Using the spectral theorem, we obtain the decomposition as \eqref{eq:cond:svd}.
  According to Theorem~\ref{thm:cond}, the $\ell$ largest eigenvalues of $K_\gamma$ are large
  and well separated from the $n - \ell$ smallest eigenvalues,
  establishing \eqref{eq:cond:svddiag}.
  Using Theorem \ref{thm:cond}, part (iv), we obtain the result
  in \eqref{eq:cond:invariantsubpsace}.
\end{proof}
Corollary~\ref{corr:cond:illstructured} provides deeper insight into the structure
of the condensed matrix $K_\gamma$.
Using~\eqref{eq:cond:invariantsubpsace}, we can associate
the large space of $K_\gamma$ with $\rangespace(A^\top)$
and the small space with $\nullspace(A)$.
The decomposition~\eqref{eq:cond:svd} leads to the following relations
\begin{equation}
  \label{eq:cond:boundinvariantsubspace}
  \begin{aligned}
    & \| K_\gamma \| = \| \Sigma_L \| = \Theta(\overline{\sigma}) \; , &
    \Sigma_L^{-1} = O(\underline{\sigma}^{-1})  \;, \\
    & \| K_\gamma^{-1} \| = \| \Sigma_S^{-1} \| = \Theta(1) \, , &
  \Sigma_S = \Theta(1) \, .
  \end{aligned}
\end{equation}
The conditioning of $\Sigma_L$ depends on $\cond(A)$
and the ratio $\frac{s_{max}}{s_{min}} = O(\Xi \overline{\sigma})$.
The conditioning of $\Sigma_S$ reflects the conditioning of the reduced Hessian $Z^\top W Z$.

Three observations are noteworthy:
\begin{enumerate}
  \item Theorem~\ref{thm:cond} (iii) states us that $\cond(K_\gamma) = \Theta(\overline{\sigma})$,
    meaning that if $\gamma \geq \frac{1}{s_{min}}$,
    the conditioning $\cond(K_\gamma)$ increases linearly with $\gamma$,
    recovering a known result \cite{regev2023hykkt}.
  \item In early IPM iterations, the slacks are pushed away from the boundary
    and the number of active inequality constraints is $m_a = 0$. Ill-conditioning
    in $K_\gamma$ is caused only by $\gamma G^\top G$ and $\underline{\sigma} = \overline{\sigma} = \gamma$.
  \item In late IPM iterations, the active slacks converges to $0$.
    If $\frac{1}{\Xi} \leq \gamma \leq \frac{1}{s_{min}}$ the parameter $\gamma$
    does not increase the ill-conditioning of the condensed matrix $K_\gamma$.
\end{enumerate}

\subsubsection{Numerical accuracy of the condensed matrix $K_\gamma$}
In floating-point arithmetic, the condensed matrix $K_\gamma$ is evaluated as
\begin{multline*}
  \widehat{K}_\gamma = W + \Delta W + (A + \Delta A)^\top (D_\gamma + \Delta D_\gamma) (A + \Delta A) \\
  + (H_{\cinactive} + \Delta H_{\cinactive})^\top S_{\cinactive}^{-1} V_{\cinactive} (H_{\cinactive} + \Delta H_{\cinactive}) \; ,
\end{multline*}
where $\Delta W = O(\epstol)$, $\Delta H_{\cinactive}  = O(\epstol)$, $ \Delta A  = \Theta(\epstol)$,
$\Delta D_\gamma = O(\epstol \overline{\sigma})$: most
of the errors arise from the ill-conditioned diagonal terms in $D_\gamma$.
\begin{proposition}
  \label{prop:cond:boundcondensedmatrix}
  In floating-point arithmetic, the perturbation
  of the condensed matrix $K_\gamma$ satisfies
  $\Delta K_\gamma := \widehat{K_\gamma} - K_\gamma  = O(\epstol \overline{\sigma})$.
\end{proposition}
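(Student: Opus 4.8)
The plan is to work directly from the floating-point expression for $\widehat{K}_\gamma$ displayed just before the statement, subtract the exact condensed matrix $K_\gamma = W + A^\top D_\gamma A + H_{\cinactive}^\top S_{\cinactive}^{-1} V_{\cinactive} H_{\cinactive}$, expand the perturbed products, and bound each term using the order estimates from Theorem~\ref{thm:cond} and Proposition~\ref{prop:cond:boundslack}. First I would fix the error model: assembling $K_\gamma$ in IEEE arithmetic amounts to forming two matrix products and summing three matrices, so by the standard backward error analysis of inner products each computed entry differs from its exact value by $O(\epstol)$ times the magnitude of the data involved in its evaluation, the dimension-dependent factors $n$, $m_e$, $m_a$ being absorbed into the $O(\cdot)$ notation. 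This yields exactly the componentwise bounds already stated, $\Delta W = O(\epstol)$, $\Delta H_{\cinactive} = O(\epstol)$, $\Delta A = \Theta(\epstol)$, and $\Delta D_\gamma = O(\epstol\,\overline{\sigma})$; the last one carries the factor $\overline{\sigma}$ because the active block of $D_\gamma$ has entries of magnitude $\Theta(\max\{1/s_{min},\gamma\}) = \Theta(\overline{\sigma})$ (proof of Theorem~\ref{thm:cond}), formed by a division $v_i/s_i$ or by representing $\gamma$, each of relative accuracy $\epstol$.

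Next I would expand
\[
  \Delta K_\gamma
  = \Delta W
  + \big[(A+\Delta A)^\top(D_\gamma+\Delta D_\gamma)(A+\Delta A) - A^\top D_\gamma A\big]
  + \Delta K_{\cinactive},
\]
where $\Delta K_{\cinactive}$ collects the perturbation of the inactive block. The bracketed term splits into the three first-order contributions $A^\top \Delta D_\gamma A$, $\Delta A^\top D_\gamma A$, $A^\top D_\gamma \Delta A$, plus terms that are products of at least two perturbations. Using $A = \Theta(1)$ (valid near $p^\star$ by the LICQ part of Assumption~\ref{hyp:ipm}) and $\|D_\gamma\| = \Theta(\overline{\sigma})$ (since $D_\gamma$ is diagonal with largest entry $\Theta(\overline{\sigma})$), the first contribution is $A^\top \Delta D_\gamma A = O(\epstol\,\overline{\sigma})$ — the dominant term — and the two cross terms are $O(\epstol)\cdot O(\overline{\sigma})\cdot O(1) = O(\epstol\,\overline{\sigma})$ as well. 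Every remaining term in the bracket contains two or three perturbations and is therefore $O(\epstol^2\,\overline{\sigma})$ or smaller, hence negligible. For the inactive block I would use $H_{\cinactive} = O(1)$, $\Delta H_{\cinactive} = O(\epstol)$, and $S_{\cinactive}^{-1}V_{\cinactive} = O(\Xi)$ (Proposition~\ref{prop:cond:boundslack}), so $\Delta K_{\cinactive} = O(\epstol\,\Xi)$. Collecting $\Delta W = O(\epstol)$, $\Delta K_{\cinactive} = O(\epstol\,\Xi)$, and the bracketed term $O(\epstol\,\overline{\sigma})$, and noting that $\overline{\sigma} = \max\{1/s_{min},\gamma\} \geq 1$ for $\Xi$ small enough (so that $O(\epstol)\subseteq O(\epstol\,\overline{\sigma})$ and $O(\epstol\,\Xi)\subseteq O(\epstol\,\overline{\sigma})$), gives $\Delta K_\gamma = O(\epstol\,\overline{\sigma})$.

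I do not expect a deep obstacle: the proposition is essentially a careful bookkeeping of rounding errors. The one point requiring attention is the separation of ``higher order'' from ``first order'' in the right sense --- a term such as $\Delta A^\top D_\gamma A$ is first order in $\epstol$ yet still carries the large factor $\overline{\sigma}$, so it must be retained at level $O(\epstol\,\overline{\sigma})$, whereas genuinely second-order terms like $\Delta A^\top \Delta D_\gamma A = O(\epstol^2\,\overline{\sigma})$ may be dropped. A secondary point is to check that accumulating the $O(n)$ rounding errors in the matrix products only inflates the constants hidden in $O(\cdot)$ and does not couple with $\overline{\sigma}$ beyond the single factor already exhibited in $\Delta D_\gamma$; this follows from the standard $n\epstol$-type bounds once $n\epstol \ll 1$.
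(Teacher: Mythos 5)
Your proposal is correct and follows essentially the same route as the paper: decompose $\Delta K_\gamma$ into the first-order perturbation terms $\Delta A^\top D_\gamma A$, $A^\top D_\gamma \Delta A$, $A^\top \Delta D_\gamma A$ plus the $W$- and inactive-block contributions, bound the active-Jacobian terms by $O(\epstol\,\overline{\sigma})$ using $A=\Theta(1)$ and $\|D_\gamma\|=\Theta(\overline{\sigma})$, and observe that everything else is dominated. Your treatment is slightly more explicit than the paper's (e.g., the sharper $O(\epstol\,\Xi)$ bound on the inactive block and the remark that $\overline{\sigma}\geq 1$ is what lets the $O(\epstol)$ terms be absorbed), but the argument is the same.
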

\begin{proof}
  As $A = \Theta(1)$, we have:
 $A^\top D_\gamma A = \Theta(\overline{\sigma})$ and
  $A^\top \Delta D_\gamma A = O(\epstol \overline{\sigma})$.
  Neglecting second-order terms, we get
  \begin{multline*}
    \Delta K_\gamma =
    \overbrace{\Delta W}^{O(\epstol)}
    + \overbrace{\Delta A^\top D_\gamma A}^{O(\overline{\sigma}\epstol)}
    + \overbrace{A^\top D_\gamma \Delta A}^{O(\overline{\sigma}\epstol)}
    + \overbrace{A^\top \Delta D_\gamma A}^{O(\overline{\sigma}\epstol)}  \\
    + \underbrace{\Delta H_{\cinactive} S_{\cinactive}^{-1} V_{\cinactive} H_{\cinactive}}_{O(\epstol)}
    + \underbrace{H_{\cinactive} S_{\cinactive}^{-1} V_{\cinactive} \Delta H_{\cinactive} }_{O(\epstol)}
      \; ,
  \end{multline*}
  where the terms in braces show the respective bounds on the errors.
  We deduce the error is dominated by the terms arising from the active Jacobian,
  all bounded by $O(\overline{\sigma} \epstol)$, hence concluding the proof.
\end{proof}

If sufficiently large, the unstructured perturbation $\Delta K_\gamma$
can impact the structured ill-conditioning of the perturbed matrix $\widehat{K}_\gamma$.
The smallest eigenvalue $\eta_\ell$ of $A^\top D_\gamma A$ is $\Omega(\underline{\sigma})$. As noted in
\cite[Section 3.4.2]{wright1998ill}, the perturbed matrix $\widehat{K}_\gamma$ retains the $p$ large eigenvalues
bounded below by $\underline{\sigma}$ if the perturbation is itself much smaller than the eigenvalue $\eta_\ell$:
\begin{equation}
  \label{eq:cond:perturbationbound}
  \| \Delta K_\gamma \| \ll \eta_\ell = \Omega(\underline{\sigma})  \; .
\end{equation}
The bound in Proposition~\ref{prop:cond:boundcondensedmatrix} is too loose
for \eqref{eq:cond:perturbationbound} to hold without further assumption
(we have only $\underline{\sigma} \leq \overline{\sigma}$).
We note that for some constant $C > 0$, $\Delta K_\gamma \leq C \epstol \overline{\sigma}$,
implying $\Delta K_\gamma / \underline{\sigma} \leq C \epstol \overline{\sigma} / \underline{\sigma}$.
Hence, if we suppose in addition the ratio $\overline{\sigma}/\underline{\sigma}$ is near $1$, then
$\| \Delta K_\gamma\|= O(\epstol \underline{\sigma})$, ensuring \eqref{eq:cond:perturbationbound} holds.

\subsubsection{Numerical solution of the condensed system}
We aim to estimate the relative error
in solving the system $K_\gamma x = b$ in floating
point arithmetic. We suppose $K_\gamma$ is factorized using
a backward-stable Cholesky decomposition. The computed
solution $\widehat{x}$ solves a perturbed system
$\widetilde{K}_\gamma \widehat{x} = b$, where $\widetilde{K}_\gamma
= K_\gamma + \Delta_s K_\gamma$ and $\Delta_s K_\gamma$ a symmetric matrix satisfying
\begin{equation}
  \label{eq:cond:backwardstable}
  \|\Delta_s K_\gamma\| \leq \epstol \varepsilon_n \|K_\gamma\| \;,
\end{equation}
for a small constant $\varepsilon_n$ depending on the dimension $n$.
We need the following additional assumptions to
ensure (a) the Cholesky factorization runs to completion
and (b) we can incorporate the backward-stable perturbation $\Delta_s K_\gamma$
in the generic perturbation $\Delta K_\gamma$ introduced in
Proposition~\ref{prop:cond:boundcondensedmatrix}.
\begin{assumption} Let $(p, v)$ be the current primal-dual iterate. We assume:
  \begin{itemize}
    \item[(a)] $(p, v)$ satisfies the centrality conditions~\eqref{eq:centralitycond}.
    \item[(b)] The parameter $\gamma$ satisfies $\gamma = \Theta(\Xi^{-1})$.
    \item[(c)] The duality measure is large enough relative to the precision $\epstol$: $\epstol \ll \Xi$.
    \item[(d)] The primal step $\widehat{x}$ is computed using a backward
      stable method satisfying~\eqref{eq:cond:backwardstable} for a small constant
      $\varepsilon_n$.
  \end{itemize}
  \label{hyp:cond:wellcond}
\end{assumption}
Condition (a) implies both
$s_{min} = \Theta(\Xi)$ and $s_{max} = \Theta(\Xi)$ (Proposition \ref{prop:cond:boundslack}).
Condition (b) supposes in addition $\gamma = \Theta(\Xi^{-1})$, making
the matrix $\Sigma_L$ well-conditioned with
$\underline{\sigma} = \Theta(\Xi^{-1})$,
$\overline{\sigma} = \Theta(\Xi^{-1})$ and $\overline{\sigma}/\underline{\sigma} = \Theta(1)$.
This is in line with the recommendation in \cite{golub2003solving},
  where the authors recommend chosing $\gamma = \frac{\|K\|}{\|G\|^2}$
  (as here the condition (a) implies that $\|K \| = \Theta(\Xi)$).
Condition (c) ensures that $\cond(K_\gamma) = \Theta(\overline{\sigma})$
satisfies $\cond(K_\gamma) \epstol \ll 1$
(implying the Cholesky factorization runs to completion).
Condition (d) tells us that the perturbation caused by the Cholesky
factorization is $\Delta_s K_\gamma = O(\epstol \| K_\gamma\|)$. As
\eqref{eq:cond:boundinvariantsubspace} implies $\|K_\gamma \| = \Theta(\Xi^{-1})$,
we can incorporate $\Delta_s K_\gamma$ in the perturbation
$\Delta K_\gamma$ given in Proposition~\ref{prop:cond:boundcondensedmatrix}.

We now analyze the perturbation bound for the condensed system.
Let $x$ be the solution of the linear system $K_\gamma x = b$
in exact arithmetic, and $\widehat{x}$ the solution of
the perturbed system $\widehat{K}_\gamma \widehat{x} = \widehat{b}$
in floating-point arithmetic. We bound
the error $\Delta x = \widehat{x} - x$. We
recall that every vector $x \in \mathbb{R}^n$ decomposes as
\begin{equation}
  x = U_L x_L + U_S x_S = Y x_Y + Z x_Z \; .
\end{equation}

\paragraph{Impact of right-hand-side perturbation.}
Using \eqref{eq:cond:svd}, the inverse of
$K_\gamma$ satisfies
\begin{equation}
  \label{eq:cond:inversecondensed}
  K_\gamma^{-1}  = \begin{bmatrix} U_L & U_S \end{bmatrix}
  \begin{bmatrix}
    \Sigma_L^{-1} & 0 \\ 0 & \Sigma_S^{-1}
  \end{bmatrix}
  \begin{bmatrix}
    U_L^\top \\ U_S^\top
  \end{bmatrix}
  \; .
\end{equation}
Solving the system for $\widehat{b} := b + \Delta b$,
$\Delta x = K_\gamma^{-1} \Delta b$ decomposes as
\begin{equation}
  \begin{bmatrix}
    \Delta x_L \\ \Delta x_S
  \end{bmatrix}
  =
  \begin{bmatrix}
    \Sigma_L^{-1} & 0 \\ 0 & \Sigma_S^{-1}
  \end{bmatrix}
  \begin{bmatrix}
    \Delta b_L \\ \Delta b_S
  \end{bmatrix}
  \; ,
\end{equation}
which in turn implies the following bounds:
\begin{equation}
  \label{eq:cond:rhserror}
     \| \Delta x_L \| \leq \| \Sigma_L^{-1} \| \| \Delta b_L \| \; ,\quad
    \| \Delta x_S \| \leq \| \Sigma_S^{-1} \| \| \Delta b_S \| \; .
\end{equation}
Since $\Sigma_L^{-1} = O(\Xi)$ and $\Sigma_S^{-1} = \Theta(1)$,
we deduce that the error $\Delta x_L$ is smaller by a factor
of $\Xi$ than the error $\Delta x_S$. The total error
$\Delta x = U_L \Delta x_L + U_S \Delta x_S$ is bounded by
\begin{equation}
  \label{eq:cond:rhserrorfull}
  \| \Delta x \|
  \leq  \| \Sigma_L^{-1} \| \| \Delta b_L \| + \| \Sigma_S^{-1} \| \| \Delta b_S \| =
  O(\|\Delta b \|) \; .
\end{equation}

\paragraph{Impact of matrix perturbation.}
Since $\|\Delta K_\gamma\| \ll \|K_\gamma\|$, we have that
\begin{equation}
  \label{eq:cond:invperturbed}
  \begin{aligned}
    (K_\gamma + \Delta K_\gamma)^{-1} &= (I + K_\gamma^{-1} \Delta K_\gamma)^{-1} K_\gamma^{-1} \; , \\
                                      &= K_\gamma^{-1} - K_\gamma^{-1}\Delta K_\gamma K_\gamma^{-1} + O(\|\Delta K_\gamma\|^2) \; .
  \end{aligned}
\end{equation}
Decompose $\Delta K_\gamma$ into
$\Gamma_L \in \mathbb{R}^{\ell \times n}$ and $\Gamma_S \in \mathbb{R}^{(n-\ell) \times n}$ such that
$\Delta K_\gamma = \begin{bmatrix}
  \Gamma_L \\ \Gamma _S
\end{bmatrix}$.
Using \eqref{eq:cond:inversecondensed} the first-order error is:
\begin{equation}
  \label{eq:cond:inversecondensederror}
  K_\gamma^{-1}\Delta K_\gamma K_\gamma^{-1}  =
U_L \Sigma_L^{-1} \Gamma_L \Sigma_L^{-1}U_L^\top  +
  U_S \Sigma_S^{-1} \Gamma_S \Sigma_S^{-1}U_S^\top  \;.
\end{equation}
Using \eqref{eq:cond:boundinvariantsubspace} and $(\Gamma_L, \Gamma_S)= O( \Xi^{-1}\epstol)$,
we obtain $\Sigma_L^{-1} \Gamma_L \Sigma_L^{-1} = O(\Xi \epstol)$
and $\Sigma_S^{-1} \Gamma_S \Sigma_S^{-1} = O(\Xi^{-1} \epstol)$.
We deduce that the error made in the large space is $O(\Xi\epstol)$ whereas
the error in the small space is $O(\Xi^{-1}\epstol )$.

\subsection{Solution of the condensed KKT system}
We use \eqref{eq:cond:rhserror} and \eqref{eq:cond:inversecondensederror}
to bound the error made when solving the condensed KKT system~\eqref{eq:kkt:condensed}
in floating-point arithmetic.
In all this section, we assume that
the primal-dual iterate $(p,v)$ satisfies Assumption~\ref{hyp:cond:wellcond}.
Using \cite[Corollary 3.3]{wright2001effects}, the solution $(d_x, d_y)$ of the
condensed KKT system \eqref{eq:kkt:condensed} in exact arithmetic satisfies
$(d_x, d_y) = O(\Xi)$.
In \eqref{eq:kkt:condensed}, the RHS $\bar{r}_1$ and $\bar{r}_2$
evaluate in floating-point arithmetic as
\begin{equation}
  \label{eq:cond:condensedrhs}
  \left\{
  \begin{aligned}
    \bar{r}_1 &= - \widehat{r}_1 + \widehat{H}^\top\big(\widehat{D}_{s} \widehat{r}_{4} - \widehat{r}_{2} \big) \;, \\
     \bar{r}_2 &= -\widehat{r}_3 \; .
  \end{aligned}
  \right.
\end{equation}
Using basic floating-point arithmetic, we get
$\widehat{r}_1 = r_1 + O(\epstol)$,
$\widehat{r}_3 = r_3 + O(\epstol)$,
$\widehat{r}_4 = r_4 + O(\epstol)$.
The error in the right-hand-side $r_2$ is impacted by the term $\mu S^{-1}e$:
under Assumption~\ref{hyp:cond:wellcond}, it impacts differently
the active and inactive components:
$\widehat{r}_{2,\cactive}= r_{2,\cactive} + O(\epstol)$ and
$\widehat{r}_{2,\cinactive}= r_{2,\cinactive} + O(\Xi \epstol)$.
Similarly, the diagonal matrix $\widehat{D}_s$ retains full accuracy only
w.r.t. the inactive components: $\widehat{D}_{s,\cactive} = D_{s,\cactive} + O(\Xi^{-1} \epstol)$
and $\widehat{D}_{s,\cinactive} = D_{s,\cinactive} + O(\Xi \epstol)$.

\subsubsection{Solution with HyKKT}
We analyze the accuracy achieved when we solve the condensed system~\eqref{eq:kkt:condensed}
using HyKKT,
and show that the error remains reasonable even for large values of
the regularization parameter $\gamma$.

\paragraph{Initial right-hand-side.}
Let $\widehat{s}_\gamma := \bar{r}_1 + \gamma \widehat{G}^\top \bar{r}_2$.
The initial right-hand side in~\eqref{eq:kkt:schurcomplhykkt}
is evaluated as
$\widehat{r}_\gamma :=\widehat{G} \widehat{K}_\gamma^{-1} \widehat{s}_\gamma - \bar{r}_2$.
The following proposition shows that despite an expression involving the inverse
of the ill-conditioned condensed matrix $K_\gamma$, the error made in $r_\gamma$
is bounded only by the machine precision $\epstol$.

\begin{proposition}
In floating point arithmetic, the error in the right-hand-side $\Delta \widehat{r}_\gamma$ satisfies:
\begin{equation}
  \label{eq:cond:errorrgamma}
  \Delta \widehat{r}_\gamma = -\Delta \bar{r}_2 + \widehat{G} \widehat{K}_\gamma^{-1} \Delta s_\gamma = O(\epstol) \;.
\end{equation}
\end{proposition}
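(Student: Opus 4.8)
The plan is to track how the two sources of error in $\widehat{r}_\gamma = \widehat{G}\widehat{K}_\gamma^{-1}\widehat{s}_\gamma - \bar r_2$ propagate, namely the perturbation $\Delta\bar r_2$ in the second right-hand side and the perturbation $\Delta s_\gamma$ in $\widehat s_\gamma := \bar r_1 + \gamma \widehat G^\top\bar r_2$. First I would record the crude facts already available: from \eqref{eq:cond:condensedrhs} and the rounding estimates just stated, $\bar r_2 = -\widehat r_3 = r_3 + O(\epstol)$, so $\Delta\bar r_2 = O(\epstol)$; and $\bar r_1 = -\widehat r_1 + \widehat H^\top(\widehat D_s\widehat r_4 - \widehat r_2)$ carries an $O(\Xi^{-1}\epstol)$ error on the active block (because $\widehat D_{s,\cactive} = D_{s,\cactive} + O(\Xi^{-1}\epstol)$ and $\widehat r_{4}$ has active components of size $\Theta(\Xi)$, etc.) but only $O(\epstol)$ on the inactive block. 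Crucially, under Assumption~\ref{hyp:cond:wellcond} we have $\gamma = \Theta(\Xi^{-1})$, so $\gamma\widehat G^\top\bar r_2$ contributes an $O(\gamma\epstol) = O(\Xi^{-1}\epstol)$ error. Hence a first pass gives only $\Delta s_\gamma = O(\Xi^{-1}\epstol)$, which after multiplication by $\widehat G\widehat K_\gamma^{-1}$ — with $\|K_\gamma^{-1}\| = \Theta(1)$ from \eqref{eq:cond:boundinvariantsubspace} — would yield $O(\Xi^{-1}\epstol)$, not the claimed $O(\epstol)$. So the heart of the argument must be a cancellation.

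The key step, which I expect to be the main obstacle, is to show that the $\Omega(\Xi^{-1})$-sized part of $\Delta s_\gamma$ lives in the \emph{large space} $\rangespace(A^\top)$ — equivalently, in $\rangespace(U_L)$ up to $O(\underline\sigma^{-1})$ corrections by \eqref{eq:cond:invariantsubpsace} — and is therefore damped by the factor $\Sigma_L^{-1} = O(\Xi)$ when $\widehat K_\gamma^{-1}$ is applied, per the bound \eqref{eq:cond:rhserror}. Concretely: the dangerous terms in $\Delta s_\gamma$ are (i) the $\gamma\widehat G^\top\Delta\bar r_2$ piece, which is manifestly in $\rangespace(G^\top)\subseteq\rangespace(A^\top)$ and of size $O(\gamma\epstol) = O(\Xi^{-1}\epstol)$, so $\Sigma_L^{-1}$ times it is $O(\Xi)\cdot O(\Xi^{-1}\epstol) = O(\epstol)$; and (ii) the active-Jacobian piece $\widehat H_\cactive^\top(\widehat D_{s,\cactive} - D_{s,\cactive})\widehat r_{4,\cactive}$ and the $\widehat H_\cactive^\top \Delta\widehat r_{2,\cactive}$ piece, both of which lie in $\rangespace(H_\cactive^\top)\subseteq\rangespace(A^\top)$ and are $O(\Xi^{-1}\epstol)$, hence again $O(\epstol)$ after the $\Sigma_L^{-1}$ damping. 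The remaining contributions to $\Delta s_\gamma$ — from $\Delta\widehat r_1$, from the inactive blocks of $\widehat D_s,\widehat r_2,\widehat r_4$, and from $\Delta\widehat H_\cinactive$ — are all genuinely $O(\epstol)$ in norm with no structure assumed, and $\|\widehat G\widehat K_\gamma^{-1}\| = O(1)$ handles them directly.

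To make this rigorous I would decompose $\Delta s_\gamma = \Delta s_\gamma^{L} + \Delta s_\gamma^{N}$ where $\Delta s_\gamma^{L}\in\rangespace(A^\top)$ collects the structured active terms together with the $\gamma G^\top\Delta\bar r_2$ term, and $\Delta s_\gamma^{N} = O(\epstol)$ is the unstructured remainder; then write $\widehat K_\gamma^{-1}\Delta s_\gamma^{L}$ using \eqref{eq:cond:inversecondensed}, noting that $U_S^\top\Delta s_\gamma^{L} = O(\underline\sigma^{-1}\|\Delta s_\gamma^{L}\|) = O(\Xi)\cdot O(\Xi^{-1}\epstol) = O(\epstol)$ by \eqref{eq:cond:invariantsubpsace} (the large-space vector is nearly orthogonal to $U_S$), while $U_L^\top\Delta s_\gamma^{L}$ gets multiplied by $\Sigma_L^{-1} = O(\Xi)$ to give $O(\epstol)$. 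Combining, $\widehat K_\gamma^{-1}\Delta s_\gamma = O(\epstol)$, and since $\widehat G = G + O(\epstol)$ with $G = O(1)$, we get $\widehat G\widehat K_\gamma^{-1}\Delta s_\gamma = O(\epstol)$. Adding $-\Delta\bar r_2 = O(\epstol)$ finishes the bound \eqref{eq:cond:errorrgamma}. The subtle point to get right is that $\widehat K_\gamma^{-1}$ (not the exact $K_\gamma^{-1}$) must be used, but by Assumption~\ref{hyp:cond:wellcond}(c)–(d) and Proposition~\ref{prop:cond:boundcondensedmatrix} the perturbation $\Delta K_\gamma = O(\epstol\overline\sigma)$ satisfies $\|\Delta K_\gamma\|\ll\eta_\ell$, so $\widehat K_\gamma$ shares the invariant-subspace structure of $K_\gamma$ (Corollary~\ref{corr:cond:illstructured} applies to it verbatim) and all the $\Sigma_L^{-1} = O(\Xi)$, $\|\widehat K_\gamma^{-1}\| = \Theta(1)$ estimates carry over.
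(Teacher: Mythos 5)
Your proposal is correct and follows essentially the same route as the paper's proof: the key cancellation you identify --- that the $O(\Xi^{-1}\epstol)$ part of $\Delta s_\gamma$ lies in $\rangespace(A^\top)$, hence (up to the $O(\underline{\sigma}^{-1})$ subspace rotation of \eqref{eq:cond:invariantsubpsace}) in the large space where it is damped by $\Sigma_L^{-1}=O(\Xi)$ --- is exactly the paper's argument. The only cosmetic difference is in handling $\widehat{K}_\gamma^{-1}$ versus $K_\gamma^{-1}$: the paper carries out the first-order expansion \eqref{eq:cond:invperturbed} explicitly and bounds the resulting correction term, whereas you invoke the stability of the invariant subspaces under the perturbation $\Delta K_\gamma$; both are valid.
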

\begin{proof}
Using \eqref{eq:cond:condensedrhs}, we have
\begin{equation*}
  \label{eq:cond:boundderivationhykkt}
  \begin{aligned}
  \bar{r}_1 + \gamma \widehat{G}^\top \bar{r}_2 &=
- \widehat{r}_1 + \gamma \widehat{G}^\top \widehat{r}_3+ \widehat{H}^\top\big(\widehat{D}_{s} \widehat{r}_{4} - \widehat{r}_{2} \big) \\
&=  -
\underbrace{\widehat{r}_1}_{O(\epstol)} +
\underbrace{
\widehat{H}_{\cinactive}^\top\big(\widehat{D}_{s,\cinactive} \widehat{r}_{4,\cinactive} - \widehat{r}_{2,\cinactive} \big)}_{O(\Xi \epstol)}
+ \underbrace{\widehat{A}^\top \begin{bmatrix}
  \widehat{D}_{s,\cactive} \widehat{r}_{4,\cactive} - \widehat{r}_{2,\cactive}  \\
  \gamma \widehat{r}_3
\end{bmatrix}}_{O(\Xi^{-1}\epstol)} \; .
  \end{aligned}
\end{equation*}
The error decomposes as $\Delta s_\gamma = Y \Delta s_Y  + Z \Delta s_Z
= U_L \Delta s_L + U_S \Delta s_S$.
We have $\Delta s_Y = O(\Xi^{-1} \epstol)$ and $\Delta s_Z = O(\epstol)$.
Using \eqref{eq:cond:invariantsubpsace}, we deduce
$\Delta s_L = U_L^\top \Delta s_\gamma = O(\Xi^{-1} \epstol)$ and
$\Delta s_S = U_S^\top \Delta s_\gamma = O(\epstol)$.
Using \eqref{eq:cond:boundinvariantsubspace} and \eqref{eq:cond:inversecondensed},
the error in the large space $\Delta s_L$ annihilates in the backsolve:
\begin{equation}
  \label{eq:cond:boundhykkt1}
  K_\gamma^{-1} \Delta s_\gamma = U_L \Sigma_L^{-1} \Delta s_L + U_S \Sigma_S^{-1} \Delta s_S  = O(\epstol)
  \; .
\end{equation}
Finally, using \eqref{eq:cond:invperturbed}, we get
\begin{equation}
  \widehat{G} \widehat{K}_\gamma^{-1} \Delta s_\gamma \approx
  \widehat{G} (I - K_\gamma^{-1}\Delta K_\gamma) K_\gamma^{-1} \Delta s_\gamma \; .
\end{equation}
Using \eqref{eq:cond:boundhykkt1}, the first term is $\widehat{G} K_\gamma^{-1} \Delta s_\gamma = O(\epstol)$.
We have in addition
\begin{equation}
  G K_\gamma^{-1}\Delta K_\gamma (K_\gamma^{-1} \Delta s_\gamma)  =
  \big[ G U_L \Sigma_L^{-1} \Gamma_L + G U_S \Sigma_S^{-1} \Gamma_S \big] (K_\gamma^{-1} \Delta s_\gamma) \; .
\end{equation}
Using again \eqref{eq:cond:invariantsubpsace}:
$G U_L = G Y + O(\Xi)$ and $G U_S = O(\Xi)$. Hence
$G U_L \Sigma_L^{-1} \Gamma_L = O(1)$ and $G U_S \Sigma_S^{-1} \Gamma_S = O(1)$.
Using \eqref{eq:cond:rhserrorfull}, we have $K_\gamma^{-1} \Delta G^\top = O(\epstol)$,
implying $\Delta G K_\gamma^{-1} \Delta K_\gamma (K_\gamma^{-1} \Delta s_\gamma) = O(\Xi^{-1} \epstol^{2})$.
Assumption~\ref{hyp:cond:wellcond} implies that $\Xi^{-1} \epstol^2 \ll \epstol$,
proving \eqref{eq:cond:errorrgamma}.
\end{proof}

\paragraph{Schur-complement operator.}
Solving~\eqref{eq:kkt:schurcomplhykkt}
involves the Schur complement $S_\gamma = G K_\gamma^{-1} G^\top$.
We show that the Schur complement
has a specific structure that limits accuracy loss
in the conjugate gradient algorithm.

\begin{proposition}
  Assume $(p, v)$ satisfies Assumption~\ref{hyp:cond:wellcond}.
  In exact arithmetic,
  \begin{equation}
    S_\gamma = GY \, \Sigma_L^{-1} \, Y^\top G^\top + O(\Xi^2) \; .
  \end{equation}
\end{proposition}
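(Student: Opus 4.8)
The plan is to plug the spectral factorization of $K_\gamma^{-1}$ from Corollary~\ref{corr:cond:illstructured} into $S_\gamma = G K_\gamma^{-1} G^\top$ and exploit the separation of scales between the large and small invariant subspaces. Using \eqref{eq:cond:inversecondensed}, write $K_\gamma^{-1} = U_L \Sigma_L^{-1} U_L^\top + U_S \Sigma_S^{-1} U_S^\top$, so that $S_\gamma = G U_L \Sigma_L^{-1} U_L^\top G^\top + G U_S \Sigma_S^{-1} U_S^\top G^\top$. Under Assumption~\ref{hyp:cond:wellcond}(a)--(b) we have $s_{min} = s_{max} = \Theta(\Xi)$ and $\gamma = \Theta(\Xi^{-1})$, hence $\underline{\sigma} = \overline{\sigma} = \Theta(\Xi^{-1})$; in particular \eqref{eq:cond:boundinvariantsubspace} gives $\Sigma_L^{-1} = O(\Xi)$ and $\Sigma_S^{-1} = \Theta(1)$, and \eqref{eq:cond:invariantsubpsace} gives $U_L = Y + O(\Xi)$ and $U_S = Z + O(\Xi)$.

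\textbf{Key steps.} First I would dispose of the small-space term. Since $Z$ is a basis of $\nullspace(A)$ and $G$ is a block row of the active Jacobian $A = \begin{bmatrix} H_{\cactive}^\top & G^\top \end{bmatrix}^\top$, we have $GZ = 0$; combined with $U_S - Z = O(\Xi)$ this gives $G U_S = O(\Xi)$, so $G U_S \Sigma_S^{-1} U_S^\top G^\top = O(\Xi)\,\Theta(1)\,O(\Xi) = O(\Xi^2)$. Next I would expand the large-space term using $U_L = Y + O(\Xi)$ and $GY = O(1)$: the product $G U_L \Sigma_L^{-1} U_L^\top G^\top$ equals $GY \Sigma_L^{-1} Y^\top G^\top$ plus three correction terms, each of which carries at least one factor $U_L - Y = O(\Xi)$ together with the factor $\Sigma_L^{-1} = O(\Xi)$, hence each is $O(\Xi^2)$ (the doubly-perturbed term being $O(\Xi^3)$). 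Adding the two contributions yields $S_\gamma = GY \Sigma_L^{-1} Y^\top G^\top + O(\Xi^2)$.

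\textbf{Main obstacle.} The computation is essentially routine once the invariant-subspace decomposition of Corollary~\ref{corr:cond:illstructured} is in hand; the crux is simply to recognize that $\Sigma_L^{-1}$ is \emph{small} ($O(\Xi)$) under Assumption~\ref{hyp:cond:wellcond}, which is precisely what suppresses all the cross terms, and that $G$ annihilates the small-space basis $Z$ up to the $O(\Xi)$ subspace perturbation. The only point requiring a little care is that the aggregate $O(\Xi^2)$ remainder collects several distinct error sources, so I would note that the hidden constants are uniform along the central path because all the relevant quantities ($W$, $A$, $H_{\cinactive}$, $\Sigma_S$, and $GY$) are $\Theta(1)$ there under Assumption~\ref{hyp:ipm}.
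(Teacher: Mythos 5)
Your proof is correct and follows essentially the same route as the paper: split $S_\gamma = G K_\gamma^{-1} G^\top$ via the decomposition \eqref{eq:cond:inversecondensed}, kill the small-space term using $G U_S = O(\Xi)$ (from $GZ=0$ and \eqref{eq:cond:invariantsubpsace}), and absorb the large-space cross terms into $O(\Xi^2)$ using $G U_L = GY + O(\Xi)$ together with $\Sigma_L^{-1} = O(\Xi)$. You merely make explicit two points the paper leaves implicit (why $G$ annihilates $Z$, and the enumeration of the correction terms), which is fine.
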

\begin{proof}
  Using \eqref{eq:cond:inversecondensed}, we have
  \begin{equation}
    G K_\gamma^{-1} G^\top =
    G U_L \Sigma_L^{-1} U_L^\top G^\top + G U_S \Sigma_S^{-1} U_S^\top G^\top \;.
  \end{equation}
  Using \eqref{eq:cond:invariantsubpsace}, we have $G U_L = GY + O(\Xi)$,
  and $G = O(1)$, implying
  \begin{equation}
    G U_L \Sigma_L^{-1} U_L^\top G^\top = G Y  \Sigma_L^{-1} Y^\top G^\top + O(\Xi^2) \; .
  \end{equation}
  Using again \eqref{eq:cond:invariantsubpsace}, we have $G U_S = O(\Xi)$.
  Hence, $G U_S \Sigma_S^{-1} U_S^\top G^\top = O(\Xi^2)$,
  concluding the proof.
\end{proof}
We adapt the previous proposition to bound the error made when evaluating
$\widehat{S}_\gamma$ in floating-point arithmetic.
\begin{proposition}
  Assume $(p, v)$ satisfies Assumption~\ref{hyp:cond:wellcond}.
  In floating-point arithmetic,
  \begin{equation}
    \label{eq:cond:errorSgamma}
    \widehat{S}_\gamma = S_\gamma + O(\epstol) \; .
  \end{equation}
\end{proposition}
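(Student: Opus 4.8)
The plan is to track how the three sources of floating-point error identified earlier propagate through the expression $\widehat{S}_\gamma = \widehat{G}\widehat{K}_\gamma^{-1}\widehat{G}^\top$: the error $\Delta G$ in forming the Jacobian, the error $\Delta K_\gamma$ in forming the condensed matrix, and the backward-stable error from the triangular solves with the Cholesky factors of $\widehat{K}_\gamma$. Since the previous proposition gives us the exact-arithmetic structure $S_\gamma = GY\,\Sigma_L^{-1}\,Y^\top G^\top + O(\Xi^2)$, the key point to exploit is that applying $K_\gamma^{-1}$ to any vector in $\rangespace(G^\top)$ produces something of size $O(\Xi)$: since $G^\top = O(1)$ and $GU_L = GY + O(\Xi)$ with $GU_S = O(\Xi)$, the columns of $G^\top$ decompose almost entirely into the large space $U_L$, on which $K_\gamma^{-1}$ acts like $\Sigma_L^{-1} = O(\Xi)$.

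First I would write $\widehat{S}_\gamma - S_\gamma$ as a sum of three first-order contributions (neglecting second-order terms as justified by Assumption~\ref{hyp:cond:wellcond}): $\Delta G\, K_\gamma^{-1} G^\top + G\, K_\gamma^{-1} G^\top$-type terms from the outer Jacobian perturbations, $-G K_\gamma^{-1}\Delta K_\gamma K_\gamma^{-1} G^\top$ from the matrix perturbation via the expansion~\eqref{eq:cond:invperturbed}, and the backward-stable term which, per Condition~(d) of Assumption~\ref{hyp:cond:wellcond}, can be folded into $\Delta K_\gamma$. For the outer terms, $\Delta G = \Theta(\epstol)$ while $K_\gamma^{-1} G^\top = O(\Xi)$ by the observation above, so $\Delta G\, K_\gamma^{-1} G^\top = O(\Xi\epstol) = O(\epstol)$. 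For the matrix-perturbation term, I would decompose $\Delta K_\gamma = [\Gamma_L;\Gamma_S]$ as in~\eqref{eq:cond:inversecondensederror} and use $\Sigma_L^{-1}\Gamma_L\Sigma_L^{-1} = O(\Xi\epstol)$ and $\Sigma_S^{-1}\Gamma_S\Sigma_S^{-1} = O(\Xi^{-1}\epstol)$; crucially, the $O(\Xi^{-1}\epstol)$ piece lives in the small space $U_S$, and it is multiplied on both sides by $G U_S = O(\Xi)$, giving $G U_S \Sigma_S^{-1}\Gamma_S\Sigma_S^{-1} U_S^\top G^\top = O(\Xi^2)\cdot O(\Xi^{-1}\epstol) = O(\Xi\epstol) = O(\epstol)$, while the large-space piece is $O(\Xi\epstol)$ directly. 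Collecting all contributions yields $\widehat{S}_\gamma = S_\gamma + O(\epstol)$.

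The main obstacle I anticipate is being careful about the interaction between the outer perturbations $\Delta G$ and the action of $K_\gamma^{-1}$: one must argue that $\Delta G^\top$, like $G^\top$, does not have a dangerously large component in the small space — but since $\Delta G = \Theta(\epstol)$ is tiny regardless of direction, even if $K_\gamma^{-1}\Delta G^\top$ picks up the full $\Sigma_S^{-1} = \Theta(1)$ factor it is still $O(\epstol)$, so this is benign. A second subtlety is confirming that the cross terms mixing $\Delta G$ with $\Delta K_\gamma$ (e.g. $\Delta G\, K_\gamma^{-1}\Delta K_\gamma K_\gamma^{-1} G^\top$) are genuinely higher order: these are $O(\epstol)\cdot O(\Xi^{-1}\epstol)\cdot O(\Xi) = O(\Xi^{-1}\epstol^2) \ll O(\epstol)$ by Condition~(c), so they can be dropped. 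Once these bookkeeping points are settled, the result follows from assembling the term-by-term bounds exactly as in the proof of Proposition~\ref{prop:cond:boundcondensedmatrix}.
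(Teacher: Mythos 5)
Your proposal is correct and follows essentially the same route as the paper: expand $\widehat{S}_\gamma = (G+\Delta G)(K_\gamma^{-1} - K_\gamma^{-1}\Delta K_\gamma K_\gamma^{-1})(G+\Delta G)^\top$ to first order, bound the outer $\Delta G$ terms via $\|K_\gamma^{-1}\| = \Theta(1)$ (your sharper $K_\gamma^{-1}G^\top = O(\Xi)$ observation is valid but not needed), and split $G K_\gamma^{-1}\Delta K_\gamma K_\gamma^{-1}G^\top$ along the invariant subspaces, using $GU_S = O(\Xi)$ to tame the $O(\Xi^{-1}\epstol)$ small-space contribution and $\Sigma_L^{-1} = O(\Xi)$ for the large-space one. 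Your explicit treatment of the cross terms via Condition~(c) is a point the paper handles only implicitly ("neglecting second-order errors"), but the argument is otherwise the same.
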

\begin{proof}
  We denote $\widehat{G} = G + \Delta G$ (with $\Delta G = O(\epstol)$). Then
  \begin{equation}
    \begin{aligned}
      \widehat{S}_\gamma &= \widehat{G} \widehat{K}_\gamma^{-1} \widehat{G}^\top \; , \\
                         &\approx (G + \Delta G)\big(K_\gamma^{-1} - K_\gamma^{-1}\Delta K_\gamma K_\gamma^{-1}\big)(G + \Delta G)^\top \;, \\
                    &\approx S_\gamma - G \big(K_\gamma^{-1}\Delta K_\gamma K_\gamma^{-1} \big)G^\top
                    + K_\gamma^{-1} \Delta G^\top + \Delta G K_\gamma^{-1} \; .
    \end{aligned}
  \end{equation}
  The second line is given by \eqref{eq:cond:invperturbed},
  the third by neglecting the second-order errors.
  Using \eqref{eq:cond:rhserrorfull}, we get $K_\gamma^{-1} \Delta G^\top = O(\epstol)$
  and $\Delta G K_\gamma^{-1} = O(\epstol)$.
  Using \eqref{eq:cond:inversecondensederror}, we have
  \begin{equation*}
    G \big(K_\gamma^{-1}\Delta K_\gamma K_\gamma^{-1} \big)G^\top =
G U_L \Sigma_L^{-1} \Gamma_L \Sigma_L^{-1}U_L^\top G^\top  +
G U_S \Sigma_S^{-1} \Gamma_S \Sigma_S^{-1}U_S^\top  G^\top \;.
  \end{equation*}
  Using \eqref{eq:cond:invariantsubpsace}, we have $G U_S = O(\Xi)$.
  As $\Sigma_S^{-1} = \Theta(1)$ and $\Gamma_S = O(\Xi^{-1} \epstol)$, we
  get
  $G U_S \Sigma_S^{-1} \Gamma_S \Sigma_S^{-1}U_S^\top  G^\top = O(\Xi \epstol)$.
  Finally, as $\Sigma_L^{-1} = \Theta(\Xi)$ and $G U_L = GY + O(\Xi)$,
  we have
  \begin{equation}
    G U_L \Sigma_L^{-1} \Gamma_L \Sigma_L^{-1}U_L^\top G^\top =
    G Y \Sigma_L^{-1} \Gamma_L \Sigma_L^{-1}Y^\top G^\top + O(\Xi^2 \epstol) \; .
  \end{equation}
  We conclude the proof by using
  $G Y \Sigma_L^{-1} \Gamma_L \Sigma_L^{-1}Y^\top G^\top = O(\Xi \epstol)$.
\end{proof}
The two error bounds \eqref{eq:cond:errorrgamma} and
\eqref{eq:cond:errorSgamma} ensure that we can solve
\eqref{eq:kkt:schurcomplhykkt} using a conjugate gradient
algorithm, as the errors remain limited in floating-point
arithmetic.

\subsubsection{Solution with Lifted KKT system}
The equality relaxation strategy used in LiftedKKT
removes the equality constraints from the optimization problems, simplifying
the solution of the condensed KKT system to \eqref{eq:liftedkkt}.
The active Jacobian $A$ reduces to the active inequalities $A = H_{\cactive}$,
and we recover the original setting presented in \cite{wright1998ill}.
Using the same arguments as in \eqref{eq:cond:boundderivationhykkt},
the error in the right-hand-side is bounded by $O(\Xi^{-1} \epstol)$ and is in the
range space of the active Jacobian $A$. Using \eqref{eq:cond:inversecondensed},
we can show that the absolute error on $\widehat{d}_x$ is bounded by
$O(\Xi \epstol)$. That implies the descent direction $\widehat{d}_x$ retains
full relative precision close to optimality.
In other words, we can refine the solution returned by the Cholesky solver accurately using
Richardson iterations.

\subsubsection{Summary}
Numerically, the primal-dual step $(\widehat{d}_x, \widehat{d}_y)$
is computed only with an (absolute) precision $\varepsilon_{K}$,
greater than the machine precision $\epstol$ (for HyKKT, $\varepsilon_K$
is the absolute tolerance of the \CG algorithm, for LiftedKKT the
absolute tolerance of the iterative refinement algorithm).

The errors $\widehat{d}_x - d_x = O(\varepsilon_K)$ and
$\widehat{d}_y - d_y = O(\varepsilon_K)$ propagate further in $(\widehat{d}_s, \widehat{d}_z)$.
According to \eqref{eq:kkt:condensed}, we have $\widehat{d}_s = - \widehat{r}_4 - \widehat{H} \widehat{d}_x$.
By continuity, $\widehat{H} = H + O(\epstol)$ and $\widehat{r}_4 = r_4 + O(\epstol)$, implying
\begin{equation}
  \widehat{d}_s = d_s + O(\varepsilon_K) \; .
\end{equation}
Eventually, we obtain $\widehat{d}_z = - \widehat{r}_2 - \widehat{D}_s \widehat{d}_s$,
giving the following bounds for the errors in the inactive and active components:
\begin{equation}
  \begin{aligned}
     \widehat{d}_{z,\cactive} &= -\widehat{r}_{2,\cactive} - \widehat{D}_{s,\cactive} \widehat{d}_{s,\cactive}
    = d_{z,\cactive} + O(\varepsilon_K \Xi^{-1}) \;,\\
                              \widehat{d}_{z,\cinactive} &= -\widehat{r}_{2,\cinactive} - \widehat{D}_{s,\cinactive} \widehat{d}_{s,\cinactive}
                               = d_{z,\cinactive} + O(\varepsilon_K \Xi) \; .
  \end{aligned}
\end{equation}
Most of the error occurs in the descent direction for multipliers of the active bound $\widehat{d}_{z,\cactive}$.
The impact remains limited if there are few active inequalities.

\section{Implementation}
\label{sec:num:implementation}
All our implementations use the Julia language \cite{bezanson-edelman-karpinski-shah-2017}.
We utilized our local workstation to generate the results on the CPU, which is equipped with an AMD EPYC 7443 processor (3.2GHz) with 24 processors.
For the GPU results, we used an NVIDIA A100 GPU (with CUDA 12.3) on the Polaris testbed at Argonne National Laboratory
\footnote{\url{https://www.alcf.anl.gov/polaris}}.
\add{Julia uses Just-in-Time (JIT) compilation to compile the code at runtime.
  The wall-time reported in the benchmarks does not include the compilation time.
}

\paragraph{IPM solver.}
We have implemented the two condensed-space methods in our nonlinear IPM solver MadNLP~\cite{shin2021graph}.
This implementation utilizes the {\tt AbstractKKTSystem} abstraction
in MadNLP to represent various KKT linear systems.
MadNLP can execute most of the IPM algorithm on the GPU.
In particular, all array manipulations are performed by GPU kernels, avoiding data transfers to host memory.
We refer to \cite{shin2023accelerating} for a detailed description of the GPU implementation in MadNLP.

\paragraph{Evaluation of the nonlinear models.}
For fast evaluation of the derivatives on the GPU,
we use ExaModels~\cite{shin2023accelerating}.
ExaModels harnesses the sparsity structure and provides custom derivative
kernels for repetitive algebraic subexpressions in constraints and
objective functions, enabling parallel first and second-order derivative computations on the GPU~\cite{bischof1991exploiting,enzyme2021}.
This approach caters to the SIMD architecture of the GPU by assigning multiple threads
to compute derivatives for different expression values.
ExaModels is highly efficient at evaluating problem derivatives, offering speedups often greater than 100 compared
to standard tools such as AMPL or JuMP (see Figure~\ref{fig:examodels}). As a
result, the time spent evaluating the model becomes
negligible compared to the time spent in the linear solver.
ExaModels compiles the derivative evaluation kernels specifically for a
given problem, whereas JuMP and AMPL rely on a precompiled interpreter to evaluate the model's derivatives.
However, ExaModels requires compilation time to create the derivative
evaluation kernels, which can become substantial for problems involving a
large number of heterogeneous constraints.
Throughout this paper, reported timings exclude compilation time.

\begin{figure}[!ht]
  \centering
  \includegraphics[width=.9\textwidth]{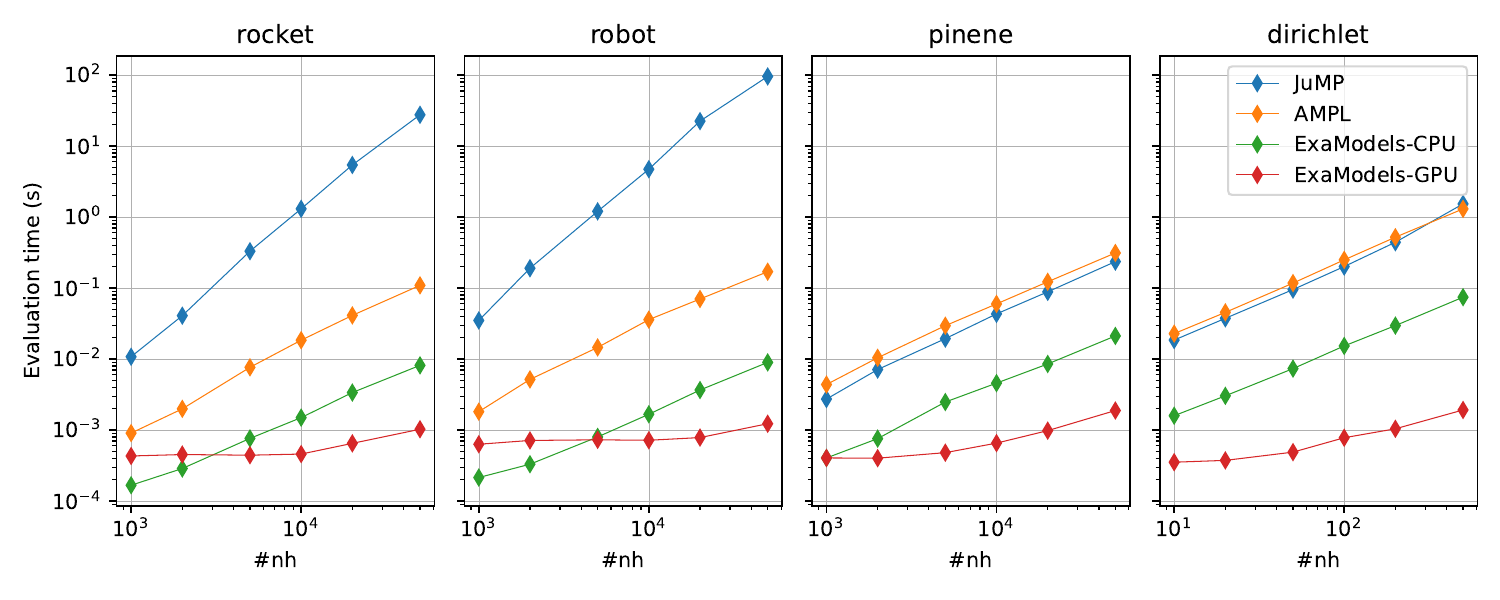}
  \caption{Time spent evaluating problem derivatives (gradient, Jacobian, Hessian)
      as the problem dimension {\tt nh} increases. This benchmark compares four
  instances from the COPS benchmark~\cite{dolan2002benchmarking}: {\tt rocket}, {\tt robot}, {\tt pinene}, and {\tt dirichlet}.}
  \label{fig:examodels}
\end{figure}

\paragraph{Linear solvers.}
The KKT systems assembled in MadNLP are solved using various sparse linear
solvers, selected based on the KKT formulation (\eqref{eq:kkt:condensed},
\eqref{eq:kkt:augmented}, \eqref{eq:kkt:unreduced}) and the device (CPU, GPU).
We use the following solvers:
\begin{itemize}
  \item {\tt HSL MA27}: Implements the multifrontal \lblt factorization on the CPU~\cite{duff1983multifrontal}.
    \add{HSL MA27 supports single-threading only.}
  \item {\tt HSL MA57}: Implements the multifrontal \lblt factorization on the CPU~\cite{duff1983multifrontal},
    using multithreaded BLAS kernels in the elimination algorithm.
  \item {\tt HSL MA86}: Implements a fine-grained multicore parallel supernodal \lblt factorization on the CPU.
  \item {\tt Panua Pardiso}: Implements shared-memory parallel supernodal Cholesky, \ldlt and \lblt factorizations on the CPU.
  \item {\tt CHOLMOD}: Implements Cholesky and \ldlt factorizations on the CPU 
    (using AMD ordering \cite{amestoy-david-duff-2004} by default).
    It factorizes the condensed matrices $K_\gamma$ and $K_\tau$ appearing
    in \eqref{eq:kkt:hykkt} and \eqref{eq:liftedkkt}, respectively.
    \add{CHOLMOD supports multithreading.}
  \item {\tt cuDSS}: Implements \llt, \ldlt and \lu decompositions on NVIDIA GPUs.
    We use \ldlt factorization to decompose the ill-conditioned condensed matrices $K_\gamma$ on the GPU,
    (here, we have observed that \ldlt is more robust than Cholesky factorization).
  \item {\tt Krylov.jl}: Contains the \CG method
    used in the Golub \& Greif strategy to solve \eqref{eq:kkt:schurcomplhykkt} on both CPU and GPU architectures.
    \add{In our experiments, Krylov.jl runs in serial on the CPU.}
\end{itemize}
CHOLMOD is included with Julia.
For HSL linear solvers, we use libHSL \cite{fowkes-lister-montoison-orban-2024} with the Julia interface HSL.jl \cite{montoison-orban-hsl-2021}.
HSL MA57 and CHOLMOD are both compiled with OpenBLAS, a multithreaded version of BLAS and LAPACK.
Krylov.jl~\cite{montoison2023krylov} provides a collection
of Krylov methods with a polymorphic implementation suitable for both
CPU and GPU architectures.

\section{Numerical results}
First, we assess in \S\ref{sec:num:pprof} the performance of the two hybrid solvers, LiftedKKT and HyKKT.
The GPU implementation is compared with state-of-the-art CPU-based solvers.
Then, we present results for the PGLIB OPF benchmark in \S\ref{sec:num:opf},
 complemented by the CUTEst benchmark in \S\ref{sec:num:cutest}.
\add{All results have been generated using double precision on both the CPU and GPU.}

\subsection{Performance analysis of HyKKT and LiftedKKT on a large-scale instance}
\label{sec:num:pprof}
We evaluate each KKT solver's performance on a large-scale OPF instance from
the PGLIB benchmark~\cite{babaeinejadsarookolaee2019power}: {\tt 78484epigrids}.
Our formulation with ExaModels comprises 674,562 variables, 661,017 equality constraints, and 378,045 inequality constraints.
Our previous work has pointed out that once the OPF model is evaluated
on the GPU using ExaModels, the time spent on automatic differentiation (AD) becomes negligible, leaving
the KKT solver as the primary bottleneck~\cite{shin2023accelerating}.

\subsubsection{Performance comparison of linear solvers on CPUs and GPUs}
We evaluate the performance of the {\tt cuDSS} solver when
factorizing the matrix $K_{\gamma}$ at the first IPM iteration (with
$\gamma = 10^7$). We compare the symbolic analysis,
factorization, and triangular solve times with those reported in CHOLMOD
(single-threaded), HSL MA86, and Panua Pardiso with sparse Cholesky.
The CPU implementations achieved maximum efficiency when configured with 8 threads;
therefore, we consistently present performance results
for parallel CPU solvers using 8 threads throughout this section.

The results are displayed in Table~\ref{tab:linsol:time}.
We benchmark the three decompositions implemented in {\tt cuDSS} (\llt, \ldlt, \lu).
We observe that the analysis phase in {\tt cuDSS} is four times slower than in CHOLMOD.
The ordering phase of symbolic analysis is performed on the CPU, and thus does not benefit from GPU parallelism \cite{nvidiaNVIDIACuDSSPreview}. Fortunately, the analysis can be performed only once, and the symbolic factorization can be reused if the optimization problem is re-solved with the same sparsity structure.
Numerical factorization in {\tt cuDSS} is approximately twice as fast as in HSL MA86, with timings almost independent of the factorization used.

\begin{table}[!ht]
  \centering
  \resizebox{.7\textwidth}{!}{
    \begin{tabular}{lrrr}
      \toprule
      & analysis (s)    & factorization (s) & backsolve (s) \\
      \midrule
      CHOLMOD        & $1.37\times 10^{0}$ & $7.44\times 10^{-1}    $ & $5.20\times 10^{-2}$ \\
      PARDISO        & $4.52\times 10^{0}$ & $2.94\times 10^{-1}    $ & $8.94\times 10^{-2}$ \\
      MA86           & $3.88\times 10^{0}$ & $1.38\times 10^{-1}    $ & $5.50\times 10^{-2}$ \\
      {\tt cuDSS}-cholesky & $3.63\times 10^{0}$ & $4.05\times 10^{-2}    $ & $7.22\times 10^{-3}$ \\
      {\tt cuDSS}-ldl      & $3.57\times 10^{0}$ & $4.21\times 10^{-2}    $ & $6.35\times 10^{-3}$ \\
      {\tt cuDSS}-lu       & $3.55\times 10^{0}$ & $6.28\times 10^{-2}    $ & $6.42\times 10^{-3}$ \\
      \bottomrule
    \end{tabular}
  }
  \caption{Performance comparison of {\tt cuDSS} with CHOLMOD, Pardiso, and HSL MA86
      (all running in parallel using 8 threads).
    The column for ``analysis'' shows the time spent on symbolic analysis (computation
    of a reordering to minimize fill-in and construction of the elimination tree).
    The columns for ``factorization'' and ``backsolves'' show the time spent on numerical
    factorization and the subsequent triangular solve, respectively.
    The matrix $K_\gamma$ is symmetric positive definite, with
    a size $n = 674,562$ and is extremely sparse, with only $7,342,680$ non-zero entries ($0.002$\%).
  }
  \label{tab:linsol:time}
\end{table}

\subsubsection{Tuning the Golub \& Greif strategy}
\label{sec:num:tuninghykkt}

We observe that as the regularization $\gamma$ increases, the \CG
algorithm converges faster. However, this comes at the cost of reduced accuracy.
Figure~\ref{fig:hybrid:gamma} depicts the evolution of the number of \CG
iterations and relative accuracy as $\gamma$ increases from $10^4$ to $10^8$ in HyKKT.
The number of \CG iterations decreases tenfold as $\gamma$ increases, but the relative
residual norm increases linearly with $\gamma$.
Thus, a trade-off exists between \CG execution time and solution accuracy.

The table in Figure~\ref{fig:hybrid:gamma} compares the IPM solution time on
the CPU (using Pardiso) and on the GPU (using {\tt cuDSS}).
Overall, {\tt cuDSS} is faster than Pardiso, leading to a 4x-8x speedup in
total IPM solution time. We also note that the assembly of the condensed
matrix $K_\gamma$ parallelizes effectively on the GPU, with assembly time reduced
from $\approx 8$s on the CPU to $\approx 0.2$s on the GPU
\add{(the assembly of the KKT system is not parallelized on the CPU)}.

\begin{figure}[!ht]
  \centering
  \resizebox{\textwidth}{!}{
  \begin{tabular}{|r|rrrr >{\bfseries}r|rrrr >{\bfseries}r|}
  \hline
  & \multicolumn{5}{c|}{\bf Pardiso (CPU)} & \multicolumn{5}{c|}{\bf cuDSS-\ldlt (CUDA)} \\
  \hline
  $\gamma$ & \# it & cond. (s) & \CG (s) & linsol (s) & IPM (s) & \# it & cond. (s) & \CG (s) & linsol (s) & IPM (s) \\
  \hline
  $10^4$ & 96 & 8.1 & 562.1 & 599.8 & 639.2 & 96 & 0.17 & 113.27 & 114.52 & 124.00 \\
  $10^5$ & 96 & 8.5 & 212.0 & 249.9 & 290.0 & 96 & 0.17 & 53.37  & 54.62  & 64.39  \\
  $10^6$ & 96 & 8.6 & 96.7  & 134.6 & 174.6 & 96 & 0.17 & 14.53  & 15.78  & 25.39  \\
  $10^7$ & 96 & 8.6 & 52.9  & 91.0  & 130.5 & 96 & 0.17 & 7.95   & 9.20   & 18.41  \\
  $10^8$ & 96 & 8.6 & 35.1  & 73.2  & 113.3 & 96 & 0.17 & 5.36   & 6.62   & 15.90  \\
  \hline
  \end{tabular}
  }
  \includegraphics[width=\textwidth]{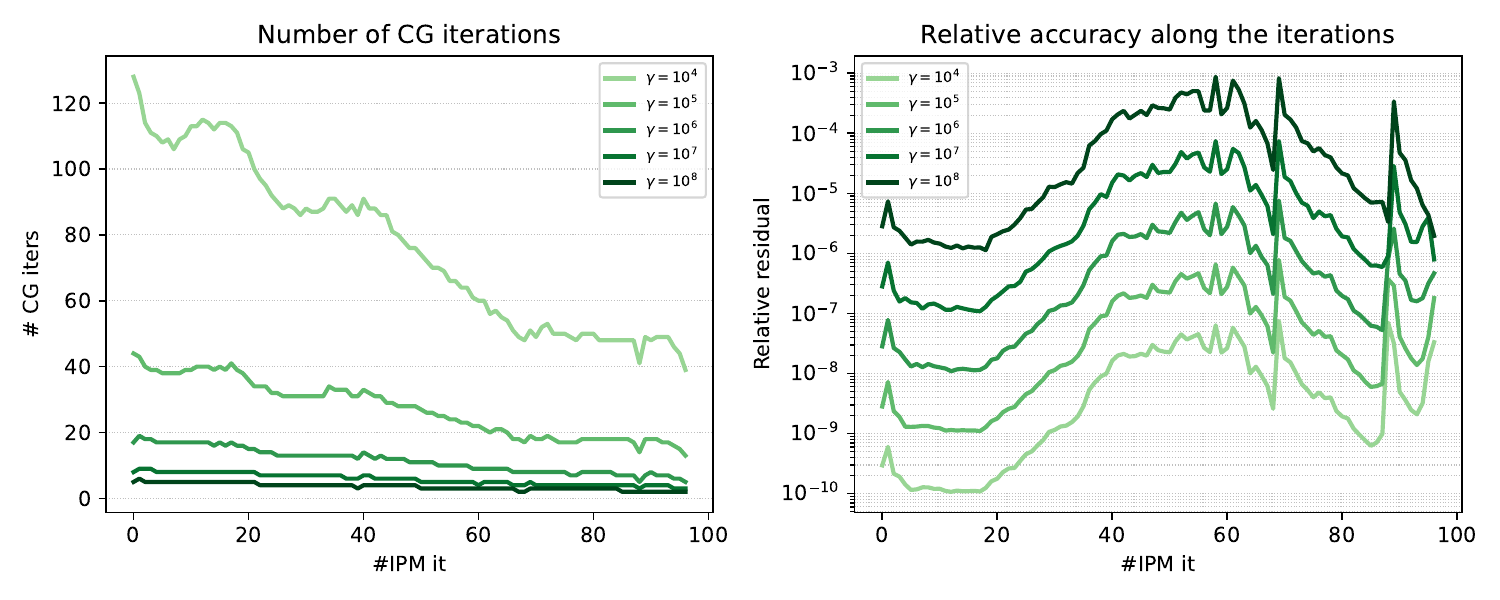}
  \caption{
    Above: Breakdown of IPM solution time across
    (a) condensation (cond.), (b) \CG (c) total linear solver
    (linsol.) and (d) total IPM solver time
    (IPM). Performance of {\tt cuDSS} is compared with Pardiso (8 threads on CPU).
    Below: Impact of $\gamma$ on the total number of \CG iterations
    and the relative residual norm at each IPM iteration.
    The peak in the relative residual norm corresponds
    to primal-dual regularizations within the IPM algorithm,
    applied when $K_\gamma$ is not positive definite.
    The {\tt it} column shows the number of iterations,
    while {\tt cond.}, {\tt CG}, {\tt linsol} and {\tt IPM}
    columns show the time spent on assembling the condensed KKT matrix,
    the time spent in the conjugate gradient, the time spent in the linear
    solver (including CG) and the IPM algorithm, respectively.
    \label{fig:hybrid:gamma}
  }
\end{figure}

\subsubsection{Tuning the equality relaxation strategy}
We now analyze the numerical performance of LiftedKKT (\S\ref{sec:kkt:sckkt}).
This method solves the KKT system~\eqref{eq:liftedkkt} using a direct solver.
The parameter $\tau$ used in the equality relaxation~\eqref{eq:problemrelaxation}
is set to the IPM tolerance $\varepsilon_{tol}$ (in practice, there is little benefit
to setting $\tau$ below $\varepsilon_{tol}$ as
inequality constraints are only satisfied to within $\pm \varepsilon_{tol}$ in the IPM).

We compare in Table~\ref{tab:sckkt:performance} the performance obtained by LiftedKKT
as we decrease the IPM tolerance $\varepsilon_{tol}$.
We present runtimes for both the CPU (Pardiso with Cholesky) and the GPU ({\tt cuDSS}-\ldlt).
Slacks associated with relaxed equality constraints converge to values below $2 \tau$,
leading to highly ill-conditioned terms in the diagonal matrices $D_s$.
Consequently, the conditioning of the matrix $K_\tau$ in \eqref{eq:liftedkkt} can exceed
$10^{18}$, leading to a nearly singular linear system.
However, the factorization succeeds and the loss of accuracy caused by the ill-conditioning is tamed by the multiple
Richardson iterations in the iterative refinement, maintaining relative
residual accuracy at an acceptable level.
As a result, {\tt cuDSS} solves
the problem to optimality in $\approx 20$s, a time comparable to HyKKT (see Figure~\ref{fig:hybrid:gamma}).
Updating $\tau$ changes the total number of IPM iterations,
as a different problem~\eqref{eq:problemrelaxation} is solved for each $\tau$ value.

\begin{table}[!ht]
  \centering
  \resizebox{.7\textwidth}{!}{
  \begin{tabular}{|l|rr|rr|r|}
    \hline
    & \multicolumn{2}{c|}{\bf Pardiso-\ldlt (CPU)} & \multicolumn{2}{c|}{\bf cuDSS-\ldlt (CUDA)}& \\
    \hline
    $\varepsilon_{tol}$ & \#it & time (s)&  \#it & time (s) & accuracy \\
    \hline
    $10^{-4}$ &127 & 164.9 & 114 & 19.9 & $1.2 \times 10^{-2}$ \\
    $10^{-5}$ &129 & 168.1 & 113 & 30.4 & $1.2 \times 10^{-3}$ \\
    $10^{-6}$ &200 & 301.7 & 109 & 25.0 & $1.2 \times 10^{-4}$ \\
    $10^{-7}$ &110 & 159.2 & 104 & 20.1 & $1.2 \times 10^{-5}$ \\
    $10^{-8}$ &111 & 156.9 & 105 & 20.3 & $1.2 \times 10^{-6}$ \\
    \hline
  \end{tabular}
  }
  \caption{Performance of the equality-relaxation
    strategy as the IPM tolerance $\varepsilon_{tol}$ decreases.
    Runtimes are shown for CPU (using Pardiso-\ldlt)
    and GPU ({\tt cuDSS}-\ldlt). \add{Accuracy is defined
    as the residual norm $\|K x - b\|_\infty$, where $x$ is the
    solution returned by LiftedKKT.}
  \label{tab:sckkt:performance}
  }
\end{table}

\subsubsection{Breakdown of the time spent in one IPM iteration}
We decompose the time spent in a single IPM iteration for LiftedKKT and HyKKT.
As a reference running on the CPU, we use HSL MA27.
The factorization time for HSL MA57 is 1.9s, compared to 1.2s for HSL MA27
(increasing the number of threads in the BLAS backend used by HSL MA57 yielded no improvement).
Thus, the block elimination algorithm in HSL MA57 is not beneficial here.

KKT solution time is divided into:
(1) assembling the KKT system,
(2) factorizing the KKT system, and (3) computing the descent direction via triangular solves.
As shown in Figure~\ref{fig:timebreakdown}, KKT construction represents only
a small fraction of the total time, compared to factorization and triangular solves.
LiftedKKT and HyKKT lead to faster factorization times
as the condensed KKT system can be efficiently factorized in parallel. HSL MA86
factorizes the condensed system nearly 10 times faster than HSL MA27, and
{\tt cuDSS} provides an additional 4x speed-up.
We focus on comparing parallel CPU solvers (MA86 and Pardiso) for Lifted and HyKKT,
as the factorization of the augmented KKT system~\eqref{eq:kkt:augmented}
does not parallelize well on CPUs~\cite{tasseff2019exploring}.
After factorization, LiftedKKT computes the descent direction faster than HyKKT
(0.01s versus 0.07s) because HyKKT must run the \CG algorithm to completion to solve the Schur complement
system~\eqref{eq:kkt:schurcomplhykkt}, requiring additional backsolves in the linear solver.

\begin{figure}[!ht]
  \centering
  \resizebox{.8\textwidth}{!}{
    \begin{tabular}{|ll|rrr|r|}
      \hline
      linear system & linear solver & build (s) & factorize (s) & backsolve (s) & total (s) \\
       \hline
      \eqref{eq:kkt:augmented} & HSL MA27          & $2.85 \times 10^{-2}$ & $1.23 \times 10^{0\phantom{-}}$  & $3.48 \times 10^{-1}$ & $1.61 \times 10^{0\phantom{-}}$ \\
      \hline
      LiftedKKT \eqref{eq:liftedkkt} & CHOLMOD & $8.55 \times 10^{-2}$ & $5.84\times10^{-1}$ & $1.17\times10^{-1}$ & $7.86 \times 10^{-1}$ \\
      LiftedKKT \eqref{eq:liftedkkt} & pardiso & $8.16 \times 10^{-2}$ & $2.23\times10^{-1}$ & $1.38\times10^{-1}$ & $4.43 \times 10^{-1}$ \\
      LiftedKKT \eqref{eq:liftedkkt} & MA86    & $8.57 \times 10^{-2}$ & $1.36\times10^{-1}$ & $1.18\times10^{-1}$ & $3.39 \times 10^{-1}$ \\
      LiftedKKT \eqref{eq:liftedkkt} & {\tt cuDSS}   & $2.26 \times 10^{-3}$ & $4.91\times10^{-2}$ & $1.09\times10^{-2}$ & $6.23 \times 10^{-2}$ \\
      \hline
      HyKKT \eqref{eq:kkt:hykkt} & CHOLMOD     & $8.01 \times 10^{-2}$ & $5.89\times10^{-1}$ & $7.22\times10^{-1}$ & $ 1.39 \times 10^{0\phantom{-}}$ \\
      HyKKT \eqref{eq:kkt:hykkt} & pardiso     & $9.06 \times 10^{-2}$ & $2.21\times10^{-1}$ & $9.19\times10^{-1}$ & $1.23 \times 10^{0\phantom{-}}$\\
      HyKKT \eqref{eq:kkt:hykkt} & MA86        & $8.60 \times 10^{-2}$ & $1.35\times10^{-1}$ & $7.50\times10^{-1}$ & $9.71 \times 10^{-1}$ \\
      HyKKT \eqref{eq:kkt:hykkt} & {\tt cuDSS}       & $1.98 \times 10^{-3}$ & $3.38\times10^{-2}$ & $6.66\times10^{-2}$ & $1.02 \times 10^{-1}$ \\
      \hline
    \end{tabular}
  }
  \includegraphics[width=.9\textwidth]{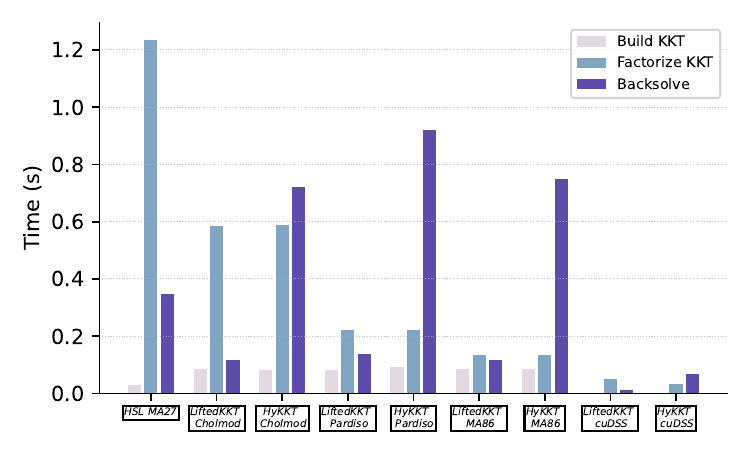}
  \caption{Breakdown of the time spent in one IPM iteration
    for different linear solvers on {\tt 78484epigrids}.
    HSL MA27 factorizing the original augmented KKT system \eqref{eq:kkt:augmented}
    is used as a reference.
    The column for {\tt build} shows the KKT matrix assembly time,
    whereas the columns for {\tt factorize} and {\tt backsolve} display the time
    spent in the numerical factorization and in the triangular solves, respectively.
    Pardiso and HSL MA86 use 8 threads.
  \label{fig:timebreakdown}}
\end{figure}

\subsection{Benchmark on OPF instances}
\label{sec:num:opf}

We benchmark several OPF instances from the PGLIB benchmark~\cite{babaeinejadsarookolaee2019power},
comparing the GPU-based LiftedKKT and HyKKT with the CPU-based augmented KKT system using HSL MA27.
For IPM solvers, we use the MadNLP solver~\cite{shin2021graph} with ExaModels.
We set the IPM tolerance to $10^{-6}$.
Regarding HyKKT, we set $\gamma = 10^7$ based on the analysis in \S\ref{sec:num:tuninghykkt}.
Dolan \& Moré performance profiles~\cite{dolan2002benchmarking} are shown in Figure~\ref{fig:opf:pprof}.
\add{The x-axis displays the performance ratio, measuring how far a given solver is from
  the best solver on each problem.
  The y-axis shows the fraction of test problems solved within a given factor of the best solver.
  The higher the curve, the better the solver.
}
The performance profile on the left compares LiftedKKT and HyKKT with
MadNLP+Ma27. As a baseline, we include the performance obtained by Ipopt and
Knitro, which also run with HSL MA27. CPU performance for HSL MA27
is consistent with~\cite{babaeinejadsarookolaee2019power}.
The right profile focuses on LiftedKKT and HyKKT using
various parallel linear solvers.
For the two condensed strategies, {\tt cuDSS} provides the fastest solution time compared to
Pardiso and HSL MA86 running in parallel on the CPU using 8 threads.

The detailed results are displayed in Table~\ref{tab:opf:benchmark}:
The table displays the time spent on initialization, the time spent in the
linear solver, and the total solving time. On GPUs, LiftedKKT+{\tt cuDSS} is
faster than HyKKT+{\tt cuDSS} on small and medium instances as it avoids the \CG algorithm
at each IPM iteration.
Both outperform HSL MA27 significantly. HyKKT+{\tt cuDSS} is slower
on {\tt 8387\_pegase}: the
parameter $\gamma$ is not set high enough to reduce the total number of \CG
iterations, leading to a 4x slowdown compared to LiftedKKT+{\tt cuDSS}.
Nevertheless, HyKKT+{\tt cuDSS} performance improves on larger
instances, reaching an 8x speedup over HSL MA27.

The benchmark presented in Table~\ref{tab:opf:benchmark} was generated using NVIDIA A100 GPUs (80GB, current selling price: \$10k).
We have also compared the performance
with cheaper GPUs: a NVIDIA A1000 (a laptop-based GPU, 4GB memory) and a NVIDIA A30
(24GB memory, price: \$5k).
As a comparison, the selling price of the AMD EPYC
7443 processor used for the benchmark on the CPU is \$1.2k. The results are
displayed in Figure~\ref{fig:gpubench}. We observe that the performance of the A30
and the A100 is similar, and the cheaper A1000 GPU is already
faster than HSL MA27 running on the CPU.

OPF problems possess a specific sparsity
structure that results in highly sparse KKT
systems~\eqref{eq:kkt:augmented}. The super-sparse structure explains why HSL
MA27 yields better results than HSL MA57 (which can utilize parallelism at
the BLAS level) as previously reported in \cite{tasseff2019exploring}.
Thus, OPF results may not represent general
nonlinear programs. In the next section, we analyze HyKKT and LiftedKKT
performance on the CUTEst benchmark.

\begin{table}[!ht]
  \centering
  \resizebox{\textwidth}{!}{
    \add{
\begin{tabular}{|l|rr|rrrrr>{\bfseries}r|rrrrr>{\bfseries}r|rrrrr>{\bfseries}r|}
  \hline
  & & & \multicolumn{6}{c|}{\bf Ipopt+HSL MA27} &
    \multicolumn{6}{c|}{\bf MadNLP+LiftedKKT+Pardiso} &
    \multicolumn{6}{c|}{\bf MadNLP+HyKKT+Pardiso} \\
                                                    && & \multicolumn{6}{c|}{\bf (CPU)} &
    \multicolumn{6}{c|}{\bf (CPU)} &
    \multicolumn{6}{c|}{\bf (GPU)} \\
  \hline
  & n & m & it & init & AD & lin & time/it & total & it & init & AD & lin & time/it & total & it & init & AD & lin & time/it & total \\
  \hline
1354\_pegase & 11192 & 16646 & 47 & - & - & - & 0.01 & 0.47 & 51 & 0.06 & 0.04 & 0.69 & 0.02 & 1.00 & 48 & 0.06 & 0.04 & 0.45 & 0.01 & 0.68 \\
2000\_goc & 19008 & 29432 & 42 & - & - & - & 0.02 & 0.84 & 40 & 0.11 & 0.06 & 1.05 & 0.04 & 1.53 & 42 & 0.12 & 0.07 & 0.61 & 0.02 & 1.03 \\
2312\_goc & 17128 & 25716 & 44 & - & - & - & 0.02 & 0.75 & 43 & 0.09 & 0.06 & 0.36 & 0.02 & 0.77 & 44 & 0.09 & 0.06 & 0.70 & 0.02 & 1.06 \\
2742\_goc & 24540 & 38196 & 104 & - & - & - & 0.04 & 3.75 & 238 & 0.15 & 0.57 & 5.01 & 0.05 & 10.72 & 91 & 0.15 & 0.27 & 3.27 & 0.08 & 6.86 \\
2869\_pegase & 25086 & 37813 & 57 & - & - & - & 0.03 & 1.50 & 57 & 0.14 & 0.12 & 0.71 & 0.03 & 1.49 & 55 & 0.15 & 0.11 & 1.23 & 0.03 & 1.85 \\
  \hline
3022\_goc & 23238 & 34990 & 55 & - & - & - & 0.02 & 1.25 & 48 & 0.13 & 0.09 & 0.71 & 0.03 & 1.34 & 55 & 0.13 & 0.10 & 1.03 & 0.03 & 1.61 \\
3970\_goc & 35270 & 54428 & 64 & - & - & - & 0.05 & 3.09 & 47 & 0.22 & 0.14 & 1.30 & 0.05 & 2.54 & 48 & 0.26 & 0.14 & 1.35 & 0.05 & 2.45 \\
4020\_goc & 36696 & 56957 & 60 & - & - & - & 0.07 & 4.37 & 59 & 0.23 & 0.18 & 1.78 & 0.06 & 3.32 & 59 & 0.24 & 0.19 & 2.21 & 0.06 & 3.53 \\
4601\_goc & 38814 & 59596 & 72 & - & - & - & 0.05 & 3.81 & 66 & 0.23 & 0.21 & 1.60 & 0.05 & 3.31 & 71 & 0.25 & 0.23 & 2.24 & 0.05 & 3.75 \\
4619\_goc & 42532 & 66289 & 49 & - & - & - & 0.08 & 3.72 & 53 & 0.28 & 0.19 & 2.11 & 0.07 & 3.71 & 49 & 0.29 & 0.18 & 2.01 & 0.07 & 3.36 \\
  \hline
4837\_goc & 41398 & 64030 & 58 & - & - & - & 0.05 & 3.11 & 55 & 0.25 & 0.19 & 1.28 & 0.05 & 2.88 & 59 & 0.26 & 0.21 & 2.04 & 0.06 & 3.48 \\
4917\_goc & 37872 & 56917 & 62 & - & - & - & 0.04 & 2.49 & 70 & 0.22 & 0.23 & 2.52 & 0.06 & 4.31 & 63 & 0.23 & 0.20 & 1.85 & 0.05 & 3.21 \\
5658\_epigrids & 48552 & 74821 & 50 & - & - & - & 0.07 & 3.44 & 59 & 0.30 & 0.25 & 2.00 & 0.07 & 4.01 & 51 & 0.35 & 0.22 & 2.17 & 0.07 & 3.74 \\
7336\_epigrids & 62116 & 95306 & 48 & - & - & - & 0.09 & 4.24 & 53 & 0.44 & 0.28 & 2.02 & 0.08 & 4.31 & 50 & 0.69 & 0.26 & 2.46 & 0.09 & 4.34 \\
8387\_pegase & 78748 & 118702 & 75 & - & - & - & 0.10 & 7.13 & 85 & 0.57 & 0.59 & 3.49 & 0.09 & 7.69 & 75 & 0.86 & 0.53 & 46.94 & 0.67 & 50.11 \\
  \hline
9241\_pegase & 85568 & 130826 & 70 & - & - & - & 0.11 & 7.68 & - & - & - & - & - & - & 70 & 0.89 & 0.53 & 8.14 & 0.17 & 11.68 \\
9591\_goc & 83572 & 130588 & 68 & - & - & - & 0.18 & 12.36 & 65 & 0.87 & 0.48 & 4.11 & 0.12 & 7.98 & 67 & 0.62 & 0.51 & 5.78 & 0.13 & 8.86 \\
10000\_goc & 76804 & 112352 & 81 & - & - & - & 0.11 & 9.17 & 61 & 0.50 & 0.38 & 5.29 & 0.14 & 8.26 & 82 & 0.85 & 0.51 & 4.88 & 0.10 & 8.21 \\
10192\_epigrids & 89850 & 139456 & 58 & - & - & - & 0.17 & 9.70 & 55 & 0.64 & 0.48 & 3.31 & 0.13 & 6.94 & 54 & 0.94 & 0.47 & 5.24 & 0.15 & 8.26 \\
10480\_goc & 96750 & 150874 & 68 & - & - & - & 0.19 & 13.18 & 67 & 0.70 & 0.58 & 5.17 & 0.14 & 9.40 & 71 & 0.72 & 0.62 & 8.14 & 0.17 & 11.85 \\
  \hline
13659\_pegase & 117370 & 170588 & 64 & - & - & - & 0.14 & 9.14 & 77 & 0.86 & 0.79 & 6.77 & 0.16 & 12.47 & 62 & 1.19 & 0.62 & 7.26 & 0.18 & 11.21 \\
19402\_goc & 179562 & 281733 & 69 & - & - & - & 0.50 & 34.56 & 69 & 1.74 & 1.26 & 11.14 & 0.29 & 20.03 & 69 & 2.03 & 1.25 & 17.88 & 0.37 & 25.34 \\
20758\_epigrids & 179236 & 274918 & 48 & - & - & - & 0.33 & 15.83 & 57 & 1.44 & 1.02 & 6.59 & 0.25 & 14.00 & 51 & 1.95 & 0.93 & 12.80 & 0.37 & 18.93 \\
30000\_goc & 208624 & 307752 & 150 & - & - & - & 0.34 & 50.43 & 156 & 1.60 & 3.51 & 30.27 & 0.32 & 50.05 & 223 & 1.63 & 4.77 & 48.31 & 0.32 & 71.06 \\
78484\_epigrids & 674562 & 1039062 & 102 & - & - & - & 1.95 & 198.54 & 103 & 7.30 & 8.18 & 43.05 & 0.95 & 97.44 & 102 & 7.37 & 8.16 & 96.61 & 1.36 & 139.00 \\
  \hline
  \hline
  & & & \multicolumn{6}{c|}{\bf MadNLP+HSL MA27} &
    \multicolumn{6}{c|}{\bf MadNLP+LiftedKKT+cuDSS} &
    \multicolumn{6}{c|}{\bf MadNLP+HyKKT+cuDSS} \\
                                                    && & \multicolumn{6}{c|}{\bf (CPU)} &
    \multicolumn{6}{c|}{\bf (GPU)} &
    \multicolumn{6}{c|}{\bf (GPU)} \\
  \hline
  & n & m & it & init & AD & lin & time/it & total & it & init & AD & lin & time/it & total & it & init & AD & lin & time/it & total \\
  \hline
1354\_pegase & 11192 & 16646 & 48 & 0.01 & 0.04 & 0.33 & 0.01 & 0.46 & 71 & 0.08 & 0.06 & 0.66 & 0.01 & 1.05 & 48 & 0.08 & 0.06 & 0.24 & 0.01 & 0.56 \\
2000\_goc & 19008 & 29432 & 42 & 0.03 & 0.06 & 0.63 & 0.02 & 0.86 & 48 & 0.12 & 0.04 & 0.43 & 0.02 & 0.78 & 42 & 0.12 & 0.06 & 0.23 & 0.01 & 0.57 \\
2312\_goc & 17128 & 25716 & 44 & 0.02 & 0.05 & 0.57 & 0.02 & 0.77 & 63 & 0.12 & 0.05 & 0.45 & 0.01 & 0.90 & 44 & 0.11 & 0.06 & 0.32 & 0.02 & 0.67 \\
2742\_goc & 24540 & 38196 & 122 & 0.04 & 0.28 & 3.55 & 0.06 & 6.90 & 168 & 0.15 & 0.20 & 2.38 & 0.02 & 3.91 & 91 & 0.15 & 0.26 & 1.24 & 0.03 & 2.44 \\
2869\_pegase & 25086 & 37813 & 55 & 0.05 & 0.11 & 1.00 & 0.03 & 1.42 & 57 & 0.15 & 0.05 & 0.33 & 0.01 & 0.80 & 55 & 0.14 & 0.08 & 0.40 & 0.02 & 0.88 \\
  \hline
3022\_goc & 23238 & 34990 & 55 & 0.03 & 0.10 & 0.92 & 0.02 & 1.26 & 53 & 0.13 & 0.06 & 0.30 & 0.01 & 0.74 & 55 & 0.13 & 0.07 & 0.37 & 0.01 & 0.81 \\
3970\_goc & 35270 & 54428 & 48 & 0.06 & 0.14 & 1.89 & 0.06 & 2.64 & 48 & 0.22 & 0.05 & 0.49 & 0.02 & 1.00 & 48 & 0.20 & 0.09 & 0.40 & 0.02 & 0.92 \\
4020\_goc & 36696 & 56957 & 59 & 0.06 & 0.18 & 3.56 & 0.08 & 4.46 & 64 & 0.22 & 0.05 & 0.77 & 0.02 & 1.60 & 59 & 0.21 & 0.12 & 0.67 & 0.02 & 1.33 \\
4601\_goc & 38814 & 59596 & 71 & 0.06 & 0.22 & 3.08 & 0.06 & 4.11 & 67 & 0.21 & 0.06 & 0.60 & 0.02 & 1.22 & 71 & 0.21 & 0.14 & 0.67 & 0.02 & 1.40 \\
4619\_goc & 42532 & 66289 & 49 & 0.07 & 0.18 & 3.07 & 0.08 & 3.95 & 52 & 0.26 & 0.05 & 0.70 & 0.03 & 1.34 & 49 & 0.25 & 0.10 & 0.54 & 0.02 & 1.16 \\
  \hline
4837\_goc & 41398 & 64030 & 59 & 0.06 & 0.20 & 2.44 & 0.06 & 3.39 & 56 & 0.22 & 0.05 & 0.44 & 0.02 & 1.10 & 59 & 0.22 & 0.11 & 0.57 & 0.02 & 1.21 \\
4917\_goc & 37872 & 56917 & 63 & 0.06 & 0.19 & 1.85 & 0.04 & 2.75 & 69 & 0.20 & 0.06 & 0.79 & 0.02 & 1.51 & 63 & 0.20 & 0.10 & 0.54 & 0.02 & 1.16 \\
5658\_epigrids & 48552 & 74821 & 51 & 0.08 & 0.21 & 2.69 & 0.07 & 3.71 & 48 & 0.27 & 0.05 & 0.55 & 0.03 & 1.23 & 51 & 0.26 & 0.11 & 0.56 & 0.02 & 1.22 \\
7336\_epigrids & 62116 & 95306 & 50 & 0.11 & 0.26 & 3.44 & 0.09 & 4.64 & 54 & 0.36 & 0.06 & 1.12 & 0.04 & 2.08 & 50 & 0.37 & 0.11 & 0.58 & 0.03 & 1.36 \\
8387\_pegase & 78748 & 118702 & 74 & 0.15 & 0.48 & 5.19 & 0.10 & 7.18 & 76 & 0.46 & 0.08 & 0.70 & 0.03 & 2.29 & 75 & 0.43 & 0.22 & 8.41 & 0.13 & 9.53 \\
  \hline
9241\_pegase & 85568 & 130826 & 73 & 0.38 & 0.53 & 5.71 & 0.11 & 8.07 & 107 & 0.50 & 0.12 & 1.18 & 0.03 & 3.42 & 70 & 0.49 & 0.14 & 1.68 & 0.04 & 2.76 \\
9591\_goc & 83572 & 130588 & 67 & 0.41 & 0.49 & 10.11 & 0.18 & 12.37 & 75 & 0.49 & 0.09 & 1.99 & 0.05 & 3.74 & 67 & 0.48 & 0.15 & 1.23 & 0.03 & 2.27 \\
10000\_goc & 76804 & 112352 & 82 & 0.16 & 0.50 & 5.65 & 0.09 & 7.69 & 64 & 0.39 & 0.07 & 1.12 & 0.03 & 2.21 & 82 & 0.35 & 0.19 & 1.06 & 0.03 & 2.09 \\
10192\_epigrids & 89850 & 139456 & 54 & 0.42 & 0.46 & 7.18 & 0.17 & 9.42 & 68 & 0.57 & 0.08 & 1.54 & 0.05 & 3.14 & 54 & 0.51 & 0.14 & 1.08 & 0.04 & 2.09 \\
10480\_goc & 96750 & 150874 & 71 & 0.40 & 0.60 & 11.01 & 0.19 & 13.68 & 66 & 0.60 & 0.08 & 1.57 & 0.05 & 3.51 & 71 & 0.56 & 0.14 & 1.61 & 0.04 & 2.74 \\
  \hline
13659\_pegase & 117370 & 170588 & 63 & 0.23 & 0.63 & 6.65 & 0.15 & 9.15 & 68 & 0.69 & 0.09 & 1.09 & 0.04 & 3.03 & 62 & 0.62 & 0.12 & 1.42 & 0.04 & 2.57 \\
19402\_goc & 179562 & 281733 & 69 & 0.65 & 1.23 & 28.47 & 0.48 & 33.19 & 288 & 1.17 & 0.63 & 21.51 & 0.15 & 43.55 & 69 & 1.08 & 0.17 & 2.52 & 0.06 & 4.23 \\
20758\_epigrids & 179236 & 274918 & 51 & 0.37 & 0.91 & 12.85 & 0.32 & 16.32 & 52 & 1.07 & 0.09 & 1.48 & 0.08 & 4.18 & 51 & 1.00 & 0.14 & 1.76 & 0.06 & 3.26 \\
30000\_goc & 208624 & 307752 & 230 & 0.63 & 4.70 & 84.58 & 0.43 & 99.76 & 139 & 0.98 & 0.26 & 4.50 & 0.06 & 8.46 & 215 & 0.90 & 0.56 & 7.40 & 0.05 & 10.46 \\
78484\_epigrids & 674562 & 1039062 & 102 & 2.54 & 7.83 & 164.55 & 1.87 & 191.10 & 110 & 4.84 & 0.44 & 9.90 & 0.24 & 26.01 & 102 & 4.55 & 0.42 & 9.93 & 0.16 & 16.23 \\
  \hline
\end{tabular}
    }
  }
  \caption{OPF benchmark, solved with a tolerance {\tt
    tol=1e-6}.
The derivatives are evaluated using ExaModels.
The column for {\tt init} displays the time spent initializing the IPM solver, including the initial symbolic factorization in the linear solver.
The columns for {\tt AD}, {\tt lin}, {\tt time/it} and {\tt total} display the time spent in the evaluation of the derivatives using AD, the time spent in the linear solver, the average time spent at each iteration, and the total time spent in the IPM algorithm.
   \label{tab:opf:benchmark}}
\end{table}

\begin{figure}[!ht]
  \centering
  \begin{tabular}{cc}
    \includegraphics[width=.5\textwidth]{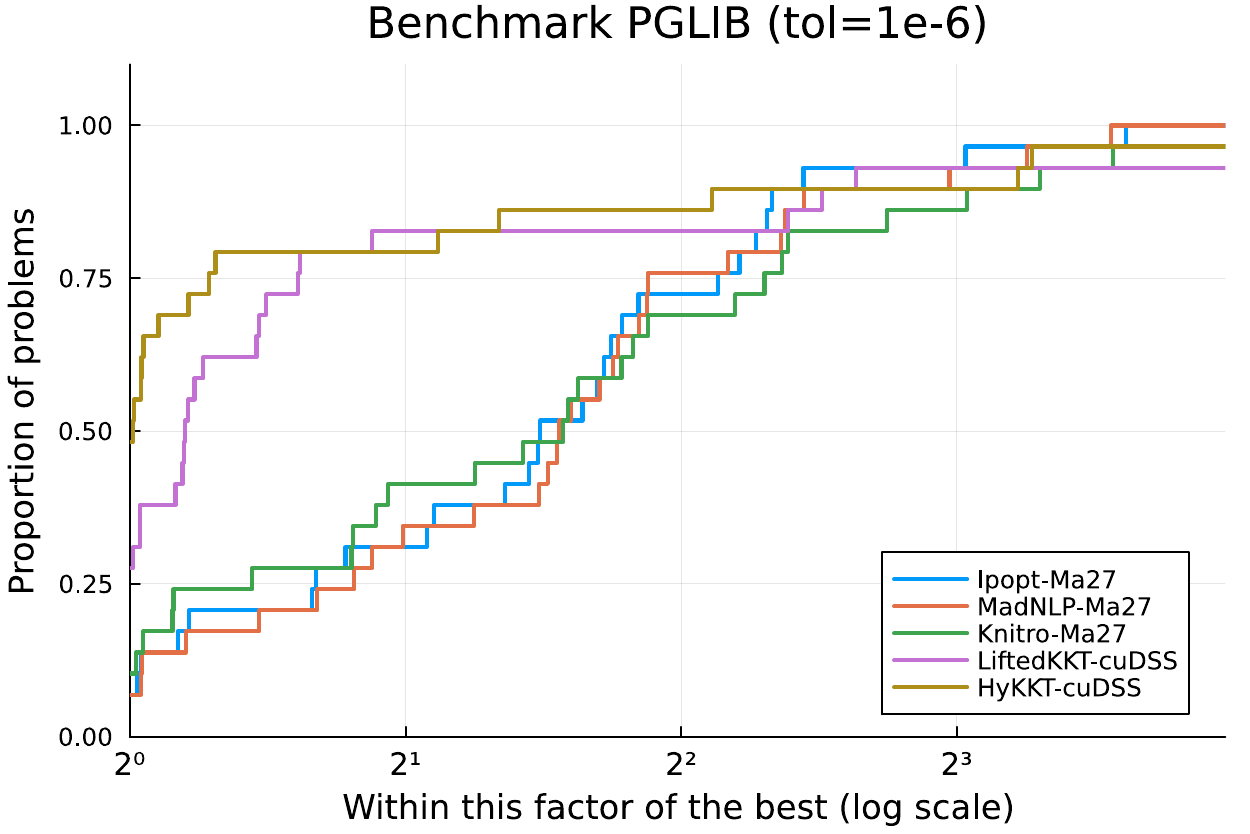} &
    \includegraphics[width=.5\textwidth]{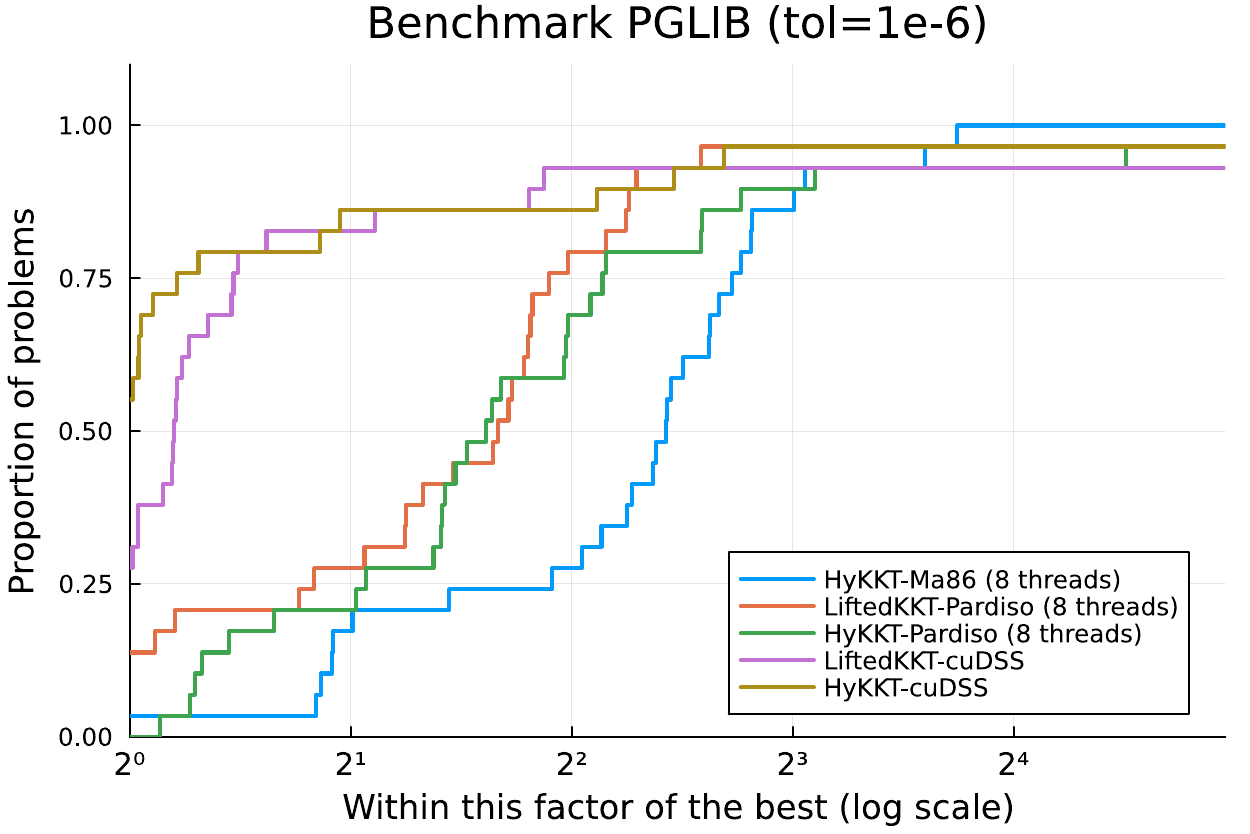}
  \end{tabular}
  \caption{Performance profile \add{for 29 instances of the} PGLIB OPF benchmark, solved
    with a tolerance {\tt tol=1e-6}. Left: comparing LiftedKKT+{\tt cuDSS} and
    HyKKT+{\tt cuDSS} with HSL MA27. Right: comparing LiftedKKT and HyKKT
    with various parallel sparse linear solvers.
  \label{fig:opf:pprof}}
\end{figure}

\begin{figure}[!ht]
  \centering
  \includegraphics[width=.6\textwidth]{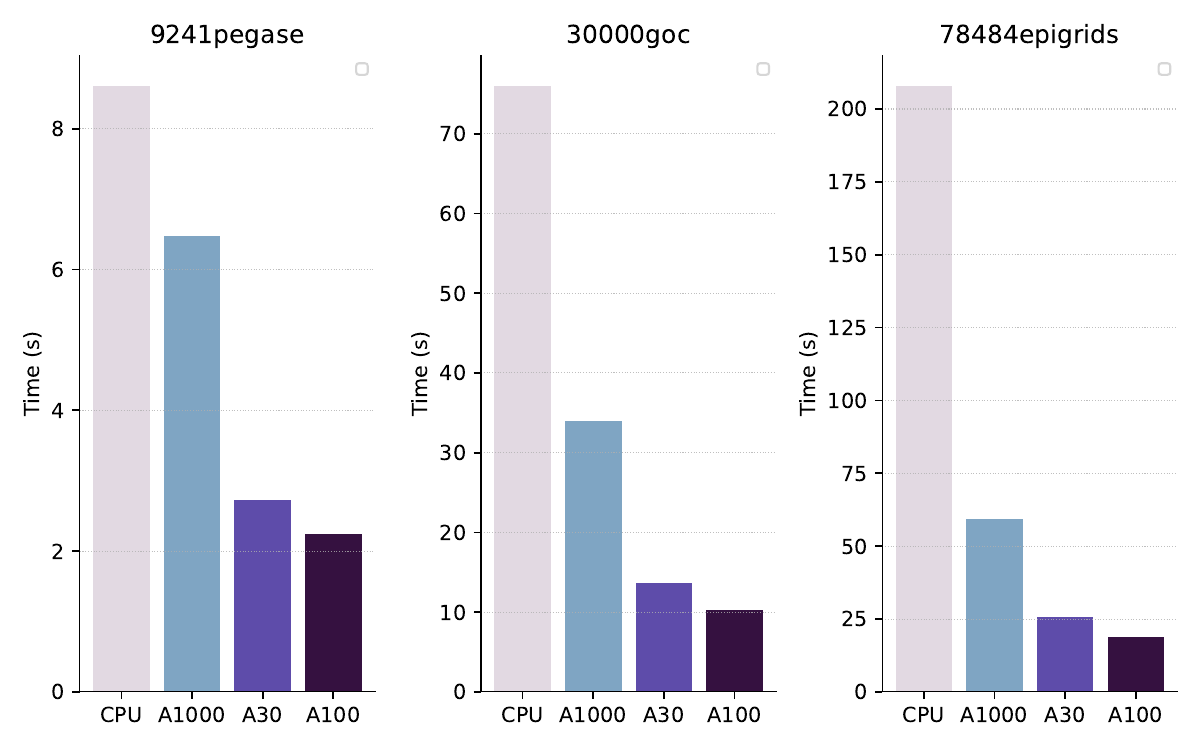}
  \caption{Performance on three OPF instances across different GPUs using HyKKT.
  \label{fig:gpubench}}
\end{figure}

\subsection{Benchmark on CUTEst instances}
\label{sec:num:cutest}
We extend our analysis to large-scale CUTEst instances with more than 1,000 variables and at least one constraint. We use an IPM tolerance of $10^{-6}$ and set $\gamma = 10^7$ for HyKKT. SIF instances are decoded on the CPU using {\tt CUTEst.jl}, \add{and models are evaluated on the CPU entirely. To use CUTEst on GPUs with HyKKT+{\tt cuDSS} and LiftedKKT+{\tt cuDSS}, we have implemented a wrapper that copies sensitivities (gradient, constraints, Jacobian, and Hessian) to the GPU. Consequently, each IPM iteration requires copying four arrays to the GPU, and AD evaluations are slower than on-GPU evaluations.}

Performance profiles comparing LiftedKKT+{\tt cuDSS} and HyKKT+{\tt cuDSS} are
in Figure~\ref{fig:cutest:pprof}. As a baseline, we include the performance
obtained with Ipopt, MadNLP, and Knitro, all running with HSL MA57 (in
contrast to OPF problems, HSL MA57 has better performance than HSL MA27 on CUTEst).
In the performance profile (a), we observe that HyKKT and LiftedKKT are less
competitive than a standard method running on the CPU, despite demonstrating
robustness comparable to Knitro for LiftedKKT. However, if we select only the
largest instances where Ipopt solve time is greater than 1 second
(performance profile (b)), HyKKT and LiftedKKT become more competitive, matching
Knitro performance on 50\% of the instances.

Overall, HyKKT is less robust than LiftedKKT on CUTEst: in some cases, the
regularization parameter $\gamma = 10^7$ is insufficient to ensure that
$K_\gamma$ is positive definite. This is the case for the {\tt
ROCKET} problem: this optimal control instance is known to be degenerate, in the sense that
the reduced Hessian $Z^\top K Z$ is close to being singular (this is induced
by the infamous singular arc in Goddard's problem). In that particular instance, we have observed that
$\gamma$ must be increased to $10^{12}$ to obtain a positive definite matrix $K_\gamma$.

Figure~\ref{fig:cutest:pprof} shows that the two condensed space methods
are less robust than the reference methods running on the CPU.
Table~\ref{tab:cutest:benchmark} presents a
selection of representative instances where condensed-space methods are
either significantly slower (unfavorable cases) or faster (favorable cases) than the references.
Unfavorable cases typically involve: (i) significant {\tt cuDSS} symbolic
factorization time (e.g., {\tt BLOCKQP2}, {\tt ROSEPETAL2}),
(explaining the bump observed at the upper right of the performance profiles in Figure~\ref{fig:cutest:pprof});
(ii) dense or nearly dense columns in the Jacobian ({\tt ROCKET}, {\tt BRAINPC0}, {\tt BLOCKQP2}, {\tt ROSEPETAL2}, {\tt A5NSDSIL}, {\tt A0NNDNIL}, {\tt A0NSDSIL}, {\tt A0NNSNSL}, {\tt MPC9}, {\tt CMPC10}, {\tt CMPC12},
{\tt DALE}, {\tt CBS}), leading to additional fill-in
when factorizing the condensed matrices $K_\gamma$ and $K_\tau$;
(iii) poor IPM convergence for the equality relaxation strategy ({\tt LUKVLE8}, {\tt READING7}).
However, we observe on the favorable cases that if the problem does not suffer from one of the previous pitfalls,
HybridKKT and LiftedKKT become competitive w.r.t. the references, with speed-up
similar to what we observed before on the OPF instances (Table~\ref{tab:opf:benchmark}).
We observe that the condensed methods are efficient to solve problems
with a large number of non-zero coefficients in the Jacobian and in the Hessian
({\tt BDRY2}, {\tt CLEUVEN*})
or problems with a dynamical structure ({\tt MARINE}, {\tt ROBOTARM}).
Interestingly, the condensed-space methods can
accommodate dense condensed matrices $K_\tau$ and $K_\gamma$
if they remain sufficiently small ({\tt EIGENAU}, {\tt EIGENCCO}, {\tt EIGENB2}, {\tt MSQRTB}, where the size of the dense condensed matrix is at most $\approx 2,500 \times 2,500$, compared to more than $10,000 \times 10,000$ in the unfavorable cases).

\begin{figure}[!ht]
  \centering
  \begin{tabular}{cc}
    \includegraphics[width=.5\textwidth]{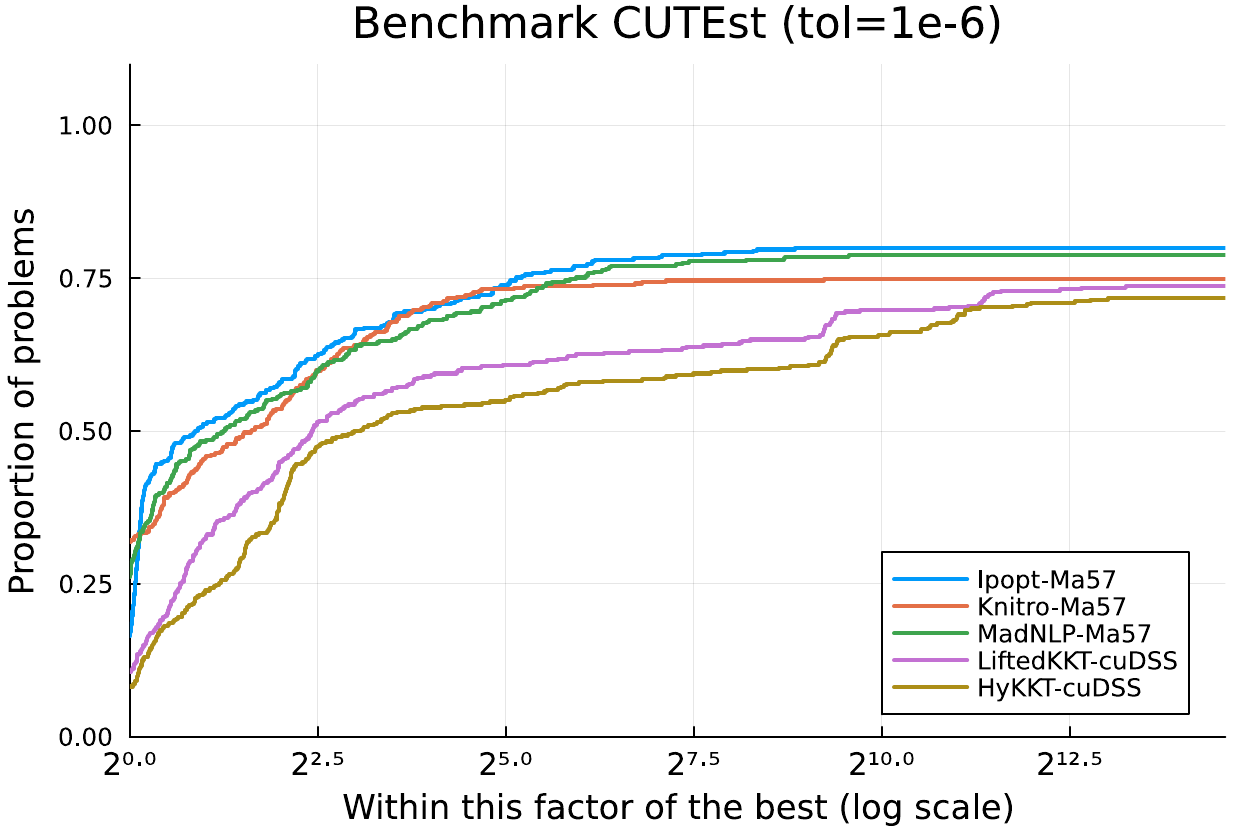} &
    \includegraphics[width=.5\textwidth]{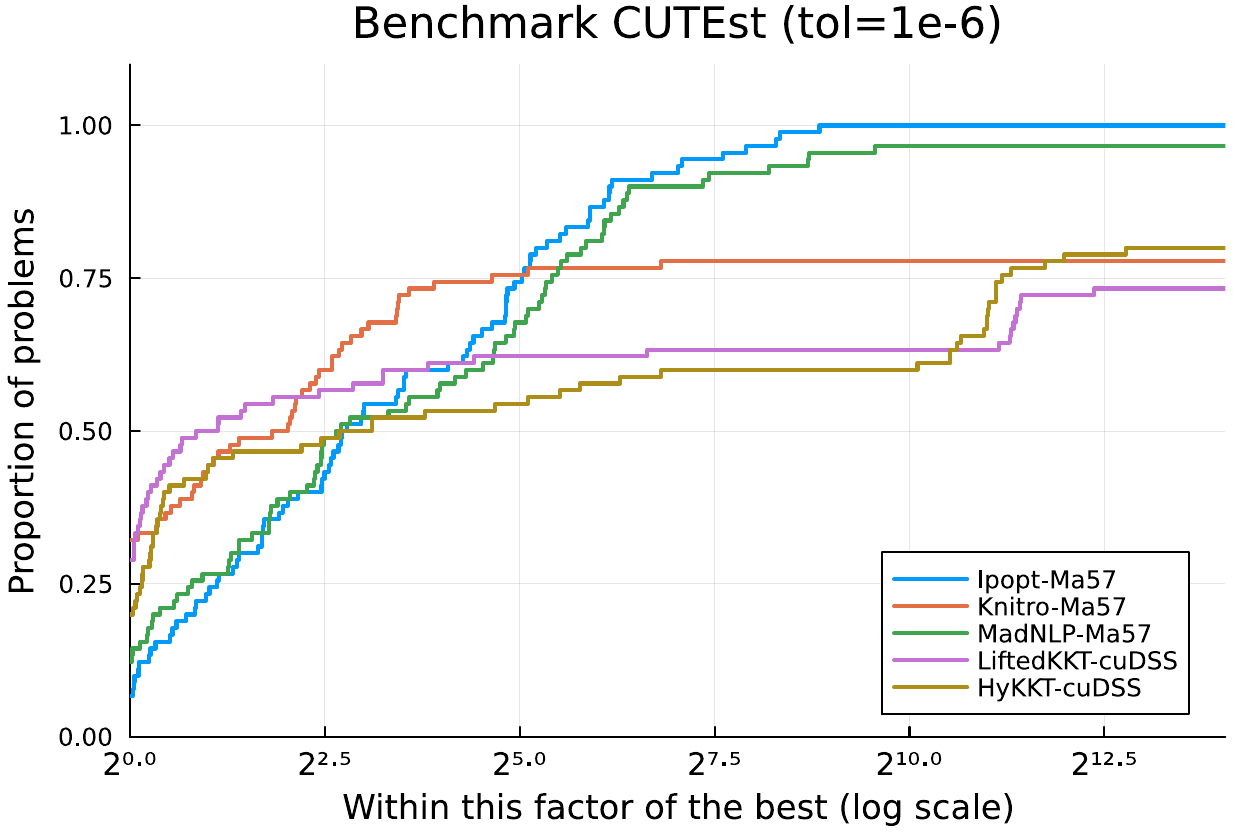}\\
    (a) & (b)
  \end{tabular}
  \caption{
    Performance profiles for 413 CUTEst instances ($>$1,000 variables, $>$1
    constraint). Profile (a) shows the full benchmark; (b) shows only
    instances where Ipopt takes $>$1 s.
    \label{fig:cutest:pprof}}
\end{figure}

\begin{table}[!ht]
  \centering
  \resizebox{\textwidth}{!}{
\begin{tabular}{|l|rr|rrrrr>{\bfseries}r|rrrrr>{\bfseries}r|rrrrr>{\bfseries}r|}
\toprule
  & \multicolumn{2}{c|}{}
  & \multicolumn{6}{c|}{\bf MadNLP+HSL MA57} &
  \multicolumn{6}{c|}{\bf MadNLP+LiftedKKT+cuDSS} &
\multicolumn{6}{c|}{\bf MadNLP+HyKKT+cuDSS} \\
\midrule
  & n & m & it & init & AD & lin & time/it & total & it & init & AD & lin &time/it &  total & it & init & AD & lin &time/it &  total \\
\midrule
 & \multicolumn{20}{c|}{\bf GPU-accelerated solvers are significantly slower} \\
\midrule
A5NSDSIL & 60012 & 20004 & 265.00 & 0.03 & 1.22 & 6.97 & 0.04 & 11.33 & 119.00 & 26.16 & 1.35 & 129.95 & 1.38 & 164.00 & - & - & - & - & - & - \\
A0NNDNIL & 60012 & 20004 & 97.00 & 0.02 & 0.43 & 2.22 & 0.03 & 3.15 & 470.00 & 17.39 & 4.10 & 365.43 & 0.84 & 395.94 & 97.00 & 17.18 & 0.93 & 73.82 & 0.96 & 93.57 \\
A0NSDSIL & 60012 & 20004 & 100.00 & 0.02 & 0.43 & 2.23 & 0.03 & 3.15 & 212.00 & 18.41 & 1.88 & 168.46 & 0.91 & 192.80 & 100.00 & 18.50 & 1.20 & 79.08 & 1.01 & 100.67 \\
A0NNSNSL & 60012 & 20004 & 31.00 & 0.04 & 0.14 & 1.36 & 0.05 & 1.62 & 36.00 & 19.00 & 0.34 & 38.76 & 1.63 & 58.78 & 38.00 & 19.24 & 0.48 & 80.65 & 2.66 & 101.15 \\
CMPC10 & 1530 & 2351 & 70.00 & 0.00 & 0.02 & 0.24 & 0.00 & 0.27 & 82.00 & 0.10 & 0.29 & 1.93 & 0.06 & 4.90 & 70.00 & 0.04 & 0.06 & 0.78 & 0.02 & 1.07 \\
MPC9 & 1530 & 2351 & 70.00 & 0.00 & 0.02 & 0.25 & 0.00 & 0.29 & 79.00 & 0.21 & 0.63 & 3.25 & 0.12 & 9.78 & 69.00 & 0.04 & 0.07 & 0.75 & 0.02 & 1.05 \\
LUKVLE8 & 10000 & 9998 & 12.00 & 0.01 & 0.05 & 0.04 & 0.01 & 0.10 & 144.00 & 0.12 & 1.36 & 2.48 & 0.18 & 25.83 & 12.00 & 0.04 & 0.05 & 13.45 & 1.13 & 13.57 \\
READING7 & 1002 & 500 & 125.00 & 0.01 & 0.14 & 1.28 & 0.01 & 1.49 & 330.00 & 0.65 & 1.45 & 4.75 & 0.06 & 18.18 & 125.00 & 0.55 & 0.35 & 1.25 & 0.02 & 3.09 \\
BRAINPC0 & 6907 & 6900 & 120.00 & 0.01 & 0.27 & 0.78 & 0.01 & 1.11 & 43.00 & 0.22 & 0.47 & 2.83 & 0.18 & 7.65 & - & - & - & - & - & - \\
ORTHRDS2C & 5003 & 2500 & - & - & - & - & - & - & 30.00 & 0.22 & 0.37 & 1.09 & 0.14 & 4.16 & - & - & - & - & - & - \\
ROCKET & 2407 & 2002 & 58.00 & 0.00 & 0.05 & 0.10 & 0.00 & 0.16 & 38.00 & 0.21 & 0.35 & 1.71 & 0.13 & 5.11 & - & - & - & - & - & - \\
HYDROELL & 1009 & 1008 & 235.00 & 0.00 & 0.03 & 0.08 & 0.00 & 0.15 & 235.00 & 0.05 & 0.41 & 0.70 & 0.02 & 4.65 & 235.00 & 0.02 & 0.16 & 0.19 & 0.00 & 0.95 \\
DALE & 16514 & 405 & 14.00 & 0.01 & 0.01 & 0.07 & 0.01 & 0.11 & 16.00 & 2.12 & 0.05 & 30.59 & 2.06 & 33.03 & 14.00 & 1.66 & 0.03 & 16.60 & 1.31 & 18.36 \\
CBS & 11163 & 244 & 20.00 & 0.00 & 0.01 & 0.07 & 0.00 & 0.10 & 22.00 & 0.75 & 0.03 & 7.60 & 0.39 & 8.49 & 35.00 & 0.74 & 0.07 & 8.92 & 0.28 & 9.91 \\
BLOCKQP2 & 10010 & 5001 & 19.00 & 0.01 & 0.02 & 0.09 & 0.01 & 0.13 & 37.00 & 36.27 & 0.08 & 7.37 & 1.21 & 44.62 & 19.00 & 36.30 & 0.05 & 2.75 & 2.08 & 39.50 \\
ROSEPETAL2 & 10001 & 20001 & 29.00 & 0.05 & 0.04 & 0.15 & 0.01 & 0.29 & 47.00 & 36.11 & 0.17 & 7.41 & 0.95 & 44.86 & 29.00 & 36.15 & 0.17 & 3.24 & 1.38 & 40.16 \\
\midrule
 & \multicolumn{20}{c|}{\bf GPU-accelerated solvers are significantly faster} \\
\midrule
STCQP1 & 8193 & 4095 & 13.00 & 0.01 & 0.06 & 13.87 & 1.07 & 13.96 & 15.00 & 0.06 & 0.08 & 0.06 & 0.02 & 0.26 & 161.00 & 0.07 & 1.10 & 1.18 & 0.02 & 2.94 \\
CVXQP3 & 10000 & 7500 & 17.00 & 0.12 & 0.03 & 36.33 & 2.15 & 36.50 & 31.00 & 0.13 & 0.07 & 0.41 & 0.02 & 0.69 & 17.00 & 0.13 & 0.07 & 118.33 & 6.98 & 118.60 \\
CATENA & 3003 & 1000 & 32.00 & 0.00 & 0.02 & 12.72 & 0.40 & 12.74 & 36.00 & 0.06 & 0.10 & 0.33 & 0.03 & 1.13 & - & - & - & - & - & - \\
CLEUVEN3 & 1200 & 2973 & 134.00 & 0.08 & 0.80 & 6.81 & 0.06 & 7.87 & 104.00 & 0.53 & 0.80 & 0.29 & 0.02 & 2.16 & 99.00 & 0.54 & 0.84 & 1.07 & 0.03 & 3.00 \\
NCVXQP7 & 10000 & 7500 & 121.00 & 0.12 & 0.21 & 133.73 & 1.11 & 134.46 & 302.00 & 0.13 & 1.12 & 7.91 & 0.03 & 10.55 & 129.00 & 0.13 & 0.48 & 178.30 & 1.39 & 179.46 \\
BDRY2 & 251001 & 250498 & 45.00 & 2.53 & 1.06 & 158.10 & 3.64 & 163.68 & 157.00 & 1.46 & 4.99 & 28.96 & 0.23 & 36.30 & 50.00 & 1.75 & 1.57 & 10.99 & 0.30 & 14.77 \\
SCW2 & 2003 & 1001 & 251.00 & 0.41 & 25.96 & 66.30 & 0.38 & 95.12 & 97.00 & 0.75 & 10.64 & 1.55 & 0.14 & 13.48 & 259.00 & 0.74 & 28.03 & 4.71 & 0.14 & 35.14 \\
CLEUVEN6 & 1200 & 3091 & 206.00 & 0.08 & 1.26 & 9.45 & 0.05 & 11.05 & 88.00 & 0.55 & 0.74 & 0.27 & 0.02 & 2.04 & - & - & - & - & - & - \\
AUG3D & 27543 & 8000 & 2.00 & 0.16 & 0.01 & 1.45 & 0.81 & 1.61 & 3.00 & 0.22 & 0.01 & 0.04 & 0.10 & 0.29 & 1.00 & 0.22 & 0.01 & 0.03 & 0.26 & 0.26 \\
ROBOTARM & 4412 & 3202 & 451.00 & 0.00 & 0.28 & 1.38 & 0.00 & 1.81 & 36.00 & 0.06 & 0.04 & 0.18 & 0.01 & 0.46 & - & - & - & - & - & - \\
EIGENB2 & 2550 & 1275 & 132.00 & 0.08 & 0.82 & 211.36 & 1.61 & 212.42 & 110.00 & 1.23 & 1.01 & 1.77 & 0.04 & 4.68 & 128.00 & 1.23 & 1.21 & 2.14 & 0.04 & 5.25 \\
CBRATU3D & 3456 & 2000 & 3.00 & 0.03 & 0.00 & 0.29 & 0.11 & 0.32 & 5.00 & 0.06 & 0.01 & 0.03 & 0.03 & 0.13 & 3.00 & 0.07 & 0.01 & 0.03 & 0.04 & 0.12 \\
EIGENC2 & 2652 & 1326 & 16.00 & 0.08 & 0.10 & 26.37 & 1.66 & 26.57 & 17.00 & 1.31 & 0.16 & 0.58 & 0.13 & 2.15 & 15.00 & 1.32 & 0.16 & 0.56 & 0.14 & 2.12 \\
STNQP1 & 8193 & 4095 & 17.00 & 0.01 & 0.07 & 19.59 & 1.16 & 19.69 & 21.00 & 0.06 & 0.12 & 0.13 & 0.02 & 0.38 & 60.00 & 0.06 & 0.40 & 0.48 & 0.02 & 1.24 \\
EIGENCCO & 2652 & 1326 & 42.00 & 0.58 & 8.85 & 51.38 & 1.46 & 61.37 & 35.00 & 1.46 & 7.56 & 0.59 & 0.28 & 9.84 & 42.00 & 1.77 & 9.09 & 0.83 & 0.28 & 11.93 \\
EIGENAU & 2550 & 2550 & 2.00 & 0.05 & 0.03 & 10.95 & 5.52 & 11.05 & 3.00 & 1.49 & 0.05 & 0.02 & 0.53 & 1.60 & 1.00 & 1.50 & 0.03 & 0.01 & 1.57 & 1.57 \\
LUKVLE12 & 9997 & 7497 & 86.00 & 0.00 & 0.18 & 208.54 & 2.43 & 208.77 & 30.00 & 0.05 & 0.09 & 0.09 & 0.01 & 0.41 & 24.00 & 0.04 & 0.07 & 0.08 & 0.01 & 0.30 \\
MSQRTB & 1024 & 1024 & 5.00 & 0.01 & 0.01 & 1.62 & 0.33 & 1.65 & 6.00 & 0.23 & 0.02 & 0.03 & 0.05 & 0.30 & 5.00 & 0.23 & 0.02 & 0.02 & 0.06 & 0.29 \\
DRCAVTY1 & 4489 & 3969 & 8.00 & 0.01 & 0.07 & 1.19 & 0.16 & 1.28 & 8.00 & 0.08 & 0.09 & 0.05 & 0.03 & 0.25 & 7.00 & 0.08 & 0.09 & 0.05 & 0.03 & 0.24 \\
CLEUVEN5 & 1200 & 2973 & 134.00 & 0.07 & 0.81 & 6.80 & 0.06 & 7.85 & 104.00 & 0.53 & 0.82 & 0.29 & 0.02 & 2.18 & 99.00 & 0.53 & 0.77 & 1.11 & 0.03 & 3.04 \\
MARINE & 11215 & 11192 & 85.00 & 0.01 & 0.11 & 8.60 & 0.10 & 8.75 & 25.00 & 0.09 & 0.05 & 0.14 & 0.02 & 0.39 & 31.00 & 0.10 & 0.07 & 0.25 & 0.02 & 0.54 \\
\midrule
\bottomrule
\end{tabular}
  }
  \caption{CUTEst benchmark results ({\tt tol=2e-6}). Sensitivities are
    evaluated with CUTEst. The ``init'' column shows initialization time;
    ``AD'', ``lin'', ``time/it'', and ``total'' columns show AD evaluation,
    linear solver, per-iteration, and total IPM times, respectively.
  \label{tab:cutest:benchmark}}
\end{table}



\section{Conclusion}
This paper investigates the theoretical and numerical properties of two condensed-space interior-point methods for solving large-scale nonlinear programming problems on GPUs.
Despite the emergence of ill-conditioned matrices, our theoretical analysis shows that the resulting loss of accuracy is benign in floating-point arithmetic, owing to the structural properties of the interior-point method.
Numerically, both methods prove to be competitive for solving large-scale nonlinear problems.
The fully GPU-resident software stack we developed, which includes MadNLP, cuDSS, and ExaModels, demonstrates its effectiveness on large-scale optimal power flow (OPF) problems from the PGLIB-OPF library,
with up to a 10× speedup on large-scale OPF problems.
While performance is more variable across the CUTEst benchmark, we have identified the structure impairing the performance of LiftedKKT and HyKKT.
These results highlight both the potential (high performance and scalability) and the limitations (robustness and convergence) of GPU-based interior-point methods for nonlinear programming.
Looking ahead, we plan to improve the robustness of the two condensed KKT methods, with a particular emphasis on achieving stable convergence at tighter tolerances (below $10^{-8}$).
Moreover, the sparse Cholesky solver can be further customized to align with the specific structure and needs of interior-point algorithms~\cite{wright1999modified}.

\section*{Funding}
This research used resources of the Argonne Leadership Computing Facility, a U.S. Department of Energy (DOE) Office of Science user facility at Argonne National Laboratory and is based on research supported by the U.S. DOE Office of Science-Advanced Scientific Computing Research Program, under Contract No. DE-AC02-06CH11357.

\section*{Competing interests}
The authors declare they have no competing interests.

\small

\end{document}